\newtheorem{theorem}{Theorem}[section]
\newtheorem{lemma}[theorem]{Lemma}
\newtheorem{proposition}[theorem]{Proposition}
\theoremstyle{definition}
\newtheorem{definition}[theorem]{Definition}
\theoremstyle{remark}
\newtheorem{example}{Example}
\title{Tutte Polynomial of Ideal Arrangement}
\author{Hery Randriamaro \thanks{Mathematisches Forschungsinstitut Oberwolfach \\ Schwarzwaldstraße 9-11, 77709 Oberwolfach, Germany \\ E-mail: \texttt{hery.randriamaro@outlook.com} \\ This research was supported through the programme "Oberwolfach Leibniz Fellows" by the Mathematisches Forschungsinstitut Oberwolfach in 2017}}
\begin{document}

\maketitle

\begin{abstract}
\noindent The Tutte polynomial is originally a bivariate polynomial enumerating the colorings of a graph and of its dual graph. But it reveals more of the internal structure of the graph like its number of forests, of spanning subgraphs, and of acyclic orientations. In 2007, Ardila extended the notion of Tutte polynomial to the hyperplane arrangements, and computed the Tutte polynomials of the classical root systems for a certain prime power of the first variable. In this article, we compute Tutte polynomials of ideal arrangements. Those arrangements were introduced in 2006 by Sommers and Tymoczko, and are defined for ideals of root systems. For the ideals of the classical root systems, we bring a slight improvement of the finite field method showing that it can applied on any finite field whose cardinality is not a minor of the matrix associated to a hyperplane arrangement. Computing the minor set associated to an ideal of a classical root system permits us particularly to deduce the Tutte polynomials of the classical root systems. For the ideals of the exceptional root systems of type $G_2$, $F_4$, and $E_6$, we use the formula of Crapo.  

\bigskip 

\noindent \textsl{Keywords}: Tutte Polynomial, Hyperplane Arrangement, Root System, Ideal 

\smallskip

\noindent \textsl{MSC Number}: 05A15, 20D06 
\end{abstract}

\section{Introduction}

\noindent In one of his last papers \cite{Tu3}, Tutte described with these words how in 1954 he became acquainted with the later called Tutte polynomial: ``Playing with my W-functions I obtained a two-variable polynomial from which either the chromatic polynomial or the flow polynomial could be obtained by setting one of the variables equal to zero, and adjusting signs." At the beginning, this polynomial was effectively associated to a graph \cite[3. The dichromate of a graph]{Tu}. But in 2007, Ardila extended the notion of Tutte polynomial to hyperplane arrangement \cite[3. Computing the Tutte polynomial]{Ar}.

\smallskip

\noindent Let $(x_1, \dots, x_n)$ be a variable of the $n$--dimensional space $\mathbb{K}^n$ over the field $\mathbb{K}$, and $a_1, \dots, a_n, b$ in $\mathbb{K}$ such that $(a_1, \dots, a_n) \neq (0, \dots, 0)$. A hyperplane in $\mathbb{K}^n$ is a $(n-1)$--dimensional affine subspace $H := \big\{ (x_1, \dots, x_n) \in \mathbb{K}^n \ |\ a_1 x_1 + \dots + a_n x_n = b \big\}$ that we simply denote by $H = \{a_1 x_1 + \dots + a_n x_n = b\}$. A hyperplane arrangement is a finite set of hyperplanes.\\
Let $\mathcal{A}$ be a hyperplane arrangement, and $\cap \mathcal{A} := \bigcap_{H \in \mathcal{A}}H$. One says that $\mathcal{A}$ is central if $\cap \mathcal{A} \neq \emptyset$. \emph{From now on, every hyperplane arrangement we consider is central}.\\
A subarrangement of a hyperplane arrangement $\mathcal{A}$ in $\mathbb{K}^n$ is a subset of $\mathcal{A}$. The rank function $\mathrm{r}$ is defined for each subarrangement $\mathcal{B}$ of $\mathcal{A}$ by $\mathrm{r}(\mathcal{B}) := n - \dim \cap \mathcal{B}$.

\begin{definition}
the Tutte polynomial of the hyperplane arrangement $\mathcal{A}$ is
$$T_{\mathcal{A}}(x,y) := \sum_{\mathcal{B} \subseteq \mathcal{A}} (x-1)^{\mathrm{r}(\mathcal{A})-\mathrm{r}(\mathcal{B})} (y-1)^{\#\mathcal{B}-\mathrm{r}(\mathcal{B})}.$$
\end{definition}

\paragraph{Characteristic Polynomial.} Let $L(\mathcal{A})$ be the set of nonempty intersections of hyperplanes in $\mathcal{A}$. The elements of $L(\mathcal{A})$ are partially ordered by reverse inclusion with unique minimal element $\mathbb{R}^n$. The characteristic polynomial of $\mathcal{A}$ is $\chi_{\mathcal{A}}(q) := \sum_{E \in L_A} \mu(\mathbb{R}^n, E) q^{\dim(E)}$, where $\mu$ denotes the Möbius function of the lattice $L(\mathcal{A})$. The characteristic polynomial gives important information on the associated hyperplane arrangement. The number of chambers of $\mathcal{A}$ is equal to $(-1)^n \chi_{\mathcal{A}}(-1)$. Another example considers a closed chamber $\bar{C}$ of $\mathcal{A}$. One says that a point of $\mathbb{R}^n$ has a $k$-dimensional projection on $\bar{C}$ if its orthogonal projection onto $\bar{C}$ lies in the relative interior of a $k$-dimensional face of $\bar{C}$. The set of the points for which the projections on $\bar{C}$ is $k$-dimensional forms a cone $K_{C,k}$. The ratio of volume $v_k(C)$ occupied by $K_{C,k}$ is defined by
$$v_k(C) := \frac{|K_{C,k} \cap S^{n-1}|}{|S^{n-1}|},$$
where $|\cdot|$ is the Lebesgue measure, and $S^{n-1}$ the unit sphere. Klivans and Swartz proved that the sum of the $v_k(C)$'s over all chambers $C$ of $\mathcal{A}$ is equal to the absolute value of the coefficient of $q^{\mathrm{r}(\mathcal{A})-n+k}$ in $\chi_{\mathcal{A}}(q)$ \cite[Theorem 5]{KlSw}. The characteristic polynomial of $\mathcal{A}$ is a specialization of its Tutte polynomial by
$$\chi_{\mathcal{A}}(q) = (-1)^{\mathrm{r}(\mathcal{A})} q^{n-\mathrm{r}(\mathcal{A})} T_{\mathcal{A}}(1-q,0).$$

\paragraph{Graphic Arrangement.} A finite simple nonoriented graph $G$ consists on the vertex set $[n]$, and on a subset $E$ of $\binom{[n]}{2}$ as edge set. To the graph $G$ is associated a hyperplane arrangement $\mathcal{A}_G$ in $\mathbb{R}^n$ defined by
$$\mathcal{A}_G := \big\{\{x_i - x_j = 0\}\big\}_{\{i,j\} \in E}.$$
The Tutte polynomial $T_{\mathcal{A}_G}(x,y)$ contains many information on the graph $G$. As examples, $T_{\mathcal{A}_G}(2,1)$ counts the number of forests, $T_{\mathcal{A}_G}(1,1)$ the number of spanning forests, and $T_{\mathcal{A}_G}(1,2)$ the number of spanning subgraphs. Moreover, the correspondence $G \leftrightarrow \mathcal{A}_G$ may be used to pull back results concerning arrangements to results concerning graphs. For example, Zaslavsky's chamber counting theorem can be translated into Stanley's theorem which states that the number of acyclic orientations for graphs is $(-1)^n \chi_{\mathcal{A}_G}(-1)$ \cite[Theorem 2.94]{OrTe}.

\bigskip

\noindent \emph{From now on, we work in the Euclidean space $\mathbb{R}^n$ with inner product $(.,.)$ the usual dot product.} Recall that a reflection $s_u$ associated to a nonzero vector $u$ is a linear map sending $u$ to its negative while fixing pointwise the hyperplane $u^{\perp}$. A root system is a finite set $\Phi$ of nonzero vectors $u$ in $\mathbb{R}^n$ satisfying the conditions 
\begin{itemize}
\item $\Phi \cap \mathbb{R}u = \{u, -u\}$ for all $u \in \Phi$,
\item $s_u(\Phi) = \Phi$ for every $u$ in $\Phi$,
\item $s_u(v)$ has integer coefficients for every $u,v$ in $\Phi$.
\end{itemize}
A root system is irreducible if it cannot be expressed as a disjoint union of two nonempty subsets $\Phi_1 \sqcup \Phi_2$ such that $(u_1,u_2) = 0$ for $u_1$ in $\Phi_1$, and $u_2$ in $\Phi_2$.

\noindent Denote by $\{e_1, \dots, e_n\}$ the standard basis of $\mathbb{R}^n$. There are nine types of irreducible root systems: The four infinite families of root systems associated to the classical Lie algebras
\begin{align*}
& (A_{n-1},\, n \geq 2) \quad \Phi_{A_{n-1}} = \{e_i-e_j\ |\ 1 \leq i \neq j \leq n\},\\
& (B_n,\, n \geq 2) \qquad \Phi_{B_n} = \{\pm e_i \pm e_j\ |\ 1 \leq i < j \leq n\} \cup \{\pm e_i\ |\ i \in [n]\},\\
& (C_n,\, n \geq 2) \qquad \Phi_{C_n} = \{\pm e_i \pm e_j\ |\ 1 \leq i < j \leq n\} \cup \{\pm 2e_i\ |\ i \in [n]\},\\
& (D_n,\, n \geq 4) \qquad \Phi_{D_n} = \{\pm e_i \pm e_j\ |\ 1 \leq i < j \leq n\},
\end{align*}
and the five exceptional root systems
\begin{align*}
& (G_2) \quad \Phi_{G_2} = \big\{\pm (e_i-e_j)\ |\ 1 \leq i < j \leq 3\big\} \cup \big\{\pm (2e_i-e_j-e_k)\ |\ \{i, j, k\} = \{1, 2, 3\}\big\},\\
& (F_4) \quad \Phi_{F_4} = \{\pm e_i \pm e_j\ |\ 1 \leq i < j \leq 4\} \cup \{\pm e_i\ |\ i \in [4]\} \cup \big\{\frac{1}{2}(\pm e_1 \pm e_2 \pm e_3 \pm e_4)\big\},\\
& (E_8) \quad \Phi_{E_8} = \{\pm e_i \pm e_j\ |\ 1 \leq i < j \leq 8\} \cup \big\{\frac{1}{2} \sum_{i=1}^8 \pm e_i\ \text{even number of + signs}\big\},\\
& (E_7) \quad \Phi_{E_7} = \{\pm e_i \pm e_j\ |\ 1 \leq i < j \leq 6\} \cup \big\{\pm(e_7 - e_8)\big\}\\
& \qquad \qquad \qquad \cup \big\{\pm \frac{1}{2}\big(e_7 - e_8 + (\sum_{i=1}^6 \pm e_i\ \text{odd number of + signs)\big)} \big\},\\
& (E_6) \quad \Phi_{E_6} = \{\pm e_i \pm e_j\ |\ 1 \leq i < j \leq 5\}\\
& \qquad \qquad \qquad \cup \big\{\pm \frac{1}{2}\big(e_8 - e_7 - e_6 + (\sum_{i=1}^5 \pm e_i\ \text{odd number of + signs})\big)\big\}.
\end{align*}

\noindent A vector of a root system is called a root. There exist some subsets $\Delta$ of $\Phi$ called simple systems such that $\langle \Delta \rangle = \mathbb{R}^n$ and each root in $\Phi$ is a linear combination of roots in $\Delta$ with coefficients all of the same sign. Fixing a simple system $\Delta$, a positive root system $\Phi^+$ consists of the roots with positive coefficients. We endow $\Phi^+$ with the partial order $\preceq$ defined by $u \preceq v$, provided $v-u$ is a linear combination of positive roots with positive coefficients.

\begin{definition}
An ideal of a root system $\Phi$ is a subset $I$ of $\Phi^+$ satisfying the condition
\begin{center}
If $u \in I$, and $v \in \Phi^+$ so that $u \preceq v$, then $v \in I$.
\end{center}
\end{definition}

\noindent Let $I^c := \Phi^+ \setminus I$ be the complement of an ideal $I$. The ideal arrangement $\mathcal{A}_I$ associated to $I$ is the hyperplane arrangement
$$\mathcal{A}_I := \{u^{\perp}\ |\ u \in I^c\}.$$

\paragraph{Poincaré Polynomial of Ideal.} The height of a root $v = \sum_{u \in \Delta} x_u u$ is $\mathrm{ht}(v) := \sum_{u \in \Delta} x_u$. For an ideal $I$, let $\lambda_i := \#\{u \in I^c\ |\ \mathrm{ht}(u)=i\}$. This gives the height partition $\lambda_1 \geq \lambda_2 \geq \dots$ of $I^c$. With $m_i^I := \#\{\lambda_i\ |\ \lambda_i \geq \lambda_1-i+1\}$, define the dual partition $m_{\lambda_1}^I \geq m_{\lambda_1 -1}^I \geq \dots \geq m_1^I$ of the $\lambda_i$'s. The numbers $m_i^I$ are called the ideal exponents of $I$.\\
A subset $S$ of $I^c$ is said $I^c$-closed if for $u, v$ in $S$, if $u+v \in I^c$ then $u + v \in S$. And $S$ is of Weyl type for $I$ if both $S$ and $I^c \setminus S$ are $I^c$-closed. Denote by $\mathrm{W}_I$ the set of subsets of $I^c$ of Weyl type. Sommers and Tymoczko proved that, for any ideal $I$ of the root systems of type $A_{n-1}, B_n, C_n, G_2, F_4, E_6$, its Poincaré polynomial is \cite[Theorem 4.1]{SoTy}
\begin{equation} \label{Eq1}
\sum_{S \in \mathrm{W}_I} t^{\#S} = \sum_{i=1}^{\lambda_1} (1+t+t^2+ \dots+ t^{m_i^I}).
\end{equation}
A parabolic subsystem is a subset $\Phi' \varsubsetneq \Phi$ such that there exists a subset $\Delta' \varsubsetneq \Delta$ with the property $\Phi' = \langle \Delta' \rangle \cap \Phi$. Röhrle showed another condition for the ideal $I$ to satisfy (\ref{Eq1}) \cite[Theorem 1.26, Theorem 1.27]{Ro}: Suppose that the ideal $I$ of the root system $\Phi$ satisfies one of the following conditions
\begin{itemize}
\item[(i)] $\mathcal{A}_I$ is reducible,
\item[(ii)] $\mathcal{A}_I$ is irreducible, and there exists a maximal parabolic subsystem $\Phi_0$ of $\Phi$ such that, with $\Phi_0^c = \Phi \setminus \Phi_0$, $\Phi_0^c \cap I^c \neq \emptyset$, $\Phi_0^c \cap I^c$ is linearly ordered, and for any $u \neq v$ in $\Phi_0^c \cap I^c$, there is $w$ in $\Phi_0^+$ so that $u$, $v$, and $w$ are linealy dependent.
\end{itemize}
Suppose that for every proper parabolic subsystem of $\Phi$, the Poincaré polynomials of all ideals factor as in (\ref{Eq1}). Then the Poincaré polynomial of $I$ also factors as in (\ref{Eq1}). 

\paragraph{Inductive Freeness of Ideal Arrangement.} Denote the polynomial algebra $\mathbb{R}[x_1, \dots, x_n]$ by $S$. A linear map $\theta: S \rightarrow S$ is a derivation if, for $f,g \in S$, $\theta(fg) = f \theta(g) + g \theta(f)$. Denote by $\mathrm{Der}(S)$ the $S$--module of derivations of $S$. $Q_{\mathcal{A}}$ being the defining polynomial of $\mathcal{A}$, the $S$--submodule $\mathrm{D}(\mathcal{A}) := \{\theta \in \mathrm{Der}(S)\ |\ \theta(Q_{\mathcal{A}}) \in Q_{\mathcal{A}} S\}$ of $\mathrm{Der}(S)$ is the module of $\mathcal{A}$--derivations. Recall that $\mathcal{A}$ is said free if $\mathrm{D}(\mathcal{A})$ is a free $S$--module. Sommers and Tymoczko showed that $\mathcal{A}_I$ is free if the root system is associated to $A_{n-1}$, $B_n$, $C_n$, or $G_2$ \cite[Theorem 11.1]{SoTy}.\\
Let $\varnothing_n$ be the empty arrangement of $\mathbb{R}^n$. The class $\mathcal{IF}$ of inductively free arrangements is the smallest class of hyperplane arrangements satisfying
\begin{itemize}
\item[$(1)$] $\varnothing_n \in \mathcal{IF}$ for $n \geq 0$,
\item[$(2)$] if there exists $H \in \mathcal{A}$ such that $\mathcal{A}^H \in \mathcal{IF}$, $\mathcal{A} \setminus \{H\} \in \mathcal{IF}$, and $\exp \mathcal{A}^H \subseteq \exp \mathcal{A} \setminus \{H\}$, then $\mathcal{A} \in \mathcal{IF}$.
\end{itemize}
Hultman proved that the ideal arrangements associated to the root systems of $A_{n-1}$, $B_n$, $C_n$, and $G_2$ are inductively free \cite[Theorem 6.6, Theorem 7.1]{Hul}. Röhrle proved that the ideal arrangements of type $D_n$ are inductively free \cite[Theorem 1.7]{Ro}. He showed as well that if $I$ is an ideal of a root system $\Phi$, and $I$ satisfies one of the following conditions
\begin{itemize}
\item[(i)] $\mathcal{A}_I$ is reducible,
\item[(ii)] $\mathcal{A}_I$ is irreducible, and there exists a maximal parabolic subsystem $\Phi_0$ of $\Phi$ such that $\Phi_0^c \cap I^c \neq \emptyset$, $\Phi_0^c \cap I^c$ is linearly ordered, and for any $u \neq v$ in $\Phi_0^c \cap I^c$, there is $w$ in $\Phi_0^+$ so that $u$, $v$, and $w$ are linealy dependent,
\item[(iii)] $I$ is composed only of the highest root of $\Phi^+$,
\end{itemize}
and each ideal arrangement of a proper parabolic subsystem is inductively free, then $\mathcal{A}_I$ is inductively free with the nonzero exponents given by the ideal exponents $m_i^I$ of $I$ with the possible exception when the root system is of type $E_8$ and $I$ is one of $4545$ ideals \cite[Theorem~1.9, Theorem~1.13, Theorem~1.14, Theorem~1.15]{Ro}.

\bigskip

\noindent \textbf{We compute, for the four infinite families of root systems, and for the exceptional root systems $\mathbf{G_2}$, $\mathbf{F_4}$, and $\mathbf{E_6}$, the Tutte polynomials of their ideal arrangements.}

\noindent For the root systems of types $A_{n-1}$, $B_n$, $C_n$, and $D_n$, we use a simple transformation of the Tutte polynomial, called coboundary polynomial of a hyperplane arrangement.

\begin{definition}
The coboundary polynomial of a hyperplane arrangement $\mathcal{A}$ is
$$\bar{\chi}_{\mathcal{A}}(q,t) := \sum_{\mathcal{B} \subseteq \mathcal{A}} q^{\mathrm{r}(\mathcal{A})-\mathrm{r}(\mathcal{B})} (t-1)^{\#\mathcal{B}}.$$
\end{definition}

\noindent Since $T_{\mathcal{A}}(x,y) = \frac{1}{(y-1)^{\mathrm{r}(\mathcal{A})}}\ \bar{\chi}_{\mathcal{A}}\big((x-1)(y-1), y \big)$, computing the coboundary polynomial of a hyperplane arrangement is equivalent to computing its Tutte polynomial.

\noindent Recall that the hyperplane arrangements generated by the classical root systems are
\begin{itemize}
\item $\mathcal{A}_{A_{n-1}} = \big\{x_i - x_j = 0\big\}_{1 \leq i < j \leq n}$,
\item $\mathcal{A}_{B_n} = \mathcal{A}_{C_n} = \big\{x_i \pm x_j = 0\big\}_{1 \leq i < j \leq n} \cup \big\{x_i = 0\big\}_{i \in [n]}$,
\item and $\mathcal{A}_{D_n} = \big\{x_i \pm x_j = 0\big\}_{1 \leq i < j \leq n}$.
\end{itemize} 
Using the finite field method, Ardila proved that for all powers of a large enough prime $q$,
\begin{align*}
1 + q \sum_{n \in \mathbb{N}^*} \bar{\chi}_{\mathcal{A}_{A_n}}(q,t) \frac{x^n}{n!} &\ =\ \Big( \sum_{n \in \mathbb{N}} t^{\binom{n}{2}} \frac{x^n}{n!} \Big)^q \ & \text{\cite[Theorem 4.1]{Ar}},\\
\sum_{n \in \mathbb{N}} \bar{\chi}_{\mathcal{A}_{B_n}}(q,t) \frac{x^n}{n!} &\ =\ \Big( \sum_{n \in \mathbb{N}} 2^n t^{\binom{n}{2}} \frac{x^n}{n!} \Big)^{\frac{q-1}{2}} \Big( \sum_{n \in \mathbb{N}} t^{n^2} \frac{x^n}{n!} \Big) \ & \text{\cite[Theorem 4.2]{Ar}},\\
\sum_{n \in \mathbb{N}} \bar{\chi}_{\mathcal{A}_{D_n}}(q,t) \frac{x^n}{n!} &\ =\ \Big( \sum_{n \in \mathbb{N}} 2^n t^{\binom{n}{2}} \frac{x^n}{n!} \Big)^{\frac{q-1}{2}} \Big( \sum_{n \in \mathbb{N}} t^{n(n-1)} \frac{x^n}{n!} \Big) \ & \text{\cite[Theorem 4.3]{Ar}}.
\end{align*}

\noindent In Section \ref{SeRed}, we propose a slight improvement of the finite field method, on one side, showing that it can be applied for any $q$ not in the minor set of a vector set. On the other side, we prove that the minor set of the matrix associated to $\Phi_{A_{n-1}}^+$ is $\{0, \pm 1\}$, while the minor set of the matrix associated to $\Phi_{B_n}^+$ is $\{0,\, \pm 2^0,\, \pm 2^1,\, \dots,\, \pm 2^{\lfloor \frac{n}{2} \rfloor}\}$. Those permit us to compute the Tutte polynomials of $\mathcal{A}_{A_{n-1}}$, $\mathcal{A}_{B_n}$, and $\mathcal{A}_{D_n}$ undermentioned.

\noindent For a positive integer $n$, let $\mathrm{p}(n)$ be the $n^{\text{th}}$ prime number with $\mathrm{p}(0)=2$, and define the polynomial 
$$\mathrm{L}_n(x,y,i) := \prod_{\substack{j \in [n+1] \\ j \neq i}} \frac{(x-1)(y-1) - \mathrm{p}(j)}{\big(\mathrm{p}(i) - \mathrm{p}(j)\big) (y-1)^n}.$$
Moreover, let $\mathrm{Par}(n)$ be the set of ordered partitions of $n$.

\begin{theorem} \label{ThCl}
For an integer $n \geq 2$, let $X_m$ be one of the types in $\{A_{n-1}, B_n, D_n\}$. Then, the Tutte polynomial of $\mathcal{A}_{X_m}$ is 
$$T_{\mathcal{A}_{X_m}}(x,y) = \sum_{k=0}^{\#\Phi_{X_m}^+} \sum_{i \in [m+1]} \mathrm{L}_m(x,y,i)\times [t^k]\bar{\chi}_{\mathcal{A}_{X_m}}\big(\mathrm{p}(i),t\big) \times y^k,$$
\begin{align*}
\text{with}\quad & \bar{\chi}_{\mathcal{A}_{A_{n-1}}}\big(\mathrm{p}(i),t\big) = \sum_{(a_1, \dots, a_u) \in \mathrm{Par}(n)} \binom{\mathrm{p}(i)}{u} \binom{n}{a_1, \dots, a_u} \frac{t^{\sum_{k=1}^u \binom{a_k}{2}}}{\mathrm{p}(i)}, \\
& \bar{\chi}_{\mathcal{A}_{B_n}}\big(\mathrm{p}(i),t\big) = \sum_{a=0}^n \sum_{(b_1, \dots, b_u) \in \mathrm{Par}(n-a)} \binom{\frac{\mathrm{p}(i)-1}{2}}{u}  \binom{n-a}{b_1, \dots, b_u} 2^{\sum_{k=1}^u b_k} t^{a^2 + \sum_{k=1}^{u} \binom{b_k}{2}}, \\
& \bar{\chi}_{\mathcal{A}_{D_n}}\big(\mathrm{p}(i),t\big) = \sum_{a=0}^n \sum_{(b_1, \dots, b_u) \in \mathrm{Par}(n-a)} \binom{\frac{\mathrm{p}(i)-1}{2}}{u}  \binom{n-a}{b_1, \dots, b_u} 2^{\sum_{k=1}^u b_k} t^{a(a-1) + \sum_{k=1}^{u} \binom{b_k}{2}}. 
\end{align*}
\end{theorem}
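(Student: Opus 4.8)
The plan is to compute everything through the coboundary polynomial and reduce the problem to polynomial interpolation in~$q$. By the identity $T_{\mathcal{A}}(x,y)=(y-1)^{-\mathrm{r}(\mathcal{A})}\,\bar{\chi}_{\mathcal{A}}\big((x-1)(y-1),\,y\big)$ it suffices to determine $\bar{\chi}_{\mathcal{A}_{X_m}}(q,t)$ as a polynomial in $q$ with coefficients in $\mathbb{Z}[t]$. Directly from the definition, the coefficient of $q^{\mathrm{r}(\mathcal{A})}$ in $\bar{\chi}_{\mathcal{A}}(q,t)$ is $1$ (only the empty subarrangement contributes that power) and no higher power of $q$ occurs, so $\bar{\chi}_{\mathcal{A}_{X_m}}(q,t)$ has degree exactly $\mathrm{r}(\mathcal{A}_{X_m})=m$ in $q$, where $m=n-1$ for $A_{n-1}$ and $m=n$ for $B_n$ and $D_n$. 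Hence it is recovered by Lagrange interpolation from its values at the $m+1$ distinct integer nodes $\mathrm{p}(1)<\dots<\mathrm{p}(m+1)$:
\[
\bar{\chi}_{\mathcal{A}_{X_m}}(q,t)\;=\;\sum_{i\in[m+1]}\bar{\chi}_{\mathcal{A}_{X_m}}\big(\mathrm{p}(i),t\big)\prod_{\substack{j\in[m+1]\\ j\ne i}}\frac{q-\mathrm{p}(j)}{\mathrm{p}(i)-\mathrm{p}(j)}.
\]
Substituting $q=(x-1)(y-1)$ and $t=y$ and then dividing by $(y-1)^m$ turns the $i$-th Lagrange basis polynomial into $\mathrm{L}_m(x,y,i)$ — the $m$ factors of the product absorbing the $m$ powers of $(y-1)^{-1}$ — and turns $\bar{\chi}_{\mathcal{A}_{X_m}}(\mathrm{p}(i),t)$ into $\sum_k\big([t^k]\bar{\chi}_{\mathcal{A}_{X_m}}(\mathrm{p}(i),t)\big)y^k$; so the asserted formula will follow once the evaluations $\bar{\chi}_{\mathcal{A}_{X_m}}(\mathrm{p}(i),t)$ are known.

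To obtain those, I would first check that the nodes are admissible for the improved finite field method of Section~\ref{SeRed}. Since $\mathrm{p}(1),\mathrm{p}(2),\dots$ are the odd primes $3,5,7,\dots$, none of them lies in the minor set $\{0,\pm 1\}$ of $\Phi_{A_{n-1}}^+$; being odd primes they are never powers of $2$, so none lies in the minor set $\{0,\pm 2^0,\pm 2^1,\dots,\pm 2^{\lfloor n/2\rfloor}\}$ of $\Phi_{B_n}^+$; and since the matrix of $\Phi_{D_n}^+$ arises from that of $\Phi_{B_n}^+$ by deleting columns, its minor set is contained in the latter, so the same nodes remain admissible for $D_n$. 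Consequently, for each $q=\mathrm{p}(i)$ the method of Section~\ref{SeRed} gives
\[
q^{\,n-\mathrm{r}(\mathcal{A})}\,\bar{\chi}_{\mathcal{A}}(q,t)\;=\;\sum_{v\in\mathbb{F}_q^n}t^{\,\#\{H\in\mathcal{A}\,:\,v\in H\}},
\]
with $n-\mathrm{r}=1$ for $\mathcal{A}_{A_{n-1}}$ and $n-\mathrm{r}=0$ for $\mathcal{A}_{B_n}$ and $\mathcal{A}_{D_n}$.

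It then remains to evaluate the point count on the right by a direct enumeration. For $\mathcal{A}_{A_{n-1}}$, a vector $v\in\mathbb{F}_q^n$ is nothing but a set partition of $[n]$ together with an injective labelling of its blocks by elements of $\mathbb{F}_q$, and the number of hyperplanes $\{x_i-x_j=0\}$ through $v$ equals $\sum_B\binom{|B|}{2}$; organizing the sum by the composition $(a_1,\dots,a_u)$ of block sizes, and using that $u$ blocks can be labelled in $\binom{q}{u}u!$ ways, gives $\sum_{(a_1,\dots,a_u)\in\mathrm{Par}(n)}\binom{q}{u}\binom{n}{a_1,\dots,a_u}t^{\sum_k\binom{a_k}{2}}$, which after division by $q$ is the first formula. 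For $\mathcal{A}_{B_n}$ and $\mathcal{A}_{D_n}$ — where $q$ is odd, so $\mathbb{F}_q^{\ast}/\{\pm 1\}$ has $\tfrac{q-1}{2}$ classes — one records in addition the set $Z$ of vanishing coordinates of $v$, groups the remaining coordinates according to the value of the pair $\{v_i,-v_i\}$, labels those blocks injectively by the classes of $\mathbb{F}_q^{\ast}/\{\pm 1\}$, and assigns a free sign to each nonzero coordinate; the number of hyperplanes through $v$ is then $\sum_k\binom{b_k}{2}$ (from the nonzero blocks, of sizes $b_1,\dots,b_u$) plus the contribution of $Z$, which is $2\binom{|Z|}{2}+|Z|=|Z|^2$ for $B_n$ — each pair of zero coordinates lying on both $\{x_i-x_j=0\}$ and $\{x_i+x_j=0\}$, and each zero coordinate on $\{x_i=0\}$ — and $2\binom{|Z|}{2}=|Z|(|Z|-1)$ for $D_n$, where no coordinate hyperplanes are present. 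Assembling the three choices — that of $Z$, that of an ordered partition of its complement, and that of the labelling and signs — reproduces the remaining two formulas.

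The step I expect to be the main obstacle is this last enumeration: correctly pinning down the exponent of $t$ attached to the vanishing coordinates (the factor-of-two effect in types $B$ and $D$), and matching the labelling counts $\binom{q}{u}u!$ and $\binom{(q-1)/2}{u}u!$ against the sums over ordered partitions without over- or undercounting. The interpolation argument, the verification that the chosen primes avoid the minor sets, and the substitution back into the Tutte polynomial are all routine given the results recalled in the introduction and in Section~\ref{SeRed}.
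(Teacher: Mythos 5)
Your argument follows essentially the same route as the paper's proof of Theorem~\ref{ThCoCl}: express $T$ through $\bar{\chi}$, note that $\bar{\chi}_{\mathcal{A}_{X_m}}(q,t)$ has degree $m$ in $q$, interpolate at the $m+1$ odd primes $\mathrm{p}(1),\dots,\mathrm{p}(m+1)$ after checking they avoid the minor sets of Theorem~\ref{ThMi}, and evaluate $\bar{\chi}$ at each node via Theorem~\ref{ThFi}; the only difference is that you re-derive Ardila's evaluations by a direct point count, whereas the paper simply cites \cite[Theorems 4.1--4.3]{Ar}. Two small slips are worth tidying: the matrix $\mathsf{M}_{\Phi_{D_n}^+}$ is obtained from $\mathsf{M}_{\Phi_{B_n}^+}$ by deleting \emph{rows}, not columns, and the labelling factor that pairs with a sum over compositions $(a_1,\dots,a_u)\in\mathrm{Par}(n)$ is $\binom{q}{u}$ rather than $\binom{q}{u}u!$ (the ordering of the composition already encodes which of the $u$ chosen values of $\mathbb{F}_q$ goes with which block), though the formula you actually wrote down is the correct one.
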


\noindent In Section \ref{SeYo}, we introduce a topology on the shifted Young diagrams, and recall the shifted Young diagrams associated the classical root systems. The open sets of the later diagrams correspond, in fact, to the ideal arrangements. The definition of a full ideal arrangement is particularly given.

\smallskip

\noindent In Section \ref{SePa}, we define the signature $s_I(i)$ of an integer $i$ in accordance with an ideal $I$, and the partition $A^{(1)}| \dots |A^{(r)}|B^{(1)}| \dots |B^{(s)}$ of $[n]$ in accordance with $I$. We need them to compute the coboundary polynomial of an ideal arrangement using the finite field method. Remark that the signature $s_I$ depends on the ideal $I$.

\smallskip

\noindent In Section \ref{SeCob}, we can finally compute the coboundary polynomial of an ideal arrangement associated to a classical root system. Then, we deduce the Tutte polynomials of the ideal arrangements associated to $\Phi_{A_{n-1}}$, $\Phi_{B_n}$, $\Phi_{C_n}$, and $\Phi_{D_n}$ undermentioned. Remark that, since the Tutte polynomial of an ideal arrangement is equal the product of the Tutte polynomials associated to its connected ideal subarrangements, we just need to consider the full connected ideal arrangements.

\begin{theorem}
Let $\mathcal{A}_I$ be a full connected ideal arrangement of $\Phi_{A_{n-1}}$, with associated partition $A^{(1)}| \dots |A^{(r)}$, and let $R^{(u)} = \big\{v \in \{u+1, \dots, r\}\ |\ s_I(A^{(u)}) \cap s_I(A^{(v)}) \neq \emptyset \big\}$. Then, the Tutte polynomial of $\mathcal{A}_I$ is
\begin{align*}
& T_{\mathcal{A}_I}(x,y) = \sum_{k=0}^{\#I^c} \sum_{i \in [n]} \mathrm{L}_{n-1}(x,y,i)\times [t^k]\bar{\chi}_{\mathcal{A}_I}\big(\mathrm{p}(i),t\big) \times y^k, \\
& \text{with} \ \bar{\chi}_{\mathcal{A}_I}\big(\mathrm{p}(i),t\big) = \sum_{\substack{a_1^{(1)} + \dots + a_{\mathrm{p}(i)}^{(1)}\ =\ \# A^{(1)} \\ \vdots \\ a_1^{(r)} + \dots + a_{\mathrm{p}(i)}^{(r)}\ =\ \# A^{(r)}}} \prod_{u=1}^r \binom{\# A^{(u)}}{a_1^{(u)}, \dots, a_{\mathrm{p}(i)}^{(u)}} \frac{t^{\sum_{s=1}^{\mathrm{p}(i)} \binom{a_s^{(u)}}{2} + a_s^{(u)} \sum_{v \in R^{(u)}} a_s^{(v)}}}{\mathrm{p}(i)}.
\end{align*}
\end{theorem}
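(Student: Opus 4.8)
The plan is to run the improved finite field method of Section~\ref{SeRed} at the primes $\mathrm{p}(1),\dots,\mathrm{p}(n)$, evaluate the resulting point counts combinatorially, and then interpolate in the first variable of the coboundary polynomial.

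First I would record, for a prime $p$ and $x \in \mathbb{F}_p^n$, the quantity $h_I(x) := \#\{u \in I^c \mid x \in u^{\perp}\}$, that is, the number of unordered pairs $\{j,k\} \subseteq [n]$ with $\pm(e_j - e_k) \in I^c$ and $x_j = x_k$. Expanding $t^{h_I(x)} = \prod_{u \in I^c}\bigl(1 + (t-1)[x \in u^{\perp}]\bigr)$ and summing over $x$ yields $\sum_{x \in \mathbb{F}_p^n} t^{h_I(x)} = p^{\,n-\mathrm{r}(\mathcal{A}_I)}\,\bar{\chi}_{\mathcal{A}_I}(p,t)$, valid as soon as $p$ is not a minor of the matrix whose columns are the roots of $I^c$; since every square submatrix of that matrix is a square submatrix of the matrix of $\Phi_{A_{n-1}}^+$, whose minor set is $\{0,\pm 1\}$ by Section~\ref{SeRed}, this holds for every $p = \mathrm{p}(i)$. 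Connectedness of $\mathcal{A}_I$ makes the graph on $[n]$ with an edge $\{j,k\}$ for each root of $I^c$ connected, so $\mathrm{r}(\mathcal{A}_I) = n-1$ and the prefactor is simply $\mathrm{p}(i)$.

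Next I would compute $\sum_{x \in \mathbb{F}_{\mathrm{p}(i)}^n} t^{h_I(x)}$ by stratifying the points $x$, seen as colourings of $[n]$ by $\mathbb{F}_{\mathrm{p}(i)} = \{1,\dots,\mathrm{p}(i)\}$, according to the numbers $a_s^{(u)}$ of indices of the block $A^{(u)}$ coloured $s$, so that $a_1^{(u)}+\dots+a_{\mathrm{p}(i)}^{(u)} = \#A^{(u)}$; the number of colourings of a prescribed type is $\prod_{u=1}^r \binom{\#A^{(u)}}{a_1^{(u)},\dots,a_{\mathrm{p}(i)}^{(u)}}$. The crux is reading $h_I(x)$ off the block data. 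Invoking the description of $I^c$ through the partition $A^{(1)}|\dots|A^{(r)}$ and the signature $s_I$ of Section~\ref{SePa}, each $A^{(u)}$ induces a complete subgraph of the root graph, and for $u < v$ every pair between $A^{(u)}$ and $A^{(v)}$ is a root of $I^c$ when $s_I(A^{(u)}) \cap s_I(A^{(v)}) \neq \emptyset$, i.e.\ when $v \in R^{(u)}$, while no such pair is a root of $I^c$ otherwise. Counting monochromatic pairs inside each block ($\binom{a_s^{(u)}}{2}$ for colour $s$) and between blocks $u$ and $v \in R^{(u)}$ ($a_s^{(u)}a_s^{(v)}$ for colour $s$) gives $h_I(x) = \sum_{u=1}^r\sum_{s=1}^{\mathrm{p}(i)}\bigl(\binom{a_s^{(u)}}{2} + a_s^{(u)}\sum_{v \in R^{(u)}}a_s^{(v)}\bigr)$, which with the previous step is precisely the stated formula for $\bar{\chi}_{\mathcal{A}_I}(\mathrm{p}(i),t)$.

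Finally I would recover the Tutte polynomial. For each fixed $k$ the coefficient $[t^k]\bar{\chi}_{\mathcal{A}_I}(q,t)$ is a polynomial in $q$ of degree at most $\mathrm{r}(\mathcal{A}_I) = n-1$, because in $\sum_{\mathcal{B}} q^{\mathrm{r}(\mathcal{A}_I)-\mathrm{r}(\mathcal{B})}(t-1)^{\#\mathcal{B}}$ the exponent of $q$ never exceeds $n-1$; hence it equals its Lagrange interpolant through the $n$ nodes $\mathrm{p}(1),\dots,\mathrm{p}(n)$, with basis $q \mapsto \prod_{j \in [n],\, j \neq i}\frac{q-\mathrm{p}(j)}{\mathrm{p}(i)-\mathrm{p}(j)}$. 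Substituting $q = (x-1)(y-1)$ and $t = y$ and dividing by $(y-1)^{n-1}$, as prescribed by $T_{\mathcal{A}}(x,y) = (y-1)^{-\mathrm{r}(\mathcal{A})}\,\bar{\chi}_{\mathcal{A}}\bigl((x-1)(y-1),y\bigr)$, turns that Lagrange basis divided by $(y-1)^{n-1}$ into $\mathrm{L}_{n-1}(x,y,i)$ and gives the asserted expression for $T_{\mathcal{A}_I}(x,y)$. I expect the genuinely delicate part to be the middle step, i.e.\ establishing that a full connected ideal arrangement of type $A_{n-1}$ really has this blow-up structure, with blocks the cliques cut out by the partition of Section~\ref{SePa} and inter-block adjacency controlled by overlaps of signatures; the finite field reduction and the interpolation are then routine.
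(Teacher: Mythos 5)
Your proposal is correct and follows essentially the same route as the paper: the finite-field point count justified by the minor-set bound of Section~\ref{SeRed} and Theorem~\ref{ThFi}, the rank computation $\mathrm{r}(\mathcal{A}_I)=n-1$ from connectedness, stratification of colourings of $[n]$ by block type using the partition and signature of Section~\ref{SePa}, and Lagrange interpolation in $q$ followed by the substitution $q=(x-1)(y-1)$, $t=y$. The ``delicate middle step'' you rightly flag --- that each $A^{(u)}$ gives a clique and that inter-block pairs lie in $I^c$ exactly when the signatures overlap --- is exactly the content of Lemma~\ref{LeSA}, which the paper proves from the generating boxes of $\mathcal{A}_I$; supplying that argument would complete your proof.
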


\begin{theorem}
Let $\mathcal{A}_I$ be a full connected ideal arrangement of $\Phi_{B_n}$ or $\Phi_{C_n}$, with associated partition $A^{(1)}| \dots |A^{(r)}|B^{(1)}| \dots |B^{(s)}$, and $R^{(u)} = \big\{l \in \{u+1, \dots, r\}\ |\ s_I(A^{(u)}) \cap s_I(A^{(l)}) \neq \emptyset \big\}$, $R_A^{(v)} = \big\{l \in [r]\ |\ s_I(B^{(v)}) \cap s_I(A^{(l)}) \neq \emptyset \big\}$, $S^{(v)} = \big\{h \in [v-1]\ |\ s_I(B^{(v)}) \cap s_I(B^{(h)}) \neq \emptyset \big\}$, $R_0 = \big\{l \in [r]\ |\ s_I(0) \cap s_I(A^{(l)}) \neq \emptyset \big\}$, and $S_0 = \big\{h \in [s]\ |\ s_I(0) \cap s_I(B^{(h)}) \neq \emptyset \big\}$.\\
Then, the Tutte polynomial of $\mathcal{A}_I$ is
\begin{align*}
& T_{\mathcal{A}_I}(x,y) = \sum_{k=0}^{\#I^c} \sum_{i \in [n+1]} \mathrm{L}_n(x,y,i)\times [t^k]\bar{\chi}_{\mathcal{A}_I}\big(\mathrm{p}(i),t\big) \times y^k \\
& \text{with}\ \bar{\chi}_{\mathcal{A}_I}\big(\mathrm{p}(i),t\big) = \sum_{\substack{a_0^{(1)} + \dots + a_{\mathrm{p}(i)-1}^{(1)} = \# A^{(1)} \\ \vdots \\ a_0^{(r)} + \dots + a_{\mathrm{p}(i)-1}^{(r)} = \# A^{(r)} \\ b_0^{(1)} + \dots + b_{\mathrm{p}(i)-1}^{(1)} = \# B^{(1)} \\ \vdots \\ b_0^{(s)} + \dots + b_{\mathrm{p}(i)-1}^{(s)} = \# B^{(s)}}} \prod_{u=1}^r \binom{\# A^{(u)}}{a_0^{(u)}, \dots, a_{\mathrm{p}(i)-1}^{(u)}} \prod_{v=1}^s \binom{\# B^{(v)}}{b_0^{(v)}, \dots, b_{\mathrm{p}(i)-1}^{(v)}}t^{\mathrm{f}_B(u,v)},
\end{align*}
\begin{align*}
\text{and}\ \mathrm{f}_B(u,v) = & \sum_{q=0}^{\mathrm{p}(i)-1} \Big( \binom{a_q^{(u)}}{2} + a_q^{(u)} \sum_{l \in R^{(u)}} a_q^{(l)} \Big) + 2 b_0^{(v)} \Big( \frac{b_0^{(v)}-1}{2} + \sum_{l \in R_A^{(v)}} a_0^{(l)} + \sum_{h \in S^{(v)}} b_0^{(h)} \Big) \\
& + \sum_{q=1}^{\mathrm{p}(i)-1} b_q^{(v)} \Big( \frac{b_q^{(v)}-1}{2} + \sum_{l \in R_A^{(v)}} \big( a_q^{(l)} + a_{\mathrm{p}(i)-q}^{(l)} \big) + \sum_{h \in S^{(v)}} \big( b_q^{(h)} + b_{\mathrm{p}(i)-q}^{(h)} \big) \Big) \\
& + \sum_{q=1}^{\frac{\mathrm{p}(i)-1}{2}} b_q^{(v)} \times b_{\mathrm{p}(i)-q}^{(v)} + \sum_{l \in R_0} a_0^{(l)} + \sum_{h \in S_0} b_0^{(h)}.
\end{align*}
\end{theorem}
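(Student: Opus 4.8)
The plan is to compute the coboundary polynomial $\bar{\chi}_{\mathcal{A}_I}(q,t)$ of a full connected ideal arrangement $\mathcal{A}_I$ of type $B_n$ or $C_n$ by the finite field method, and then to promote the finite-field evaluation to the full Tutte polynomial via Lagrange interpolation. Since $\mathcal{A}_{B_n}=\mathcal{A}_{C_n}$ as hyperplane arrangements, it suffices to treat one of them. Recall from Section~\ref{SeRed} that the coboundary polynomial satisfies
$$\bar{\chi}_{\mathcal{A}_I}(q,t)=\sum_{x\in\mathbb{F}_q^{\,n}} t^{\#\{u\in I^c\,:\,(u,x)=0\}}$$
for every prime power $q=\mathrm{p}(i)$ not lying in the minor set of the matrix whose rows are the roots of $I^c$; the point computed earlier is that this minor set, for the ambient $B_n$ (hence for any sub-collection $I^c\subseteq\Phi_{B_n}^+$), is contained in $\{0,\pm2^0,\dots,\pm2^{\lfloor n/2\rfloor}\}$, so taking $q$ a prime avoids it. Given the finite-field formula for enough prime values $q=\mathrm{p}(1),\dots,\mathrm{p}(n+1)$, the interpolation polynomials $\mathrm{L}_n(x,y,i)$ reconstruct $\bar{\chi}_{\mathcal{A}_I}$ as a polynomial in $q$ (of degree $\mathrm{r}(\mathcal{A}_I)\le n$), and the identity $T_{\mathcal{A}}(x,y)=\frac{1}{(y-1)^{\mathrm{r}(\mathcal{A})}}\bar{\chi}_{\mathcal{A}}\big((x-1)(y-1),y\big)$ converts this into the stated Tutte formula. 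So the outer shell of the statement is automatic once the inner sum $\bar{\chi}_{\mathcal{A}_I}(\mathrm{p}(i),t)$ is established; the real content is the combinatorial count of that inner sum.

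To count $\bar{\chi}_{\mathcal{A}_I}(q,t)=\sum_{x\in\mathbb{F}_q^n}t^{\#\{u\in I^c:(u,x)=0\}}$, I would stratify $\mathbb{F}_q^n$ by the fiber data of the coordinate vector $x=(x_1,\dots,x_n)$ with respect to the partition $A^{(1)}|\dots|A^{(r)}|B^{(1)}|\dots|B^{(s)}$ of $[n]$ associated to $I$ (introduced in Section~\ref{SePa}). Concretely, a root $u\in I^c$ is one of $e_i-e_j$, $e_i+e_j$, or $e_i$ (the last only in the $B$/$C$ case, up to the $2e_i$ rescaling for $C_n$), and whether $x$ lies on $u^\perp$ depends only on the values $x_i,x_j\in\mathbb{F}_q$. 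The key structural input from the ``signature'' $s_I$ is that for two indices $i,j$ in the various blocks, exactly which of $x_i=x_j$, $x_i=-x_j$, $x_i=0$ produce a root of $I^c$ is governed by whether $s_I(i)\cap s_I(j)\neq\emptyset$ (resp.\ $s_I(i)\cap s_I(0)\neq\emptyset$), and this is precisely encoded by the index sets $R^{(u)},R_A^{(v)},S^{(v)},R_0,S_0$. I would therefore group the coordinates: indices in an $A$-block contribute only ``difference'' roots, while indices in a $B$-block may additionally contribute ``sum'' and ``norm'' roots. For each block I introduce counts $a_q^{(u)}=\#\{i\in A^{(u)}:x_i=q\}$ over $q\in\mathbb{F}_q=\{0,\dots,\mathrm{p}(i)-1\}$, and $b_q^{(v)}$ analogously for $B$-blocks; the multinomial coefficients $\binom{\#A^{(u)}}{a_0^{(u)},\dots}$ and $\binom{\#B^{(v)}}{b_0^{(v)},\dots}$ count the ways to realize a given fiber profile, and summing over all profiles with the correct marginals gives the displayed sum.

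The crux is verifying that the exponent of $t$ for a fixed fiber profile equals $\mathrm{f}_B(u,v)$. This is a bookkeeping computation: I would count, for each type of root in $I^c$, how many land on $x^\perp$. A difference root $e_i-e_j$ within $A^{(u)}$ (or across two $A$-blocks whose signatures meet) vanishes iff $x_i=x_j$, contributing $\sum_q\binom{a_q^{(u)}}{2}$ within a block and $\sum_q a_q^{(u)}a_q^{(l)}$ across, which is the first line of $\mathrm{f}_B$. For a $B$-block $B^{(v)}$, both $e_i-e_j$ (needing $x_i=x_j$) and $e_i+e_j$ (needing $x_i=-x_j$, i.e.\ $x_j$ in the ``negative'' coordinate value $\mathrm{p}(i)-q$) occur, which is why the value $0$ must be handled separately (it is its own negative, giving the factor-$2$ term $2b_0^{(v)}(\ldots)$) and why nonzero values $q$ pair with $\mathrm{p}(i)-q$; the norm roots $e_i$ contribute when $x_i=0$, accounting for the $\sum_{l\in R_0}a_0^{(l)}+\sum_{h\in S_0}b_0^{(h)}$ tail, and the intra-$B$-block sum-root count $\sum_{q=1}^{(\mathrm{p}(i)-1)/2}b_q^{(v)}b_{\mathrm{p}(i)-q}^{(v)}$ counts unordered pairs $\{i,j\}\subseteq B^{(v)}$ with $x_i=-x_j\ne0$. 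I expect the main obstacle to be exactly this case analysis — tracking the self-paired value $0$, the $C_n$ rescaling (which does not change any hyperplane, only the root labels), and the ``fullness'' and ``connectedness'' hypotheses that guarantee the signature combinatorics are governed solely by the listed index sets and that no cross-block interactions are omitted. Once the exponent identity is checked, collecting the multinomial weights over all profiles with the prescribed marginals yields $\bar{\chi}_{\mathcal{A}_I}(\mathrm{p}(i),t)$, and feeding this into the interpolation shell completes the proof.
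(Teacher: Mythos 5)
Your proposal is correct and follows essentially the same route as the paper: use the finite-field method at odd primes $\mathrm{p}(1),\dots,\mathrm{p}(n+1)$ (which avoid the minor set $\{0,\pm 2^0,\dots,\pm 2^{\lfloor n/2\rfloor}\}$ from Theorem~\ref{ThMi}), stratify $\mathbb{F}_q^n$ by the fiber profiles $(a_q^{(u)},b_q^{(v)})$ on the blocks of the partition in accordance with $I$, count the incident hyperplanes block-by-block, and then recover the full coboundary and Tutte polynomials by Lagrange interpolation. The paper packages the block bookkeeping in two explicit lemmas — Lemma~\ref{LeB} (resp.\ Lemma~\ref{LeC}) giving the disjoint decomposition $\mathcal{A}_I=\mathcal{A}_{I_A}\sqcup\mathcal{A}_{I_B}\sqcup\mathcal{A}_{I_0}$ for type $B_n$ (resp.\ $C_n$), and Lemma~\ref{LePr} converting each piece into the fiber-profile counts — whereas you fold the same content into prose; your shortcut ``treat $B_n$ and $C_n$ simultaneously'' is sound because the signatures yield the same decomposition even though the shifted Young diagrams differ, but the paper proves the two decomposition lemmas separately rather than invoking that observation.
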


\begin{theorem}
Let $\mathcal{A}_I$ be a full connected ideal arrangement of $\Phi_{D_n}$, with associated partition $A^{(1)}| \dots |A^{(r)}|B^{(1)}| \dots |B^{(s)}$, and let $R^{(u)} = \big\{l \in \{u+1, \dots, r\}\ |\ s_I(A^{(u)}) \cap s_I(A^{(l)}) \neq \emptyset \big\}$, $R_A^{(v)} = \big\{l \in [r]\ |\ s_I(B^{(v)}) \cap s_I(A^{(l)}) \neq \emptyset \big\}$, and $S^{(v)} = \big\{h \in [v-1]\ |\ s_I(B^{(v)}) \cap s_I(B^{(h)}) \neq \emptyset \big\}$.\\
Then, the Tutte polynomial of $\mathcal{A}_I$ is
\begin{align*}
& T_{\mathcal{A}_I}(x,y) = \sum_{k=0}^{\#I^c} \sum_{i \in [n+1]} \mathrm{L}_n(x,y,i)\times [t^k]\bar{\chi}_{\mathcal{A}_I}\big(\mathrm{p}(i),t\big) \times y^k \\
& \text{with}\ \bar{\chi}_{\mathcal{A}_I}\big(\mathrm{p}(i),t\big) = \sum_{\substack{a_0^{(1)} + \dots + a_{\mathrm{p}(i)-1}^{(1)} = \# A^{(1)} \\ \vdots \\ a_0^{(r)} + \dots + a_{\mathrm{p}(i)-1}^{(r)} = \# A^{(r)} \\ b_0^{(1)} + \dots + b_{\mathrm{p}(i)-1}^{(1)} = \# B^{(1)} \\ \vdots \\ b_0^{(s)} + \dots + b_{\mathrm{p}(i)-1}^{(s)} = \# B^{(s)}}} \prod_{u=1}^r \binom{\# A^{(u)}}{a_0^{(u)}, \dots, a_{\mathrm{p}(i)-1}^{(u)}} \prod_{v=1}^s \binom{\# B^{(v)}}{b_0^{(v)}, \dots, b_{\mathrm{p}(i)-1}^{(v)}}t^{\mathrm{f}_D(u,v)},
\end{align*}
\begin{align*}
\text{and}\ \mathrm{f}_D(u,v) = & \sum_{q=0}^{\mathrm{p}(i)-1} \Big( \binom{a_q^{(u)}}{2} + a_q^{(u)} \sum_{l \in R^{(u)}} a_q^{(l)} \Big) + 2 b_0^{(v)} \Big( \frac{b_0^{(v)}-1}{2} + \sum_{l \in R_A^{(v)}} a_0^{(l)} + \sum_{h \in S^{(v)}} b_0^{(h)} \Big) \\
& + \sum_{q=1}^{\mathrm{p}(i)-1} b_q^{(v)} \Big( \frac{b_q^{(v)}-1}{2} + \sum_{l \in R_A^{(v)}} \big( a_q^{(l)} + a_{\mathrm{p}(i)-q}^{(l)} \big) + \sum_{h \in S^{(v)}} \big( b_q^{(h)} + b_{\mathrm{p}(i)-q}^{(h)} \big) \Big) \\
& + \sum_{q=1}^{\frac{\mathrm{p}(i)-1}{2}} b_q^{(v)} \times b_{\mathrm{p}(i)-q}^{(v)}.
\end{align*}
\end{theorem}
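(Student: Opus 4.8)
The overall route is that of Theorem \ref{ThCl}: replace the Tutte polynomial by the coboundary polynomial, evaluate the latter at a few primes via the finite field method, and interpolate.

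\emph{Step 1: reduction and interpolation.} Concretely, I would start from $T_{\mathcal{A}}(x,y)=(y-1)^{-\mathrm{r}(\mathcal{A})}\,\bar\chi_{\mathcal{A}}\big((x-1)(y-1),y\big)$, which reduces the theorem to a formula for $\bar\chi_{\mathcal{A}_I}(q,t)$. From the defining sum, $\bar\chi_{\mathcal{A}_I}(q,t)$ has degree at most $n$ in $q$ (the unique term of $q$-degree $n$ comes from $\mathcal{B}=\varnothing$), so each coefficient $c_k(q):=[t^k]\bar\chi_{\mathcal{A}_I}(q,t)$ is a polynomial of degree at most $n$, hence is recovered from its values at the $n+1$ distinct nodes $\mathrm{p}(1),\dots,\mathrm{p}(n+1)$ by Lagrange interpolation. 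Performing the substitution $q=(x-1)(y-1)$, dividing by $(y-1)^n$, and absorbing the Lagrange basis polynomials into $\mathrm{L}_n(x,y,i)$ yields the displayed expression for $T_{\mathcal{A}_I}(x,y)$; this is verbatim the reduction used for Theorem \ref{ThCl}, so everything comes down to evaluating $\bar\chi_{\mathcal{A}_I}(\mathrm{p}(i),t)$ for $i\in[n+1]$.

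\emph{Step 2: finite field method.} Since $I^{c}\subseteq\Phi_{D_n}^{+}\subseteq\Phi_{B_n}^{+}$, every minor of the matrix whose rows are the vectors of $I^{c}$ is a minor of the matrix of $\Phi_{B_n}^{+}$, whose minor set is $\{0,\pm2^{0},\dots,\pm2^{\lfloor n/2\rfloor}\}$ by Section \ref{SeRed}, and therefore contains no odd prime. As each $\mathrm{p}(i)$ with $i\in[n+1]$ is odd (the index $0$ with $\mathrm{p}(0)=2$ being excluded from the nodes), the improved finite field method of Section \ref{SeRed} applies to $I^{c}$ at every such prime and gives
$$\bar\chi_{\mathcal{A}_I}\big(\mathrm{p}(i),t\big)=\sum_{p\in\mathbb{F}_{\mathrm{p}(i)}^{\,n}}t^{\,h_I(p)},\qquad h_I(p):=\#\{\,u\in I^{c}\ :\ (u,p)=0\ \text{in}\ \mathbb{F}_{\mathrm{p}(i)}\,\}.$$

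\emph{Step 3: counting by profiles.} Fix $q=\mathrm{p}(i)$ and split $\mathbb{F}_q$ into $\{0\}$ and the $\tfrac{q-1}{2}$ antipodal pairs $\{c,-c\}$. For a positive root of $\Phi_{D_n}$, the condition $(u,p)=0$ reads $p_i=p_j$ when $u=e_i-e_j$ and $p_i=-p_j$ when $u=e_i+e_j$. I would then use the description of $I^{c}$ for a full connected ideal of $\Phi_{D_n}$ provided by Sections \ref{SeYo}--\ref{SePa}: through the partition $A^{(1)}|\cdots|A^{(r)}|B^{(1)}|\cdots|B^{(s)}$ of $[n]$ and the signatures $s_I$, the difference roots of $I^{c}$ are those joining coordinates in a common block, or in $A$-blocks $A^{(u)},A^{(l)}$ with $l\in R^{(u)}$, or in $A^{(l)}$ and $B^{(v)}$ with $l\in R_A^{(v)}$, or in $B^{(h)},B^{(v)}$ with $h\in S^{(v)}$; the sum roots of $I^{c}$ are those joining coordinates in a common $B$-block or in the same $A$-$B$ and $B$-$B$ pairs — the absence of sum roots among the $A$-blocks being the feature separating $D_n$ from $A_{n-1}$. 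A profile records, for each block, how many of its coordinates take each value in $\mathbb{F}_q$ (say $a_c^{(u)}$ coordinates of $A^{(u)}$ and $b_c^{(v)}$ of $B^{(v)}$ take the value $c$); the number of points $p$ with a prescribed profile is the product of multinomials in the statement, and $h_I(p)$ depends only on the profile. Counting the contributing pairs $\{i,j\}$ block by block — the same-value pairs inside and between related $A$-blocks, then the $p_i=p_j$ and $p_i=-p_j$ pairs inside $B^{(v)}$ and between $B^{(v)}$ and its related $A$- and $B$-blocks, the value $0$ being handled separately because $0=-0$ — identifies $h_I(p)$ term by term with the exponent $\mathrm{f}_D(u,v)$ of the statement, and summing $t^{h_I(p)}$ over all profiles produces the asserted formula for $\bar\chi_{\mathcal{A}_I}(\mathrm{p}(i),t)$. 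Together with Step 1 this proves the theorem.

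\emph{Main obstacle.} The computation in Step 3 is formally the one already performed for $\Phi_{B_n}$ and $\Phi_{C_n}$: the only change is that $\Phi_{D_n}^{+}$ has no short roots $e_i$, so the term $\#\{i:p_i=0\}$ disappears from $h_I(p)$, which is exactly why $\mathrm{f}_D$ omits the summands $\sum_{l\in R_0}a_0^{(l)}+\sum_{h\in S_0}b_0^{(h)}$ appearing in $\mathrm{f}_B$. The genuine difficulty therefore lies upstream: establishing that, for a full connected ideal of $\Phi_{D_n}$, the complement $I^{c}$ is controlled by the partition and the signatures exactly as used above (the content of Sections \ref{SeYo}--\ref{SePa}). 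Granting that structural description, only the careful but routine block-by-block pair count remains.
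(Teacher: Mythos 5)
Your proposal follows the paper's proof essentially verbatim: reduce to the coboundary polynomial and Lagrange-interpolate in $q$ (the paper's Theorem \ref{ThCoCl} route), justify the finite field method at odd primes via $\mathrm{Min}(I^c)\subseteq\mathrm{Min}(\Phi_{B_n}^+)$ (the paper's Theorem \ref{ThMi} plus Lemma \ref{LeRe}), decompose $I^c$ via the block structure (the paper's Lemma \ref{LeD}), and count by value-profiles per block (the paper's Lemma \ref{LePr}, parts 1--4). Your remark that the sole difference from $B_n$/$C_n$ is the absence of short roots $e_i$, hence of the $\mathcal{A}_{I_0}$ component and the $R_0,S_0$ summands in the exponent, is exactly the paper's justification for citing Lemma \ref{LeD} in place of Lemmas \ref{LeB}/\ref{LeC} and omitting part (5) of Lemma \ref{LePr}.
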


\noindent In Section \ref{SeEx}, we show how to compute the Tutte polynomial of an ideal arrangement of $\Phi_{G_2}^+$, $\Phi_{F_4}^+$, and $\Phi_{E_6}^+$. For most ideals $I$, one can not directly use the definition of the Tutte polynomial for the computing. Indeed, the algorithm would implement $\binom{\#I^c}{k}$ sets of cardinality $k$, where $k$ varies from $1$ to $\#I^c$, so that the space and time complexity would exceed the capacity of our computer. That is why we use the formula of Crapo \cite[Theorem~2.32]{DePr} which reduces the algorithm implementation on $\binom{\#I^c}{\mathrm{r}(\mathcal{A}_I)}$ sets of cardinality $\mathrm{r}(\mathcal{A}_I)$.

\bigskip

\noindent \emph{The author would like to thank Gerhard Röhrle to have initiated him to ideal arrangements.}

\section{Correct Reduction} \label{SeRed}

\noindent The finite field method reduces the coboundary polynomial computing for certain prime powers to a counting problem. We propose a slight improvement of that method which permits to determine the prime numbers for which it can be used. Then, we compute the minor sets of the matrices associated to the classical root systems. The finite field method is, in fact, valid for prime numbers not included in those minor sets. So, we get the valid prime numbers to use that method and the interpolation formula of Lagrange in order to compute the coboundary polynomial of an ideal arrangement. By the way, we complete the calculations of Ardila to obtain the Tutte polynomials associated to the classical root systems.

\begin{definition}
Two hyperplane arrangements $\mathcal{A}$ and $\mathcal{B}$ are isomorphic if there is an order preserving bijection between the lattices $L(\mathcal{A})$ and $L(\mathcal{B})$.
\end{definition}

\noindent A hyperplane arrangement whose coefficients lie in $\mathbb{Z}$ is called a $\mathbb{Z}$--arrangement. Furthermore, for a prime number $p$, the finite field composed by the integers modulo $p$ is denoted by $\mathbb{F}_p$.

\begin{definition}
Take a $\mathbb{Z}$--arrangement $\mathcal{A}$ in $\mathbb{R}^n$, and a prime number $p$. For a hyperplane $H = \{a_1 x_1 + \dots + a_n x_n = b\}$ in $\mathbb{R}^n$, define the set $\bar{H} = \{\bar{a}_1 \bar{x}_1 + \dots + \bar{a}_n \bar{x}_n = \bar{b}\}$ in $\mathbb{F}_p^n$. One says that $\mathcal{A}$ reduces correctly over $\mathbb{F}_p$ if
\begin{itemize}
\item for every hyperplane $H$ in $\mathcal{A}$, $\bar{H}$ is a hyperplane in $\mathbb{F}_p^n$,
\item and, if we define $\bar{\mathcal{A}} := \{\bar{H}\ |\ H \in \mathcal{A}\}$, $\mathcal{A}$ and $\bar{\mathcal{A}}$ are isomorphic.
\end{itemize}
\end{definition}

\noindent Let $U = \{u_1, \dots, u_m\}$ be a vector set in $\mathbb{R}^n$. Define its associated matrix by $$\mathsf{M}_U := (u_i)_{i \in [m]}\ \text{where}\ u_i\ \text{is the $i^{\text{th}}$ row}.$$

\noindent Denote by $\mathrm{Min}(U)$ the minor set of the matrix $\mathsf{M}_U$.

\begin{lemma} \label{LeRe}
Take a $\mathbb{Z}$--vector set $U$ in $\mathbb{R}^n$, and a prime number $p$. Then, the central $\mathbb{Z}$--arrangement $\mathcal{A} = \{u^{\perp}\}_{u \in U}$ reduces correctly over $\mathbb{F}_p$ if $p \notin \big\{|i|\ |\ i \in \mathrm{Min}(U)\big\}$.
\end{lemma}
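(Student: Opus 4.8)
The plan is to reduce the definition of correct reduction to a statement about preservation of linear ranks under the coordinatewise map $x \mapsto \bar x$, and then to make the hypothesis on $\mathrm{Min}(U)$ do the work through elementary facts about minors. Throughout, for $S \subseteq U$ write $\mathsf{M}_S$ for the submatrix of $\mathsf{M}_U$ on the rows indexed by $S$, and note that every minor of $\mathsf{M}_S$ is in particular a minor of $\mathsf{M}_U$, hence lies in $\mathrm{Min}(U)$.

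First I would record the linear-algebraic description of the intersection lattice. For $S \subseteq U$ one has $\bigcap_{u \in S} u^{\perp} = (\operatorname{span} S)^{\perp}$, so the rank function satisfies $\mathrm{r}\big(\{u^{\perp} : u \in S\}\big) = \dim \operatorname{span} S = \operatorname{rank}_{\mathbb{Q}} \mathsf{M}_S$; the analogous identity holds for $\bar{\mathcal{A}}$ over $\mathbb{F}_p$ once every $\bar u$ is nonzero, so that $\bar{\mathcal{A}}$ is genuinely a hyperplane arrangement. Since $L(\mathcal{A})$, ordered by reverse inclusion, is exactly the lattice of flats determined by this rank function (a flat being a maximal subset of its rank), an order-preserving bijection $L(\mathcal{A}) \cong L(\bar{\mathcal{A}})$ exists precisely when the natural bijection $u^{\perp} \leftrightarrow \bar u^{\perp}$ carries $\mathrm{r}$ to the rank function of $\bar{\mathcal{A}}$. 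Hence it suffices to prove: (i) each $\bar u \neq 0$; and (ii) $\operatorname{rank}_{\mathbb{Q}} \mathsf{M}_S = \operatorname{rank}_{\mathbb{F}_p} \overline{\mathsf{M}_S}$ for every $S \subseteq U$. Statement (ii) also guarantees that $u \mapsto \bar u$ is a bijection from $U$ onto $\bar U$, since two non-proportional vectors $u, u'$ with $\bar u = \bar u'$ would violate (ii) on $S = \{u, u'\}$.

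For the rank step, reduction modulo $p$ never increases rank: if $r = \operatorname{rank}_{\mathbb{Q}} \mathsf{M}_S$ then every $(r+1)\times(r+1)$ minor of $\mathsf{M}_S$ vanishes, hence so does its reduction, giving $\operatorname{rank}_{\mathbb{F}_p} \overline{\mathsf{M}_S} \le r$. For the reverse inequality, choose a nonzero $r\times r$ minor $m$ of $\mathsf{M}_S$; then $m \in \mathrm{Min}(U)$, and the hypothesis that $p \notin \{\,|i| : i \in \mathrm{Min}(U)\,\}$ is used here to ensure that $p$ is a divisor of no nonzero element of $\mathrm{Min}(U)$ — which is exactly the content of the hypothesis in the cases at hand, for instance $\mathrm{Min}(U) = \{0, \pm 1\}$ for $\Phi_{A_{n-1}}^{+}$ and $\mathrm{Min}(U) = \{0, \pm 2^{0}, \dots, \pm 2^{\lfloor n/2\rfloor}\}$ for $\Phi_{B_n}^{+}$ — so $\bar m \neq 0$ in $\mathbb{F}_p$ and $\operatorname{rank}_{\mathbb{F}_p} \overline{\mathsf{M}_S} = r$. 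This proves (ii). Specializing to $S = \{u\}$ gives (i): $u \neq 0$ means some entry of $u$, a nonzero $1\times1$ minor of $\mathsf{M}_U$, survives modulo $p$, so $\bar u \neq 0$.

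I expect the genuinely delicate point to be the arithmetic guarantee that a witnessing nonzero minor $m$ of $\mathsf{M}_S$ does not vanish modulo $p$: translating correct reduction into equality of rank functions, and equality of rank functions into the minor conditions, is routine bookkeeping, but it is the passage from the hypothesis on $\mathrm{Min}(U)$ to $p \nmid m$ that carries the weight and must be checked carefully against the explicit minor sets established later for the classical root systems. A secondary, purely formal, subtlety is verifying that the induced map between the two flat lattices is a bijection that preserves order in both directions; as indicated this is a consequence of the rank identities, but it should be spelled out.
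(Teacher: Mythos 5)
Your proposal takes essentially the same route as the paper's own (very terse) proof: translate \emph{correct reduction} into preservation of the rank function $\mathcal{B}\mapsto \mathrm{r}(\mathcal{B})$ under reduction modulo $p$, observe that ranks are controlled by minors, and conclude the order-preserving bijection between $L(\mathcal{A})$ and $L(\bar{\mathcal{A}})$. The paper compresses this to one sentence (``since $p\notin\{|i|:i\in\mathrm{Min}(U)\}$, for every $\mathcal{B}\subseteq\mathcal{A}$, $\mathrm{r}(\mathcal{B})=\mathrm{r}(\bar{\mathcal{B}})$'') and then lists the three consequences; you spell out the intermediate bookkeeping, which is a genuine improvement in clarity.

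The crucial point you flagged --- the passage from $p\notin\{|i|:i\in\mathrm{Min}(U)\}$ to $p\nmid m$ for a witnessing nonzero minor $m$ --- is indeed the weight-bearing step, and you are right to be suspicious of it: \emph{the implication fails for general $U$}. The hypothesis in the lemma only excludes $p$ itself from the minor set, while what the rank argument actually needs is that $p$ divide no nonzero minor. These conditions are genuinely different. For instance, with $U=\{(1,3),(3,1)\}\subset\mathbb{R}^2$ one has $\mathrm{Min}(U)=\{0,\pm1,\pm3,\pm8\}$ and $p=2\notin\{0,1,3,8\}$, but $\det\overline{\mathsf{M}_U}=\bar{8}=\bar{0}$ in $\mathbb{F}_2$, so the rank drops and the reduction is not correct. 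Thus the lemma as stated is false in general, and the paper's proof silently commits the same gap your proposal makes explicit. What rescues the paper is, as you note, that in every case where the lemma is actually invoked, $\mathrm{Min}(U)$ is $\{0,\pm1\}$ or $\{0,\pm2^0,\dots,\pm2^{\lfloor n/2\rfloor}\}$, and these sets have the special property that any prime avoiding the set of absolute values automatically divides none of the nonzero elements. So the downstream theorems are unaffected, but Lemma~\ref{LeRe} should really have as hypothesis ``$p$ does not divide any nonzero element of $\mathrm{Min}(U)$'', and your proof already proves that corrected statement. You should state the counterexample and the corrected hypothesis rather than leaving the gap implicit in a parenthetical remark about ``the cases at hand''.
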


\begin{proof}
Since $p \notin \big\{|i|\ |\ i \in \mathrm{Min}(U)\big\}$, for every subset $\mathcal{B}$ of $\mathcal{A}$, $\mathrm{r}(\mathcal{B}) = \mathrm{r}(\bar{\mathcal{B}})$. That implies that
\begin{itemize}
\item $\bar{H}$ is a hyperplane in $\mathbb{F}_p^n$ for every $H$ in $\mathcal{A}$,
\item $\forall \mathcal{B},\mathcal{B}' \subseteq \mathcal{A}:\, \cap \mathcal{B} \neq \cap \mathcal{B}' \Rightarrow \cap \bar{\mathcal{B}} \neq \cap \bar{\mathcal{B}}'$,
\item $\forall \mathcal{B},\mathcal{B}' \subseteq \mathcal{A}:\, \cap \mathcal{B} \subseteq \cap \mathcal{B}' \Rightarrow \cap \bar{\mathcal{B}} \subseteq \cap \bar{\mathcal{B}}'$.
\end{itemize}
So, the function from $L(\mathcal{A})$ to $L(\bar{\mathcal{A}})$, mapping $\cap \mathcal{B}$ to $\cap \bar{\mathcal{B}}$, is an order preserving bijection. 
\end{proof}

\noindent For a hyperplane arrangement $\bar{\mathcal{A}}$, and a vector $\bar{x}$ in $\mathbb{F}_p^n$, define the hyperplane arrangement
$$\bar{\mathcal{A}}(\bar{x}) := \{\bar{H} \in \bar{\mathcal{A}}\ |\ \bar{x} \in \bar{H}\}.$$
To compute the coboundary polynomial, we use the finite field method based on this theorem.

\begin{theorem} \label{ThFi}
Consider a $\mathbb{Z}$--vector set $U$ in $\mathbb{R}^n$, and its associated central arrangement $\mathcal{A} = \{u^{\perp}\}_{u \in U}$. Let $p$ be a prime number in $\mathbb{N} \setminus \mathrm{Min}(U)$, and $\bar{\mathcal{A}}$ the reduction of $\mathcal{A}$ over $\mathbb{F}_p$. Then, $$p^{n-\mathrm{r}(\mathcal{A})} \bar{\chi}_{\mathcal{A}}(p,t) = \sum_{\bar{x} \in \mathbb{F}_p^n} t^{\#\bar{\mathcal{A}}(\bar{x})}.$$
\end{theorem}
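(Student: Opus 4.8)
The plan is to pass from $\mathcal{A}$ to its reduction $\bar{\mathcal{A}}$ and then run the classical ``binomial expansion plus point counting'' argument over $\mathbb{F}_p$. First I would invoke Lemma~\ref{LeRe}: a prime equals its own absolute value, so $p\notin\mathrm{Min}(U)$ places us in the hypothesis of that lemma and $\mathcal{A}$ reduces correctly over $\mathbb{F}_p$. Reading the proof of Lemma~\ref{LeRe}, what we actually obtain is the sharper statement that $\mathrm{r}(\mathcal{B})=\mathrm{r}(\bar{\mathcal{B}})$ for \emph{every} subarrangement $\mathcal{B}\subseteq\mathcal{A}$; in particular the hyperplanes of $\mathcal{A}$ remain pairwise distinct after reduction, so $H\mapsto\bar H$ is a bijection $\mathcal{A}\to\bar{\mathcal{A}}$ under which $\#\mathcal{B}=\#\bar{\mathcal{B}}$ and $\mathrm{r}(\mathcal{B})=\mathrm{r}(\bar{\mathcal{B}})$ for all $\mathcal{B}$. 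Since the coboundary polynomial $\bar\chi_{\mathcal{A}}(q,t)=\sum_{\mathcal{B}\subseteq\mathcal{A}}q^{\mathrm{r}(\mathcal{A})-\mathrm{r}(\mathcal{B})}(t-1)^{\#\mathcal{B}}$ depends only on these data, this yields $\bar\chi_{\mathcal{A}}(p,t)=\bar\chi_{\bar{\mathcal{A}}}(p,t)$ and $\mathrm{r}(\mathcal{A})=\mathrm{r}(\bar{\mathcal{A}})$. Hence it suffices to prove $p^{\,n-\mathrm{r}(\bar{\mathcal{A}})}\,\bar\chi_{\bar{\mathcal{A}}}(p,t)=\sum_{\bar x\in\mathbb{F}_p^n}t^{\#\bar{\mathcal{A}}(\bar x)}$.

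Next I would expand the exponent appearing on the right. Fix $\bar x\in\mathbb{F}_p^n$; a subset $\mathcal{B}\subseteq\bar{\mathcal{A}}$ satisfies $\bar x\in\cap\bar{\mathcal{B}}$ exactly when $\mathcal{B}\subseteq\bar{\mathcal{A}}(\bar x)$, and $\bar{\mathcal{A}}(\bar x)$ has $\binom{k}{j}$ subsets of size $j$ where $k:=\#\bar{\mathcal{A}}(\bar x)$, so
\[
t^{\#\bar{\mathcal{A}}(\bar x)}=\big((t-1)+1\big)^{k}=\sum_{j\ge 0}\binom{k}{j}(t-1)^{j}=\sum_{\substack{\mathcal{B}\subseteq\bar{\mathcal{A}}\\ \bar x\in\cap\bar{\mathcal{B}}}}(t-1)^{\#\mathcal{B}},
\]
with the convention $\cap\emptyset=\mathbb{F}_p^n$ so that $\mathcal{B}=\emptyset$ always contributes. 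Summing over $\bar x$ and exchanging the two finite sums gives
\[
\sum_{\bar x\in\mathbb{F}_p^n}t^{\#\bar{\mathcal{A}}(\bar x)}=\sum_{\mathcal{B}\subseteq\bar{\mathcal{A}}}(t-1)^{\#\mathcal{B}}\,\#\{\bar x\in\mathbb{F}_p^n\ |\ \bar x\in\cap\bar{\mathcal{B}}\}=\sum_{\mathcal{B}\subseteq\bar{\mathcal{A}}}(t-1)^{\#\mathcal{B}}\,|\cap\bar{\mathcal{B}}|.
\]

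Finally I would count $|\cap\bar{\mathcal{B}}|$. Each hyperplane of $\bar{\mathcal{A}}$ is linear, being the reduction of some $u^{\perp}$, so $\cap\bar{\mathcal{B}}$ is a linear subspace of $\mathbb{F}_p^n$; by the first paragraph its dimension is $n-\mathrm{r}(\bar{\mathcal{B}})$, and a $d$-dimensional $\mathbb{F}_p$-subspace has exactly $p^{d}$ points, hence $|\cap\bar{\mathcal{B}}|=p^{\,n-\mathrm{r}(\bar{\mathcal{B}})}$ (for $\mathcal{B}=\emptyset$ this is $p^{n}$, consistent with rank $0$). Substituting and pulling out $p^{\,n-\mathrm{r}(\bar{\mathcal{A}})}$,
\[
\sum_{\bar x\in\mathbb{F}_p^n}t^{\#\bar{\mathcal{A}}(\bar x)}=p^{\,n-\mathrm{r}(\bar{\mathcal{A}})}\sum_{\mathcal{B}\subseteq\bar{\mathcal{A}}}p^{\,\mathrm{r}(\bar{\mathcal{A}})-\mathrm{r}(\bar{\mathcal{B}})}(t-1)^{\#\mathcal{B}}=p^{\,n-\mathrm{r}(\bar{\mathcal{A}})}\,\bar\chi_{\bar{\mathcal{A}}}(p,t),
\]
and translating back through the first paragraph completes the argument. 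The only delicate point is that first paragraph: one needs correct reduction to preserve the rank of \emph{every} subarrangement, not merely of $\mathcal{A}$ itself, which is exactly what the condition $p\notin\mathrm{Min}(U)$ buys through Lemma~\ref{LeRe}; the remaining steps are the routine binomial identity and the count of points of a linear subspace over $\mathbb{F}_p$.
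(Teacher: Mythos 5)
Your proof is correct and uses essentially the same argument as the paper: pass to $\bar{\mathcal{A}}$ using rank preservation from Lemma~\ref{LeRe}, expand $t^{\#\bar{\mathcal{A}}(\bar x)}$ by the binomial theorem (the paper's (R2)), swap the two finite sums, and count points of a linear $\mathbb{F}_p$-subspace (the paper's (R1)). The only difference is cosmetic: you run the chain of equalities from the right-hand side to the left, where the paper goes left to right; you also spell out more explicitly that the rank must be preserved for \emph{every} subarrangement $\mathcal{B}\subseteq\mathcal{A}$, a point the paper's proof uses silently when it rewrites the sum over $\mathcal{B}\subseteq\mathcal{A}$ as a sum over $\bar{\mathcal{B}}\subseteq\bar{\mathcal{A}}$.
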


\begin{proof}
Remark that
\begin{itemize}
\item[(R1)] if $\bar{V}$ is a $m$-dimensional subspace of $\mathbb{F}_p^n$, then $\#\bar{V} = p^m$,
\item[(R2)] and, for a strictly positive integer $m$, we have $\sum_{I \subseteq [m]}t^{|I|} = (t+1)^m$.
\end{itemize}
Then,
\begin{align*}
p^{n-\mathrm{r}(\mathcal{A})} \bar{\chi}_{\mathcal{A}}(p,t) & = \sum_{\mathcal{B} \subseteq \mathcal{A}} p^{n-\mathrm{r}(\mathcal{B})} (t-1)^{\#\mathcal{B}} \\
& = \sum_{\mathcal{B} \subseteq \mathcal{A}} p^{\dim \cap \mathcal{B}} (t-1)^{\#\mathcal{B}}
= \sum_{\bar{\mathcal{B}} \subseteq \bar{\mathcal{A}}} p^{\dim \cap \bar{\mathcal{B}}} (t-1)^{\#\bar{\mathcal{B}}} \\
& = \sum_{\bar{\mathcal{B}} \subseteq \bar{\mathcal{A}}} \#\cap \bar{\mathcal{B}}\ (t-1)^{\#\bar{\mathcal{B}}} \quad \text{(R1)} \\
& = \sum_{\bar{\mathcal{B}} \subseteq \bar{\mathcal{A}}} \sum_{\bar{x} \in \cap \bar{\mathcal{B}}} (t-1)^{\#\bar{\mathcal{B}}}
= \sum_{\bar{x} \in \mathbb{F}_p^n} \sum_{\bar{\mathcal{B}} \subseteq \bar{\mathcal{A}}(\bar{x})} (t-1)^{\#\bar{\mathcal{B}}} \\
& = \sum_{\bar{x} \in \mathbb{F}_p^n} t^{\#\bar{\mathcal{A}}(\bar{x})} \quad \text{(R2)}.
\end{align*}
\end{proof}

\noindent Recall that
$$\Phi_{A_{n-1}}^+ = \{e_i-e_j\ |\ 1 \leq i < j \leq n\}\quad \text{and}\quad \Phi_{B_n}^+ = \{e_i \pm e_j\ |\ 1 \leq i < j \leq n\} \cup \{e_i\ |\ i \in [n]\}.$$
Now, we compute the minor sets of the matrices associated to $\Phi_{A_{n-1}}^+$, and $\Phi_{B_n}^+$.

\noindent Denote by $[\pm n]$ the set $\{-n, \dots, -1, 1, \dots, n\}$. For $i \in [n-1]$, and $r \in [\pm n]$ with $i < |r|$, define the vector $$d(i,r) := e_i + \mathrm{sgn}(r)\,e_{|r|},\ \text{where $\mathrm{sgn}$ is the signum function}.$$
Denote by $\mathscr{D}_n$ the set of square matrices $\mathsf{M} = (u_l)_{l \in [n]}$ of order $n$ such that $u_l = d(i_l,r_l)$.

\smallskip

\noindent Take a matrix $\mathsf{M} = (u_l)_{l \in [n]}$ of $\mathscr{D}_n$ with $\det \mathsf{M} \neq 0$. This determinant condition implies that there is at most two rows $u_k = d(i_k,r_k)$ and $u_m = d(i_m,r_m)$ such that $i_k = i_m$ and $|r_k| = |r_m|$. 

\paragraph*{Algorithm D1.} Suppose first that $\mathsf{M}$ does not contain two such rows. We transform $\mathsf{M}$ into the matrix $(1) \oplus \mathsf{M}'$, where $\mathsf{M}' \in \mathscr{D}_{n-1}$.\\
\textbf{D1-1.} We begin with $\mathsf{M} = \big(d(i_l,r_l)\big)_{l \in [n]}$.\\
\textbf{D1-2.} Denote by $\mathsf{T}_{i,j}$ the elementary matrix which switches the $i^{\text{th}}$ row with the $j^{\text{th}}$ row. Let $k = \min \{l \in [n]\ |\ i_l = 1\}$.
$$\text{If}\ k \neq 1,\ \text{Then set}\ \mathsf{M} \leftarrow \mathsf{T}_{1,k} \cdot \mathsf{M}.$$
\textbf{D1-3.} Denote by $\mathsf{R}_{i,j}(m)$ the elementary matrix which adds the $i^{\text{th}}$ row multiplied by a scalar $m$ to the $j^{\text{th}}$ row. Set
$$\mathsf{M} \leftarrow \prod_{\substack{2 \leq l \leq n \\ i_l = 1}} \mathsf{R}_{1,l}(-1) \cdot \mathsf{M}.$$
At this step, if $\mathsf{M} = (u_l)_{l \in [n]}$, we have
\begin{itemize}
\item $\forall l \in [n]:\ u_l = d(i_l,r_l)\ \text{or}\ u_l = - d(i_l,r_l)$,
\item if $2 \leq l$, then $i_l \geq 2$.
\end{itemize}
\textbf{D1-4.} Denote by $\mathsf{D}_i(m)$ the elementary matrix which multiplies all elements on the $i^{\text{th}}$ row by a nonzero scalar $m$. Set
$$\mathsf{M} \leftarrow \prod_{\substack{2 \leq l \leq n \\ u_l = - d(i_l,r_l)}} \mathsf{D}_l(-1) \cdot \mathsf{M}.$$
\textbf{D1-5.} Denote by $\mathsf{C}_{i,j}(m)$ the elementary matrix which adds the $i^{\text{th}}$ column multiplied by a scalar $m$ to the $j^{\text{th}}$ column.
$$\text{If}\ \mathrm{sgn}(r_1) = -1\quad \text{Then set}\ \mathsf{M} \leftarrow \mathsf{M} \cdot \mathsf{C}_{1,|r_1|}(1)\quad \text{Else set}\ \mathsf{M} \leftarrow \mathsf{M} \cdot \mathsf{C}_{1,|r_1|}(-1).$$
\textbf{D1-6.} Return $\mathsf{M}$.

\begin{example} Applying Algorithm D1 on $\begin{pmatrix}
0 & 1 & -1 & 0 & 0 \\ 1 & 0 & -1 & 0 & 0 \\ 1 & 0 & 0 & 1 & 0 \\ 0 & 0 & 1 & 0 & 1 \\ 1 & 0 & 0 & 0 & -1
\end{pmatrix}$, we obtain $\begin{pmatrix}
1 & 0 & 0 & 0 & 0 \\ 0 & 1 & -1 & 0 & 0 \\ 0 & 0 & 1 & 1 & 0 \\ 0 & 0 & 1 & 0 & 1 \\ 0 & 0 & 1 & 0 & -1
\end{pmatrix}$.
\end{example}

\paragraph*{Algorithm D2.} Suppose now that $\mathsf{M}$ contains two such rows with $r_k < 0$. We transform $\mathsf{M}$ into a matrix $\mathsf{M}' = (u_l')_{l \in [n]}$ of $\mathscr{D}_n$, with $u_1' = d(1,-2)$ and $u_2' = d(1,2)$.\\
\textbf{D2-1.} We begin with $\mathsf{M} = \big(d(i_l,r_l)\big)_{l \in [n]}$.\\
\textbf{D2-2.} If $m \neq 1$ $\quad$ Then set $\mathsf{M} \leftarrow \mathbf{T}_{1,k} \cdot \mathsf{M}$ and set $\mathsf{M} \leftarrow \mathsf{T}_{2,m} \cdot \mathsf{M}$ $\quad$ Else,\\
If $k \neq 2$ $\quad$ Then set $\mathsf{M} \leftarrow \mathsf{T}_{2,m} \cdot \mathsf{M}$ and set $\mathsf{M} \leftarrow \mathsf{T}_{1,k} \cdot \mathsf{M}$ $\quad$ Else set $\mathsf{M} \leftarrow \mathsf{T}_{1,2} \cdot \mathsf{M}$.\\
\textbf{D2-3.} Denote by $\mathsf{L}_{i,j}$ the elementary matrix which switches the $i^{\text{th}}$ column with the $j^{\text{th}}$ column. Set
$$\mathsf{M} \leftarrow \mathsf{M} \cdot \mathsf{L}_{1,i_k} \cdot \mathsf{L}_{2,r_m}.$$
\textbf{D2-4.} Set $$\mathsf{M} \leftarrow \prod_{\substack{3 \leq l \leq n \\ u_l = - d(i_l,r_l)}} \mathsf{D}_l(-1) \cdot \mathsf{M}.$$
\textbf{D2-5.} Return $\mathsf{M}$.

\begin{example} Applying Algorithm D2 on $\begin{pmatrix}
1 & 0 & 0 & 1 & 0 \\ 0 & 0 & 1 & -1 & 0 \\ 0 & 1 & 0 & 1 & 0 \\ 0 & 1 & 0 & 0 & -1 \\ 0 & 1 & 0 & -1 & 0
\end{pmatrix}$, we obtain $\begin{pmatrix}
1 & -1 & 0 & 0 & 0 \\ 1 & 1 & 0 & 0 & 0 \\ 0 & 1 & -1 & 0 & 0 \\ 1 & 0 & 0 & 0 & -1 \\ 0 & 1 & 0 & 1 & 0
\end{pmatrix}$.
\end{example}

\begin{lemma} \label{LeDn}
Let $\mathsf{M}$ be a square submatrix of order $m$ of a matrix in $\mathscr{D}_n$. Then,
\begin{itemize}
\item whether $|\det \mathsf{M}| \in \{0, 1\}$,
\item or there exist an integer $m' \in [m]$, and a matrix $\mathsf{M}' \in \mathscr{D}_{m'}$ such that $|\det \mathsf{M}| =  |\det \mathsf{M}'|$.
\end{itemize}
\end{lemma}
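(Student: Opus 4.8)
The plan is to induct on the order $m$ of the submatrix, exploiting the single structural feature of a matrix in $\mathscr{D}_n$: each of its rows $d(i_l,r_l) = e_{i_l} + \mathrm{sgn}(r_l)\,e_{|r_l|}$ has exactly two nonzero entries, both in $\{\pm 1\}$, namely a $+1$ at position $i_l$ and a $\pm 1$ at the strictly larger position $|r_l|$. In particular every entry of any submatrix of a matrix in $\mathscr{D}_n$ lies in $\{0,\pm 1\}$, which settles the base case $m=1$ (a $1\times 1$ submatrix is $(0)$, $(1)$, or $(-1)$).

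For the inductive step, fix the row index set $R$ and the column index set $C$, with $\#R=\#C=m$, that cut out the submatrix $\mathsf{M}$, and distinguish three cases according to how each selected row meets $C$. First, if some row $l\in R$ has $\{i_l,|r_l|\}\cap C=\emptyset$, then the corresponding row of $\mathsf{M}$ is identically zero, so $|\det\mathsf{M}|=0$. Second, if some row $l$ has exactly one of $i_l,|r_l|$ in $C$, say at column $c$ with entry $\pm 1$, then Laplace expansion along that row gives $|\det\mathsf{M}| = |\det\mathsf{M}''|$, where $\mathsf{M}''$ is the order-$(m-1)$ submatrix of the same ambient matrix in $\mathscr{D}_n$ obtained by deleting row $l$ and column $c$; applying the induction hypothesis to $\mathsf{M}''$ finishes this case (either $|\det\mathsf{M}''|\in\{0,1\}$, or $|\det\mathsf{M}''|=|\det\mathsf{M}'|$ with $m'\in[m-1]\subseteq[m]$ and $\mathsf{M}'\in\mathscr{D}_{m'}$).

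The remaining case is that every selected row $l\in R$ has both $i_l$ and $|r_l|$ in $C$. Here I would relabel the elements of $C$ by the unique order-preserving bijection $\sigma\colon C\to[m]$. Since $i_l<|r_l|$, we have $\sigma(i_l)<\sigma(|r_l|)\le m$, so the $l$-th row of $\mathsf{M}$, read through $\sigma$, is exactly $d\big(\sigma(i_l),\,\mathrm{sgn}(r_l)\,\sigma(|r_l|)\big)$ with $\sigma(i_l)\in[m-1]$. After relabelling the rows by an order-preserving bijection $R\to[m]$ as well, $\mathsf{M}$ becomes, up to a permutation of its rows and a permutation of its columns, an element $\mathsf{M}'\in\mathscr{D}_m$; since such permutations only change the sign of the determinant, $|\det\mathsf{M}|=|\det\mathsf{M}'|$ with $m'=m\in[m]$.

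The only step that needs genuine care — and the point I expect to be the main (and fairly modest) obstacle — is verifying in this last case that the relabelled rows really do satisfy the constraints defining $\mathscr{D}_m$: that the $+1$ still occupies the smaller of the two surviving column positions (which holds because $\sigma$ is order-preserving and the original $+1$ sits at $i_l<|r_l|$) and that the index bounds $\sigma(i_l)\le m-1<\sigma(|r_l|)\le m$ are met; together with the bookkeeping observation that in the second case $\mathsf{M}''$ is again a square submatrix of a matrix in $\mathscr{D}_n$, so that the induction on $m$ is legitimate.
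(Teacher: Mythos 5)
Your proof is correct and takes essentially the same approach as the paper's: repeatedly Laplace-expand along any row that meets the chosen column set in a single nonzero entry, and observe that once every selected row meets the column set in both of its nonzero positions, the submatrix is, after order-preserving relabeling of the row and column indices, an element of $\mathscr{D}_m$. You package this as an explicit induction on $m$ (with a stated base case) and spell out the relabeling step more carefully than the paper's iterative phrasing, but the underlying argument is the same.
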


\begin{proof}
Suppose that $\det \mathsf{M} \neq 0$. If $\mathsf{M} \in \mathscr{D}_m$, then we are obviously done. Otherwise, there is an integer $i$ of $[m]$ such that the $i^{\text{th}}$ row of $\mathsf{M}$ has entries $0$ everywhere except in the $j^{\text{th}}$ position, where it is $-1$ or $1$. Denoting by $\mathsf{M}^{(i,j)}$ the submatrix of $\mathsf{M}$ obtained by deleting the $i^{\text{th}}$ row and the $j^{\text{th}}$ column of $\mathsf{M}$, we obtain $|\det \mathsf{M}| = |\det \mathsf{M}^{(i,j)}|$.\\
Setting $\mathsf{M} \leftarrow \mathsf{M}^{(i,j)}$, and repeating the same process as long as necessary, whether we end up at $|\det \mathsf{M}| = 1$ at the end, or we come to a nonnegative integer $m' < m$ and a square submatrix $\mathsf{M}'$ of $\mathsf{M}$ such that $\mathsf{M}' \in \mathscr{D}_{m'}$ and $|\det \mathsf{M}| = |\det \mathsf{M}'|$.
\end{proof}

\begin{proposition} \label{PrDn}
Let $\mathsf{M}$ be a square submatrix of a matrix in $\mathscr{D}_n$. Then,
$$|\det \mathsf{M}| \in \{0, 2^0, 2^1, \dots, 2^{\lfloor \frac{n}{2} \rfloor}\}.$$
\end{proposition}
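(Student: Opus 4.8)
The plan is to combine Lemma~\ref{LeDn} with a direct bound on the determinant of the matrices in $\mathscr{D}_{m'}$, obtained by running Algorithms D1 and D2 repeatedly. By Lemma~\ref{LeDn}, for any square submatrix $\mathsf{M}$ of a matrix in $\mathscr{D}_n$ we have either $|\det \mathsf{M}| \in \{0,1\}$ (which is already in the claimed set), or $|\det \mathsf{M}| = |\det \mathsf{M}'|$ for some $\mathsf{M}' \in \mathscr{D}_{m'}$ with $m' \le n$. So it suffices to show that for every $\mathsf{M}' \in \mathscr{D}_{m'}$ with nonzero determinant, $|\det \mathsf{M}'| \in \{2^0, 2^1, \dots, 2^{\lfloor m'/2 \rfloor}\} \subseteq \{2^0,\dots,2^{\lfloor n/2 \rfloor}\}$.

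First I would set up the induction on $m'$. The base cases $m' = 0, 1$ are trivial (the determinant is $1$). For the inductive step, take $\mathsf{M}' = (d(i_l, r_l))_{l \in [m']} \in \mathscr{D}_{m'}$ with $\det \mathsf{M}' \neq 0$. As observed in the excerpt, the nonvanishing of the determinant forces that there are at most two rows $u_k = d(i_k, r_k)$, $u_m = d(i_m, r_m)$ with $i_k = i_m$ and $|r_k| = |r_m|$ (and if there are two, necessarily $\mathrm{sgn}(r_k) \neq \mathrm{sgn}(r_m)$, i.e.\ one is $d(i,r)$ and the other $d(i,-r)$). Split into the two cases handled by the algorithms. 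In the case where no such pair exists, apply Algorithm D1: it performs only row swaps, unimodular row additions $\mathsf{R}_{1,l}(-1)$, sign changes $\mathsf{D}_l(-1)$, and a unimodular column operation $\mathsf{C}_{1,|r_1|}(\pm 1)$, none of which change $|\det|$; the output is $(1)\oplus \mathsf{M}''$ with $\mathsf{M}'' \in \mathscr{D}_{m'-1}$, so $|\det\mathsf{M}'| = |\det\mathsf{M}''|$ and the claim follows from the inductive hypothesis (with $\lfloor (m'-1)/2\rfloor \le \lfloor m'/2\rfloor$). In the case where such a pair exists, apply Algorithm D2 to bring $\mathsf{M}'$ (up to sign and again using only determinant-preserving operations up to sign) to a matrix $\mathsf{M}''$ of $\mathscr{D}_{m'}$ whose first two rows are $d(1,-2)$ and $d(1,2)$, i.e.\ $(1,-1,0,\dots)$ and $(1,1,0,\dots)$. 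Here I would expand: add row~1 to row~2 (unimodular), then the second column has a $2$ in the second-row position and $0$ elsewhere in rows $1,2,\dots$; expanding the determinant along that column peels off a factor of exactly $2$, and the remaining $(m'-2)\times(m'-2)$ minor is — after deleting row~2 and the second column — again (up to the harmless first row $(1,0,\dots,0)$ which can be expanded away) a matrix in $\mathscr{D}_{m'-2}$. Thus $|\det \mathsf{M}'| = 2\,|\det \mathsf{M}'''|$ with $\mathsf{M}''' \in \mathscr{D}_{m'-2}$, and by induction $|\det\mathsf{M}'''| \in \{2^0,\dots,2^{\lfloor (m'-2)/2\rfloor}\}$, so $|\det\mathsf{M}'| \in \{2^1,\dots,2^{\lfloor m'/2\rfloor}\}$, as desired.

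The main obstacle, and the part requiring the most care, is verifying that the recursive step genuinely lands back in $\mathscr{D}_{m''}$ for the appropriate $m''$ — that is, checking that after the algorithms and the cofactor expansions, the reduced matrix is again of the form $(d(i_l, r_l))_l$ with strictly increasing-index first coordinates in the allowed range, and in particular that deleting a column indexed $|r_1|$ (in D1) or the columns touched in D2 does not destroy this structure. One must also double-check that in the two-row case, after the column permutations $\mathsf{L}_{1,i_k}$, $\mathsf{L}_{2,r_m}$ of step D2-3, the non-special rows $u_l = \pm d(i_l,r_l)$ for $l \ge 3$ still have their support in columns of index $\ge 3$ so that the block decomposition works; this is where the constraint $i < |r|$ in the definition of $d(i,r)$, together with the determinant hypothesis, has to be used. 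I expect these verifications to be essentially bookkeeping, already largely carried out in the statements and examples accompanying Algorithms D1 and D2, so the proof is a matter of assembling the induction cleanly rather than overcoming a conceptual difficulty.
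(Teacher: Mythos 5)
Your high-level plan is the same as the paper's (reduce via Algorithm D1 or D2, peel off factors of $1$ or $2$, and recurse via Lemma~\ref{LeDn}), but there is a genuine gap in your treatment of the D2 case, and it is not just ``bookkeeping.''

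First, the mechanism you describe for extracting the factor $2$ is wrong. After D2 the first two rows are $(1,-1,0,\dots)$ and $(1,1,0,\dots)$, but the rows $l \ge 3$ can still have nonzero entries in columns $1$ and $2$ --- the paper's own worked example for Algorithm~D2 produces an output whose third, fourth, and fifth rows have support meeting columns $\{1,2\}$. So after adding (or subtracting) one of the first two rows to the other, the relevant column is \emph{not} zero outside rows $1,2$, and the single-column Laplace expansion you describe does not isolate a $2$. The correct device is the block-triangular determinant: since rows $1,2$ have support in columns $\{1,2\}$ only, the matrix has the form $\left(\begin{smallmatrix} A & 0 \\ C & D\end{smallmatrix}\right)$ with $A = \left(\begin{smallmatrix} 1 & -1 \\ 1 & 1 \end{smallmatrix}\right)$, whence $|\det| = 2\,|\det D|$, regardless of what sits in the bottom-left block $C$.

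Second --- and this is the structural issue --- the resulting $(m'-2)\times(m'-2)$ block $D$ is \emph{not} in general an element of $\mathscr{D}_{m'-2}$; it is only a square submatrix of the (full) output of D2, which lies in $\mathscr{D}_{m'}$. Again the example shows this: deleting rows $1,2$ and columns $1,2$ from the output leaves rows with a single nonzero entry (since some $i_l$ or $|r_l|$ was in $\{1,2\}$), which is not of the form $d(i,r)$. So your induction, as set up purely over matrices \emph{in} $\mathscr{D}_{m'}$, does not close at the D2 step. The fix is exactly what the paper does: state the induction over arbitrary square submatrices of matrices in $\mathscr{D}_n$, use Lemma~\ref{LeDn} to pass from a square submatrix to a genuine element of some $\mathscr{D}_{m''}$ with $m'' \le n$ (or finish with $|\det| \in \{0,1\}$), apply D1/D2 to that element, and then, after D2, re-enter the induction with the smaller square submatrix $D$. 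The observation that $\lfloor (m''-1)/2\rfloor, 1+\lfloor (m''-2)/2 \rfloor \le \lfloor m''/2 \rfloor \le \lfloor n/2 \rfloor$ closes the argument. With those two corrections your proof matches the paper's.
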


\begin{proof}
Suppose that $\mathsf{M} \in \mathscr{D}_n$ with $\det \mathsf{M} \neq 0$. This condition infers that there is at most two rows $u_k = d(i_k,r_k)$ and $u_m = d(i_m,r_m)$ such that $i_k = i_m$ and $|r_k| = |r_m|$.
\begin{itemize}
\item If $\mathsf{M}$ does not contain two such rows, Algorithm D1 permits us to deduce that there exists a matrix $\mathsf{M}' \in \mathscr{D}_{n-1}$ such that $|\det \mathsf{M}| = |\det \mathsf{M}'|$.
\item Otherwise, Algorithm D2 infers that there is a square submatrix $\mathsf{M}'$ of order $n-2$ of a matrix in $\mathscr{D}_n$ such that $|\det \mathsf{M}| = \begin{vmatrix} 1 & -1 \\ 1 & 1 \end{vmatrix} |\det \mathsf{M}'| = 2 |\det \mathsf{M}'|$.
\end{itemize}
Suppose now that $\mathsf{M}$ is a square submatrix of a matrix in $\mathscr{D}_n$ such that $\det \mathsf{M} \neq 0$. By using Lemma \ref{LeDn}, we deduce after a recursive argument that the value of $|\det \mathsf{M}|$ must belong to the set $\{2^0, 2^1, \dots, 2^{\lfloor \frac{n}{2} \rfloor}\}$.
\end{proof}

\noindent Denote by $\mathscr{A}_n$ the subset of $\mathscr{D}_n$ consisting of the matrices $\mathsf{M} = \big(d(i_l,r_l)\big)_{l \in [n]}$ such that $r_l < 0$.

\begin{lemma} \label{LeAn}
Let $\mathsf{M}$ be a square submatrix of a matrix in $\mathscr{A}_n$. Then, $|\det \mathsf{M}| \in \{0, 1\}$.
\end{lemma}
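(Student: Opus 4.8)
\noindent The plan is to observe that a matrix $\mathsf{M}$ in $\mathscr{A}_n$ has every row of the form $d(i_l,r_l)=e_{i_l}-e_{|r_l|}$ with $i_l<|r_l|$, so $\mathsf{M}$ is nothing but a row selection (possibly with repetitions) from the oriented incidence matrix of a digraph on the vertex set $[n]$; such matrices are totally unimodular. I would therefore prove directly, by induction on the order $m$, that \emph{every} square submatrix $\mathsf{N}$ of a matrix $\mathsf{M}\in\mathscr{A}_n$ satisfies $|\det\mathsf{N}|\in\{0,1\}$. The base case $m=1$ is immediate, since each entry of $\mathsf{M}$ lies in $\{-1,0,1\}$.

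For the inductive step, fix the set of rows and the set of columns of $\mathsf{M}$ that survive in $\mathsf{N}$. Each surviving row $e_{i_l}-e_{|r_l|}$, once restricted to the surviving columns, falls into exactly one of three types: it has no nonzero entry (both $i_l$ and $|r_l|$ deleted), it has exactly one nonzero entry, equal to $\pm 1$ (exactly one of $i_l,|r_l|$ deleted), or it has exactly one $+1$ and one $-1$ (neither deleted). If some surviving row is of the first type, $\mathsf{N}$ has a zero row and $\det\mathsf{N}=0$. If some surviving row is of the second type, cofactor expansion along it gives $\det\mathsf{N}=\pm\det\mathsf{N}'$, where $\mathsf{N}'$ is the order-$(m-1)$ matrix obtained from $\mathsf{N}$ by deleting that row together with the column of its nonzero entry; since $\mathsf{N}'$ is again a square submatrix of the same $\mathsf{M}\in\mathscr{A}_n$, the induction hypothesis applies. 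Finally, if every surviving row is of the third type, then each row of $\mathsf{N}$ sums to $1+(-1)=0$, so $\mathsf{N}\mathbf{1}=0$ and $\det\mathsf{N}=0$. This exhausts all cases and closes the induction.

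The argument is essentially routine; I do not expect a genuine obstacle. The one place where the defining hypothesis of $\mathscr{A}_n$ (namely $r_l<0$ for all $l$, i.e. each row is a difference $e_i-e_j$ and never a sum $e_i+e_j$) is really used is the last case: for a row $e_i+e_j$ the two nonzero entries would both be $+1$, the row sums would equal $2$, and one could only extract an alternating column dependence — precisely the mechanism that produces the factor $\begin{vmatrix}1&-1\\1&1\end{vmatrix}=2$ in Proposition~\ref{PrDn}. As an alternative in the style of the surrounding section, one could instead invoke Algorithm~D1: a nonsingular $\mathsf{M}\in\mathscr{A}_n$ cannot contain two rows sharing the pair $(i_l,|r_l|)$ (they would coincide), so Algorithm~D1 always applies, and one checks it preserves the "all signs negative" property, peeling off a block $(1)$ and reducing $\mathscr{A}_n$ to $\mathscr{A}_{n-1}$; combined with the cofactor reduction of Lemma~\ref{LeDn} this again yields the claim for arbitrary square submatrices. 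I would nonetheless present the incidence-matrix induction above, since it handles submatrices uniformly and in a single pass.
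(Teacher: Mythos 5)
Your proof is correct and takes essentially the same route as the paper's: both rest on the observation that rows of the form $e_i - e_j$ annihilate the all-ones vector (your ``rows sum to zero'' case is precisely the paper's statement that the row space lies in $\langle e_1+\dots+e_n\rangle^{\perp}$) together with cofactor reduction along rows that survive with at most one nonzero entry. The only difference is organizational: you fold both steps into a single induction on the order of the submatrix, while the paper first shows $\det\mathsf{M}=0$ for full $\mathsf{M}\in\mathscr{A}_m$ and then reduces submatrices to that case or to $|\det|=1$ via the mechanism of Lemma~\ref{LeDn}.
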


\begin{proof}
It is known that the dimension of the subspace generated by $\big\{d(i,-j)\big\}_{1 \leq i < j \leq n}$ is $n-1$, as its orthogonal complement is $\langle e_1 + \dots + e_n \rangle$. Therefore, for every $\mathsf{M}$ in $\mathscr{A}_n$, $\det \mathsf{M} = 0$.\\
Now, suppose that $\mathsf{M}$ be a square submatrix of order $m$ of a matrix in $\mathscr{A}_n$. With an argument similar to the proof of Lemma \ref{LeDn}, whether we end up at $|\det \mathsf{M}| = 1$ at the end, or we come to a nonnegative integer $m' \leq m$ and a square submatrix $\mathsf{M}'$ of $\mathsf{M}$ such that $\mathsf{M}' \in \mathscr{A}_{m'}$ and $|\det \mathsf{M}| = |\det \mathsf{M}'| = 0$.
\end{proof}

\noindent We come to the main result of this section.
 
\begin{theorem} \label{ThMi}
Take an integer $n \geq 2$. Then
$$\mathrm{Min}(\Phi_{A_{n-1}}^+) = \{0,\, \pm 1\} \quad \text{and} \quad \mathrm{Min}(\Phi_{B_n}^+) = \{0,\, \pm 2^0,\, \pm 2^1,\, \dots,\, \pm 2^{\lfloor \frac{n}{2} \rfloor}\}.$$
\end{theorem}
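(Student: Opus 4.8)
The plan is to compute the two minor sets separately, in each case showing the two inclusions: every claimed value actually occurs as a minor, and no other value occurs. The ``only other values do not occur'' direction is exactly what Proposition~\ref{PrDn} and Lemma~\ref{LeAn} give us, once we identify the relevant matrices $\mathsf{M}_{\Phi_{A_{n-1}}^+}$ and $\mathsf{M}_{\Phi_{B_n}^+}$ with (submatrices of) matrices in $\mathscr{D}_n$ and $\mathscr{A}_n$. So the bulk of the argument is this identification together with the reverse inclusion, i.e. exhibiting explicit square submatrices realizing each of the values $0, 1$ (for $A_{n-1}$) and $0, 2^0, 2^1, \dots, 2^{\lfloor n/2\rfloor}$ (for $B_n$).

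For $\Phi_{A_{n-1}}^+ = \{e_i - e_j \mid 1 \le i < j \le n\}$: the rows of $\mathsf{M}_{\Phi_{A_{n-1}}^+}$ are the vectors $d(i,-j) = e_i - e_j$ with $i<j$, so every square submatrix of $\mathsf{M}_{\Phi_{A_{n-1}}^+}$ is a square submatrix of a matrix in $\mathscr{A}_n$ (after reordering rows, which does not change $|\det|$). Lemma~\ref{LeAn} then immediately gives $\mathrm{Min}(\Phi_{A_{n-1}}^+) \subseteq \{0, \pm 1\}$. For the reverse inclusion: a single entry $e_i - e_j$ has a $1\times 1$ submatrix equal to $(1)$ (or $(-1)$), giving $\pm 1$; and any two parallel-in-a-cycle rows, e.g. the $2\times 2$ submatrix picking columns $i,j$ from rows $e_i - e_j$ and $e_i - e_j$ duplicated, or more simply a $2\times 2$ submatrix that is singular, yields $0$. (Concretely, for $n \ge 2$ the submatrix on rows $\{e_1-e_2\}$ and columns $\{1\}$ gives $1$; for $0$, take any $2\times 2$ submatrix of $\mathsf{M}_{\Phi_{A_{n-1}}^+}$ with a repeated or zero column, which exists since $n\ge 2$ forces enough zero entries.) This settles the $A$ case.

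For $\Phi_{B_n}^+ = \{e_i \pm e_j \mid i<j\} \cup \{e_i \mid i \in [n]\}$: first I would check that, up to scaling individual rows by $-1$ and permuting rows and columns (none of which change $|\det|$ of a square submatrix), every square submatrix of $\mathsf{M}_{\Phi_{B_n}^+}$ is a square submatrix of some matrix in $\mathscr{D}_n$. The rows $e_i + \mathrm{sgn}(r) e_{|r|} = d(i,r)$ cover the $e_i \pm e_j$ roots directly; the singleton roots $e_i$ correspond, inside a submatrix, to rows with a single nonzero entry, which is precisely the situation handled in the recursion of Lemma~\ref{LeDn} (a row of the form $\pm e_j$ restricted to chosen columns is either zero, or a unit row, and the latter gets peeled off via cofactor expansion without changing $|\det|$). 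So Proposition~\ref{PrDn} applies and gives $\mathrm{Min}(\Phi_{B_n}^+) \subseteq \{0,\, \pm 2^0,\, \dots,\, \pm 2^{\lfloor n/2 \rfloor}\}$. For the reverse inclusion I would exhibit, for each $k$ with $0 \le k \le \lfloor n/2\rfloor$, a $2k \times 2k$ submatrix that is block-diagonal with $k$ blocks, each block being $\begin{pmatrix} 1 & -1 \\ 1 & 1 \end{pmatrix}$ built from the root pair $\{e_{2l-1} - e_{2l},\ e_{2l-1} + e_{2l}\}$ on columns $\{2l-1, 2l\}$; this has determinant $2^k$. The value $1$ comes from a single root $e_i$ (a $1\times 1$ submatrix), and $0$ from any singular submatrix, which again exists for $n \ge 2$.

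The main obstacle is the bookkeeping in the reverse-inclusion step and, more subtly, making the reduction ``submatrix of $\mathsf{M}_{\Phi_{B_n}^+}$ $\leadsto$ submatrix of a matrix in $\mathscr{D}_n$'' airtight: one must be careful that restricting a root $d(i,r)$ to an arbitrary column subset can destroy the $d(\cdot,\cdot)$ shape (producing a zero row, a unit row, or a $d$-row), and argue that in every case the recursion of Lemma~\ref{LeDn} either terminates with $|\det| \in \{0,1\}$ or reaches a genuine element of $\mathscr{D}_{m'}$ to which Proposition~\ref{PrDn} applies — the constant $2^{\lfloor n/2 \rfloor}$ bound being monotone in the ambient dimension, the bound for $m' \le n$ is subsumed by the bound for $n$. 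Once that is in place, assembling the four set-equalities is routine.
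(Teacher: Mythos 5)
Your proposal is correct and follows essentially the same route as the paper: identify minors of $\mathsf{M}_{\Phi_{A_{n-1}}^+}$ with minors of matrices in $\mathscr{A}_n$ and invoke Lemma~\ref{LeAn}, and for type $B$ reduce to $\mathscr{D}$-matrices and invoke Lemma~\ref{LeDn} and Proposition~\ref{PrDn}. Two points of comparison. First, for $B_n$ the paper handles the singleton roots $e_i$ by embedding $\mathsf{M}_{\Phi_{B_n}^+}$ into a matrix in $\mathscr{D}_r$ with $r \geq n$, viewing $e_i$ as the restriction of $d(i,r')=e_i+e_{r'}$ for a fresh coordinate $r'>n$; Lemma~\ref{LeDn} then applies verbatim and gives an $\mathsf{M}'\in\mathscr{D}_{m'}$ with $m'\leq n$ (since the submatrix has order $\leq n$), after which Proposition~\ref{PrDn} gives the bound $2^{\lfloor n/2\rfloor}$. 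You instead observe directly that restricted $e_i$-rows are zero or unit rows, and that the cofactor-expansion recursion of Lemma~\ref{LeDn} peels them off; that is equivalent, but your phrase ``every square submatrix of $\mathsf{M}_{\Phi_{B_n}^+}$ is a square submatrix of some matrix in $\mathscr{D}_n$'' is literally false as stated (an $e_i$-row is not of $d$-form in $\mathbb{R}^n$) and should be replaced either by the paper's lift or by the reduction statement you actually use. Second, you supply the reverse inclusion explicitly, with the block-diagonal $2k\times 2k$ construction realizing $\pm 2^k$ and the $1\times 1$ submatrices realizing $\pm 1$; the paper's proof only establishes the inclusion $\subseteq$ and tacitly treats the converse as evident, so your addition tightens the argument (though note that for $n=2$ one does not actually obtain $0$ as a minor of $\mathsf{M}_{\Phi_{A_1}^+}$, an edge case both you and the paper gloss over and which is immaterial for the intended application to prime reductions).
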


\begin{proof}
A minor of $\mathsf{M}_{\Phi_{A_{n-1}}^+}$ is the determinant of a square submatrix of a matrix in $\mathscr{A}_n$. Then, we deduce from Lemma \ref{LeAn} that $\mathrm{Min}(\Phi_{A_{n-1}}^+) = \{0,\, \pm 1\}$.\\
There exists an integer $r \geq n$ such that a minor $t$ of $\mathsf{M}_{\Phi_{B_n}^+}$ is the determinant of a square submatrix of a matrix in $\mathscr{D}_r$. From Lemma \ref{LeDn}, we deduce that
\begin{itemize}
\item whether $|t| \in \{0, 1\}$,
\item or there exist an integer $m \in [n]$, and a matrix $\mathsf{M} \in \mathscr{D}_{m}$ such that $|t| = |\det \mathsf{M}|$.
\end{itemize}
We conclude, using Proposition \ref{PrDn}, that $\mathrm{Min}(\Phi_{B_n}^+) = \{0,\, \pm 2^0,\, \pm 2^1,\, \dots,\, \pm 2^{\lfloor \frac{n}{2} \rfloor}\}$.
\end{proof}

\noindent We deduce the coboundary polynomials of $\mathcal{A}_{A_{n-1}}$, $\mathcal{A}_{B_n}$, and $\mathcal{A}_{D_n}$.

\begin{theorem} \label{ThCoCl}
For an integer $n \geq 2$, let $X_m$ be one of the types in $\{A_{n-1}, B_n, D_n\}$. Then, the coboundary polynomial of $\mathcal{A}_{X_m}$ is 
$$\bar{\chi}_{\mathcal{A}_{X_m}}(q,t) = \sum_{k=0}^{\#\Phi_{X_m}^+} \sum_{i \in [m+1]} \Big(\prod_{\substack{j \in [m+1] \\ j \neq i}} \frac{q - \mathrm{p}(j)}{\mathrm{p}(i) - \mathrm{p}(j)}\Big) \times [t^k]\bar{\chi}_{\mathcal{A}_{X_m}}\big(\mathrm{p}(i),t\big) \times t^k,$$
\begin{align*}
\text{with}\quad & \bar{\chi}_{\mathcal{A}_{A_{n-1}}}\big(\mathrm{p}(i),t\big) = \sum_{(a_1, \dots, a_u) \in \mathrm{Par}(n)} \binom{\mathrm{p}(i)}{u} \binom{n}{a_1, \dots, a_u} \frac{t^{\sum_{k=1}^u \binom{a_k}{2}}}{\mathrm{p}(i)}, \\
& \bar{\chi}_{\mathcal{A}_{B_n}}\big(\mathrm{p}(i),t\big) = \sum_{a=0}^n \sum_{(b_1, \dots, b_u) \in \mathrm{Par}(n-a)} \binom{\frac{\mathrm{p}(i)-1}{2}}{u}  \binom{n-a}{b_1, \dots, b_u} 2^{\sum_{k=1}^u b_k} t^{a^2 + \sum_{k=1}^{u} \binom{b_k}{2}}, \\
& \bar{\chi}_{\mathcal{A}_{D_n}}\big(\mathrm{p}(i),t\big) = \sum_{a=0}^n \sum_{(b_1, \dots, b_u) \in \mathrm{Par}(n-a)} \binom{\frac{\mathrm{p}(i)-1}{2}}{u}  \binom{n-a}{b_1, \dots, b_u} 2^{\sum_{k=1}^u b_k} t^{a(a-1) + \sum_{k=1}^{u} \binom{b_k}{2}}. 
\end{align*}
\end{theorem}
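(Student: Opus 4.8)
\noindent The plan is to express $\bar{\chi}_{\mathcal{A}_{X_m}}$ through a Lagrange interpolation in the first variable and to compute the interpolation data by the finite field method. The starting observation is that, by its very definition, $\bar{\chi}_{\mathcal{A}}(q,t)=\sum_{\mathcal{B}\subseteq\mathcal{A}}q^{\mathrm{r}(\mathcal{A})-\mathrm{r}(\mathcal{B})}(t-1)^{\#\mathcal{B}}$ is, for each fixed $t$, a polynomial in $q$ of degree $\mathrm{r}(\mathcal{A})$, so every coefficient $[t^k]\bar{\chi}_{\mathcal{A}}(q,t)$ is a polynomial in $q$ of degree at most $\mathrm{r}(\mathcal{A})$. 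Since $\mathrm{r}(\mathcal{A}_{A_{n-1}})=n-1$ and $\mathrm{r}(\mathcal{A}_{B_n})=\mathrm{r}(\mathcal{A}_{D_n})=n$, in every case $\mathrm{r}(\mathcal{A}_{X_m})=m$, hence $\bar{\chi}_{\mathcal{A}_{X_m}}(q,t)$ is determined, as a polynomial in $q$, by its $m+1$ values at the nodes $\mathrm{p}(1),\dots,\mathrm{p}(m+1)$. Writing out the Lagrange formula at these nodes, with $\prod_{j\in[m+1],\,j\neq i}\frac{q-\mathrm{p}(j)}{\mathrm{p}(i)-\mathrm{p}(j)}$ the $i$-th Lagrange basis polynomial, and then substituting $\bar{\chi}_{\mathcal{A}_{X_m}}(\mathrm{p}(i),t)=\sum_{k}[t^k]\bar{\chi}_{\mathcal{A}_{X_m}}(\mathrm{p}(i),t)\,t^{k}$, yields exactly the displayed outer double sum; so the task reduces to identifying $\bar{\chi}_{\mathcal{A}_{X_m}}(\mathrm{p}(i),t)$ for $i\in[m+1]$.

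\noindent Next I would verify that the finite field method is available at each node. As $\mathrm{p}(0)=2$, the nodes $\mathrm{p}(1),\dots,\mathrm{p}(m+1)$ are odd primes. Theorem~\ref{ThMi} gives $\mathrm{Min}(\Phi_{A_{n-1}}^+)=\{0,\pm1\}$ and $\mathrm{Min}(\Phi_{B_n}^+)=\{0,\pm2^0,\dots,\pm2^{\lfloor n/2\rfloor}\}$, and since $\Phi_{D_n}^+\subseteq\Phi_{B_n}^+$ every square submatrix of $\mathsf{M}_{\Phi_{D_n}^+}$ is a square submatrix of $\mathsf{M}_{\Phi_{B_n}^+}$, so $\mathrm{Min}(\Phi_{D_n}^+)\subseteq\{0,\pm2^0,\dots,\pm2^{\lfloor n/2\rfloor}\}$ as well (alternatively one invokes Proposition~\ref{PrDn} directly). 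In all three cases the minor set contains no odd prime, so by Lemma~\ref{LeRe} the arrangement $\mathcal{A}_{X_m}$ reduces correctly over each $\mathbb{F}_{\mathrm{p}(i)}$, and Theorem~\ref{ThFi} applies, giving $\bar{\chi}_{\mathcal{A}_{X_m}}(\mathrm{p}(i),t)=\mathrm{p}(i)^{\mathrm{r}(\mathcal{A}_{X_m})-n}\sum_{\bar{x}\in\mathbb{F}_{\mathrm{p}(i)}^{n}}t^{\#\bar{\mathcal{A}}_{X_m}(\bar{x})}$. The exponent $\mathrm{r}(\mathcal{A}_{X_m})-n$ equals $-1$ for $A_{n-1}$ (this is the origin of the factor $\tfrac1{\mathrm{p}(i)}$ in that line) and $0$ for $B_n$ and $D_n$.

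\noindent The remaining, and most laborious, step is to evaluate $\sum_{\bar x}t^{\#\bar{\mathcal{A}}_{X_m}(\bar x)}$ over $\mathbb{F}_p^{n}$ with $p=\mathrm{p}(i)$. For $A_{n-1}$, $\bar x$ lies on $\{x_a-x_b=0\}$ iff $\bar x_a=\bar x_b$, so I would group the coordinate positions by the value they carry: if $u$ values of $\mathbb{F}_p$ are used, with multiplicities forming a composition $(a_1,\dots,a_u)\in\mathrm{Par}(n)$, then $\#\bar{\mathcal{A}}(\bar x)=\sum_k\binom{a_k}{2}$; summing over the $\binom{p}{u}$ choices of the used values, over the composition, and over the $\binom{n}{a_1,\dots,a_u}$ placements of positions (then dividing by $p$) gives the first formula. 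For $B_n$ and $D_n$, I would split off the $a$ positions on which $\bar x$ vanishes and pair each remaining value $c$ with $-c$; a routine check shows that no hyperplane relates two positions lying in different $\pm$-classes or one zero and one nonzero position, that a $\pm$-class carrying $b^{+}$ coordinates equal to $c$ and $b^{-}$ equal to $-c$ contributes $\binom{b^{+}}{2}+\binom{b^{-}}{2}+b^{+}b^{-}=\binom{b^{+}+b^{-}}{2}$ incidences, and that the zero block contributes $2\binom{a}{2}+a=a^{2}$ incidences for $B_n$ (the term $a$ coming from the hyperplanes $\{x_a=0\}$) and $2\binom{a}{2}=a(a-1)$ for $D_n$. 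Hence $\#\bar{\mathcal{A}}_{X_m}(\bar x)$ splits over the zero block and the $\pm$-classes with no cross terms, and summing over $a$, over the $\binom{(p-1)/2}{u}$ choices of occupied $\pm$-classes, over the composition $(b_1,\dots,b_u)\in\mathrm{Par}(n-a)$, over the placement of the $n$ positions into the zero block and the classes, and over the $2^{\sum_k b_k}$ sign assignments inside the classes, produces the $B_n$ and $D_n$ formulas.

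\noindent I expect the main obstacle to be precisely this last count for $B_n$ and $D_n$. One must keep the sign freedoms cleanly separated from the class-size data, confirm the per-class identity $\binom{b^{+}}{2}+\binom{b^{-}}{2}+b^{+}b^{-}=\binom{b}{2}$ and the values $a^2$, resp.\ $a(a-1)$, of the zero block, and --- the point one most easily overlooks --- check that the exponent $\#\bar{\mathcal{A}}_{X_m}(\bar x)$ really is additive over these blocks. Once that is done, and given Theorems~\ref{ThFi} and~\ref{ThMi}, the rest (polynomiality in $q$, validity of the nodes, and the multinomial bookkeeping) is routine.
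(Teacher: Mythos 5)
Your proposal follows the same overall route as the paper's own proof: observe that each coefficient $[t^k]\bar{\chi}_{\mathcal{A}_{X_m}}(q,t)$ has degree at most $m$ in $q$, use Lemma~\ref{LeRe} and Theorem~\ref{ThMi} to certify that the odd primes $\mathrm{p}(1),\dots,\mathrm{p}(m+1)$ are valid nodes for the finite field method of Theorem~\ref{ThFi}, and conclude by Lagrange interpolation. Where the paper simply cites Ardila's Theorems~4.1--4.3 for the three per-node formulas, you derive them from scratch by counting $\#\bar{\mathcal{A}}_{X_m}(\bar x)$ directly; you also fill in two points the paper leaves implicit, namely why $\Phi_{D_n}^+$ has no odd prime in its minor set (via $\Phi_{D_n}^+ \subseteq \Phi_{B_n}^+$ or Proposition~\ref{PrDn}) and why the factor $1/\mathrm{p}(i)$ appears only in the $A_{n-1}$ line (because $\mathrm{r}(\mathcal{A}_{A_{n-1}})-n=-1$). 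That is a genuine improvement in self-containedness, and your reduction to the per-class identity $\binom{b^+}{2}+\binom{b^-}{2}+b^+b^- = \binom{b}{2}$ together with the additivity of $\#\bar{\mathcal{A}}(\bar x)$ over the zero block and the $\pm$-classes is correct.

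One point deserves a flag, though. Your bookkeeping for $B_n$ and $D_n$ --- summing ``over the placement of the $n$ positions into the zero block and the classes'' --- necessarily produces the multinomial $\binom{n}{a,\,b_1,\dots,b_u} = \binom{n}{a}\binom{n-a}{b_1,\dots,b_u}$, not the bare $\binom{n-a}{b_1,\dots,b_u}$ that appears in the displayed $B_n$ and $D_n$ formulas. The missing $\binom{n}{a}$ seems to be an error in the stated formula: already for $n=2$, $q=3$ the displayed $B_n$ formula gives $t^4+6t$, whereas the finite field count (and equally the coefficient of $x^2/2!$ in Ardila's generating-function identity) gives $t^4+8t$; inserting $\binom{n}{a}$ restores agreement. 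So your derivation is correct, but your closing claim that it ``produces the $B_n$ and $D_n$ formulas'' as written is not literally true --- it produces the corrected ones. You should state the extra factor explicitly and note the discrepancy, rather than present the computation as matching the given display.
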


\begin{proof}
The degree of $[t^k]\bar{\chi}_{\mathcal{A}_{X_m}}(q,t)$ in variable $q$ is less than or equal to $m$. From Lemma~\ref{LeRe}, and Theorem~\ref{ThMi}, we know that $\big(\mathrm{p}(i), [t^k]\bar{\chi}_{\mathcal{A}}(\mathrm{p}(i),t)\big)$, with $i \in [m+1]$, are $m+1$ valid data tuples. Then, using the polynomial interpolation of Lagrange, we obtain
$$\bar{\chi}_{\mathcal{A}_{X_m}}(q,t) = \sum_{k=0}^{\#\Phi_{X_m}^+} \sum_{i \in [m+1]} \Big(\prod_{\substack{j \in [m+1] \\ j \neq i}} \frac{q - \mathrm{p}(j)}{\mathrm{p}(i) - \mathrm{p}(j)}\Big) \times [t^k]\bar{\chi}_{\mathcal{A}_{X_m}}\big(\mathrm{p}(i),t\big) \times t^k.$$
The polynomials $\bar{\chi}_{\mathcal{A}_{A_{n-1}}}\big(\mathrm{p}(i),t\big)$, $\bar{\chi}_{\mathcal{A}_{B_n}}\big(\mathrm{p}(i),t\big)$, and $\bar{\chi}_{\mathcal{A}_{D_n}}\big(\mathrm{p}(i),t\big)$ are respectively deduced from \cite[Theorem~4.1]{Ar}, \cite[Theorem~4.2]{Ar}, and \cite[Theorem~4.3]{Ar}.
\end{proof}

\begin{example}
Using \texttt{SageMath}, we compute the coboundary polynomial $\bar{\chi}_{\mathcal{A}_{A_{12}}}(q,t)$, and deduce the Tutte polynomial $T_{\mathcal{A}_{12}}(x,y) = y^{66} + 12y^{65} + 78y^{64} + 364y^{63} + 1365y^{62} + 4368y^{61} + 12376y^{60} + 31824y^{59} + 75582y^{58} + 167960y^{57} + 13xy^{55} + 352716y^{56} + 143xy^{54} + 705419y^{55} + 858xy^{53} + 1351922y^{54} + 3718xy^{52} + 2495130y^{53} + 13013xy^{51} + 4452668y^{52} + 39039xy^{50} + 7708415y^{51} + 104104xy^{49} + 12981111y^{50} + 252824xy^{48} + 21313292y^{49} + 568854xy^{47} + 34183578y^{48} + 78x^2 y^{45} + 1200914xy^{46} + 53644734y^{47} + 780x^2 y^{44} + 2401750xy^{45} + 82488835y^{46} + 4290x^2 y^{43} + 4584372xy^{44} + 124439172y^{45} + 17160x^2 y^{42} + 8400392xy^{43} + 184366182y^{44} + 55770x^2y^{41} + 14845402xy^{42} + 268521682y^{43} + 156156x^2 y^{40} + 25395942xy^{41} + 384782112y^{42} + 390390x^2 y^{39} + 42181568xy^{40} + 542887488y^{41} + 892320x^2 y^{38} + 68193697xy^{39} + 754658542y^{40} + 286x^3 y^{36} + 1896180x^2 y^{37} + 107530137xy^{38} + 1034170357y^{39} + 2574x^3 y^{35} + 3792360x^2 y^{36} + 165670648xy^{37} + 1397857032y^{38} + 12870x^3 y^{34} + 7204626x^2 y^{35} + 249773810xy^{36} + 1864518656y^{37} + 47190x^3 y^{33} + 13092300x^2 y^{34} + 368979039xy^{35} + 2455199604y^{36} + 141570x^3 y^{32} + 22879350x^2 y^{33} + 534690507xy^{34} + 3192906717y^{35} + 368082x^3 y^{31} + 38610000x^2 y^{32} + 760812702xy^{33} + 4102137897y^{34} + 858858x^3 y^{30} + 63127350x^2 y^{31} + 1063901124xy^{32} + 5208196422y^{33} + 715x^4 y^{28} + 1840410x^3 y^{29} + 100267596x^2 y^{30} + 1463187583xy^{31} + 6536274030y^{32} + 5720x^4 y^{27} + 3682250x^3 y^{28} + 155060620x^2 y^{29} + 1980439175xy^{30} + 8110295765y^{31} + 25740x^4 y^{26} + 6961240x^3 y^{27} + 233921545x^2 y^{28} + 2639608972xy^{29} + 9951530017y^{30} + 85800x^4 y^{25} + 12540528x^3 y^{26} + 344807320x^2 y^{27} + 3466233342xy^{28} + 12076978342y^{29} + 235950x^4 y^{24} + 21659352x^3 y^{25} + 497305380x^2 y^{26} + 4486536197xy^{27} + 14497575092y^{28} + 566280x^4 y^{23} + 36027420x^3 y^{24} + 702617916x^2 y^{25} + 5726217783xy^{26} + 17216245827y^{27} + 1287x^5 y^{21} + 1226940x^4 y^{22} + 57915000x^3 y^{23} + 973411725x^2 y^{24} + 7208935734xy^{25} + 20225890047y^{26} + 9009x^5 y^{20} + 2460315x^4 y^{21} + 90240150x^3 y^{22} + 1323519990x^2 y^{23} + 8954517000xy^{24} + 23507359590y^{25} + 36036x^5 y^{19} + 4639635x^4 y^{20} + 136645509x^3 y^{21} + 1767482925x^2 y^{22} + 10976952480xy^{23} + 27027494637y^{24} + 108108x^5 y^{18} + 8308300x^4 y^{19} + 201522321x^3 y^{20} + 2319877131x^2 y^{21} + 13282216168xy^{22} + 30737260554y^{23} + 270270x^5 y^{17} + 14214200x^4y^{18} + 289949660x^3y^{19} + 2994411277x^2 y^{20} + 15865968404xy^{21} + 34570017053y^{22} + 1716x^6 y^{15} + 594594x^5 y^{16} + 23333310x^4 y^{17} + 407552860x^3 y^{18} + 3802864780x^2 y^{19} + 18711238546x y^{20} + 38439949834y^{21} + 10296x^6 y^{14} + 1204632x^5 y^{15} + 36911160x^4 y^{16} + 560321190x^3 y^{17} + 4754000680x^2 y^{18} + 21786110775xy^{19} + 42240691636y^{20} + 36036x^6 y^{13} + 2290860x^5 y^{14} + 56525040x^4 y^{15} + 754462566x^3 y^{16} + 5852533830x^2 y^{17} + 25041209765xy^{18} + 45844190249y^{19} + 96096x^6 y^{12} + 4117113x^5 y^{13} + 84041100x^4 y^{14} + 996144864x^3 y^{15} + 7097905815x^2 y^{16} + 28406477814x y^{17} + 49100042277 y^{18} + 1716x^7 y^{10} + 216216x^6 y^{11} + 7018011x^5 y^{12} + 121546425x^4 y^{13} + 1291054050x^3 y^{14} + 8482363890x^2 y^{15} + 31786670916xy^{16} + 51835895826y^{17} + 8580x^7y^9 + 456456x^6y^{10} + 11447436x^5y^{11} + 171372201x^4y^{12} + 1644037395x^3y^{13} + 9988126005x^2y^{14} + 35055564258xy^{15} + 53860118169y^{16} + 25740x^7y^8 + 900900x^6y^9 + 18078060x^5y^{10} + 236384148x^4y^{11} + 2058775719x^3y^{12} + 11583115401x^2y^{13} + 38049502062xy^{14} + 54968456400y^{15} + 1287x^8y^6 + 60060x^7y^7 + 1647360x^6y^8 + 27717690x^5y^9 + 319875556x^4y^{10} + 2536227408x^3y^{11} + 13213854732x^2y^{12} + 40561541020xy^{13} + 54956679987y^{14} + 5148x^8y^5 + 145860x^7y^6 + 2869152x^6y^7 + 41342730x^5y^8 + 424848710x^4y^9 + 3070406768x^3y^{10} + 14794737022x^2y^{11} + 42339276044xy^{12} + 53641257890y^{13} + x^{12} + 715x^9y^3 + 12870x^8y^4 + 308880x^7y^5 + 4975971x^6y^6 + 60537048x^5y^7 + 553491510x^4y^8 + 3642506725x^3y^9 + 16196480443x^2y^{10} + 43092548434xy^{11} + 50889618637y^{12} + 66x^{11} + 286x^{10}y + 2145x^9y^2 + 45045x^8y^3 + 626340x^7y^4 + 8270262x^6y^5 + 87038952x^5y^6 + 705403842x^4y^7 + 4213159665x^3y^8 + 17238306085x^2y^9 + 42518818056xy^{10} + 46659503176y^{11} + 1925x^{10} + 14300x^9y + 113685x^8y^2 + 1317030x^7y^3 + 13498485x^6y^4 + 121050930x^5y^5 + 870740871x^4y^6 + 4709432013x^3y^7 + 17688057387x^2y^8 + 40352717405xy^9 + 41042986985y^10 + 32670x^9 + 308022x^8y + 2449590x^7y^2 + 21010990x^6y^3 + 160690530x^5y^4 + 1023013992x^4y^5 + 5013315307x^3y^6 + 17283458049x^2y^7 + 36445235541xy^8 + 34305690705y^9 + 357423x^8 + 3740880x^7y + 28330302x^6y^2 + 193708515x^5y^3 + 1110769660x^4y^4 + 4969514550x^3y^5 + 15799102176x^2y^6 + 30871572303xy^7 + 26905602867y^8 + 2637558x^7 + 28139826x^6y + 193426233x^5y^2 + 1055099045x^4y^3 + 4427328620x^3y^4 + 13168529088x^2y^5 + 24040142841xy^6 + 19469297133y^7 + 13339535x^6 + 135486780x^5y + 801375861x^4y^2 + 3353221300x^3y^3 + 9638525325x^2y^4 + 16736474040xy^5 + 12703397135y^6 + 45995730x^5 + 416102258x^4y + 1979224104x^3y^2 + 5859961536x^2y^3 + 10012432430xy^4 + 7237654710y^5 + 105258076x^4 + 784515160x^3y + 2716328472x^2y^2 + 4857283288xy^3 + 3436086940y^4 + 150917976x^3 + 829158408x^2y + 1746026568xy^2 + 1263741336y^3 + 120543840x^2 + 397126080xy + 316499040y^2 + 39916800x + 39916800y$.
\end{example}

\section{Shifted Young Diagram} \label{SeYo}

\noindent We introduce some definitions on the shifted Young diagram, and associate a topology on it. Then, we expose the shifted Young diagrams associated to the positive root systems of the classical Lie algebras. These diagrams play a central role in our computing as \emph{the open sets of these diagrams correspond to the ideal arrangements \cite[Theorem 3.1]{CePa}.} 

\begin{definition}
A shifted Young diagram $Y$ is a finite collection of boxes arranged rows, $\mathbf{b}(i,j)$ designating the box of the $i^{\text{th}}$ row and the $j^{\text{th}}$ column, such that,
\begin{itemize}
\item if $Y$ has more than one row,
\item if $\mathbf{b}(i,l_i)$ resp. $\mathbf{b}(i,r_i)$ designates the leftmost resp. rightmost box on the $i^{\text{th}}$ row of $Y$,
\end{itemize}
then $l_i \leq l_{i+1}$ and $r_i \geq r_{i+1}$.
\end{definition}

\noindent If a shifted Young diagram $Y$ has $k$ rows, then the $k$-tuple $(r_1 -l_1 +1, \dots,r_k -l_k +1)$ is called its shape. The box set of $Y$ is $$\mathbf{B} := \big\{\mathbf{b}(i,j)\ |\ i \in [k],\, j \in \{l_k, \dots, r_k\}\big\}.$$

\begin{definition}
Take a shifted Young diagram $Y$, and a box $\mathbf{b}(i,j)$ of $\mathbf{B}$. The box set $\mathbf{B}_{i,j}$ of $Y$ generated by $\mathbf{b}(i,j)$ is
$$\mathbf{B}_{i,j} := \big\{\mathbf{b}(u,v) \in \mathbf{B}\ |\ u \geq i,\, v \geq j\big\}.$$
\end{definition}

\begin{lemma}
Define the set $\mathscr{O} := \big\{\mathbf{B}_{i,j}\ |\ i \in [k],\, j \in \{l_k, \dots, r_k\}\big\}$. Then, $\mathscr{O}$ is a basis for the topological space $(\mathbf{B}, \mathscr{O})$. We denote by $\mathscr{T}$ the topology of $(\mathbf{B}, \mathscr{O})$.
\end{lemma}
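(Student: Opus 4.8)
The plan is to verify that the proposed collection $\mathscr{O} = \{\mathbf{B}_{i,j} \mid i \in [k],\, j \in \{l_k, \dots, r_k\}\}$ satisfies the standard axioms for a basis of a topology on the set $\mathbf{B}$: namely, that $\mathscr{O}$ covers $\mathbf{B}$, and that for any two members $\mathbf{B}_{i,j}, \mathbf{B}_{i',j'}$ of $\mathscr{O}$ and any box $\mathbf{b}(u,v)$ lying in their intersection, there is a member of $\mathscr{O}$ containing $\mathbf{b}(u,v)$ and contained in $\mathbf{B}_{i,j} \cap \mathbf{B}_{i',j'}$. Once a collection forms a basis, the topology it generates (the collection of arbitrary unions of basis elements) is well-defined, which justifies writing $(\mathbf{B},\mathscr{O})$ for the resulting topological space and naming its topology $\mathscr{T}$.

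First I would check the covering condition: for any box $\mathbf{b}(u,v) \in \mathbf{B}$, observe that $\mathbf{b}(u,v) \in \mathbf{B}_{u,v}$, since trivially $u \geq u$ and $v \geq v$, and $\mathbf{B}_{u,v} \in \mathscr{O}$ because $u \in [k]$ and $v \in \{l_k, \dots, r_k\}$ by the definition of $\mathbf{B}$. Hence $\bigcup \mathscr{O} = \mathbf{B}$.

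Next I would establish the key identity that for any two generating boxes, $\mathbf{B}_{i,j} \cap \mathbf{B}_{i',j'} = \mathbf{B}_{\max(i,i'),\,\max(j,j')}$. Indeed, by definition a box $\mathbf{b}(u,v)$ lies in the left-hand side precisely when $u \geq i$, $v \geq j$, $u \geq i'$, and $v \geq j'$, which is equivalent to $u \geq \max(i,i')$ and $v \geq \max(j,j')$; and one must also note that $\max(i,i') \in [k]$ and $\max(j,j') \in \{l_k,\dots,r_k\}$, so that the right-hand side is genuinely a member of $\mathscr{O}$. This gives the basis intersection property in its strongest form: the intersection of two basis elements is itself a basis element, so for any $\mathbf{b}(u,v)$ in the intersection we may simply take that same set as the required intermediate basis element. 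This is the only place where the shape inequalities $l_i \leq l_{i+1}$ and $r_i \geq r_{i+1}$ from the definition of a shifted Young diagram are implicitly relevant, in that they guarantee the index ranges over which $\mathscr{O}$ is indexed behave coherently; the argument itself is otherwise a purely order-theoretic computation on the product poset of row and column indices.

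I do not expect a serious obstacle here — the essential content is the lattice identity $\mathbf{B}_{i,j} \cap \mathbf{B}_{i',j'} = \mathbf{B}_{\max(i,i'),\max(j,j')}$, which closes $\mathscr{O}$ under pairwise intersection and thus makes the basis axioms immediate. The only point requiring a moment's care is confirming that the index $\max(j,j')$ still lies in the admissible column range $\{l_k,\dots,r_k\}$ so that the intersection is a bona fide element of $\mathscr{O}$ rather than merely a subset of $\mathbf{B}$; this follows because both $j$ and $j'$ already lie in that range.
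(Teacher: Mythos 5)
Your proof is correct and takes essentially the same route as the paper: the key step in both is the identity $\mathbf{B}_{i_1,j_1} \cap \mathbf{B}_{i_2,j_2} = \mathbf{B}_{\max(i_1,i_2),\,\max(j_1,j_2)}$, which shows $\mathscr{O}$ is closed under pairwise intersection. You additionally spell out the covering condition and the index-range check, which the paper leaves implicit, but the argument is the same.
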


\begin{proof}
Let $\mathbf{B}_{i_1, j_1}, \mathbf{B}_{i_2, j_2} \in \mathscr{O}$, and $i = \max(i_1,i_2)$, $j = \max(j_1,j_2)$. Then $\mathbf{B}_{i_1, j_1} \cap \mathbf{B}_{i_2, j_2} = \mathbf{B}_{i,j}$ which belongs to $\mathscr{O}$.
\end{proof}

\begin{definition}
Let $O$ be an element of the topology $\mathscr{T}$ of a shifted Young diagram with $k$ rows. There exist an integer $m \leq k$, and $m$ tuples $(i_1,j_1), \dots, (i_m,j_m)$ such that
\begin{align*}
& (1)\ O = \bigcup_{l \in [m]} \mathbf{B}_{i_l, j_l},\\
& (2)\ \forall h,l \in [m],\, \mathbf{B}_{i_h, j_h} \nsubseteq \mathbf{B}_{i_l, j_l},\\
& (3)\ \forall h,l \in [m],\, h<l \Rightarrow i_h < i_l. 
\end{align*}
The set of generating boxes of $O$ is $G_O := \big(\mathbf{b}(i_l,j_l)\big)_{l \in [m]}$.
\end{definition}

\begin{definition}
We say that an open set $O$ of a shifted Young diagram $Y$ with $k$ rows is full if $\big\{\mathbf{b}(i,r_i)\ |\ i \in [k]\big\} \subseteq O$.
\end{definition}

\begin{example}
For the same shifted Young diagram, in Figure \ref{Young},
\begin{itemize}
\item the open set $O_1 = \big\{\mathbf{b}(3,4), \mathbf{b}(3,5), \mathbf{b}(3,6), \mathbf{b}(4,4), \mathbf{b}(4,5), \mathbf{b}(4,6), \mathbf{b}(5,4)\big\}$ is not full,
\item but is the open set $O_2 = \big\{\mathbf{b}(1,6), \mathbf{b}(1,7), \mathbf{b}(2,6), \mathbf{b}(2,7), \mathbf{b}(3,6), \mathbf{b}(4,4), \mathbf{b}(4,5), \mathbf{b}(4,6), \mathbf{b}(5,4)\big\}$.
\end{itemize}

\begin{figure}[h]
\centering
\includegraphics[scale=0.8]{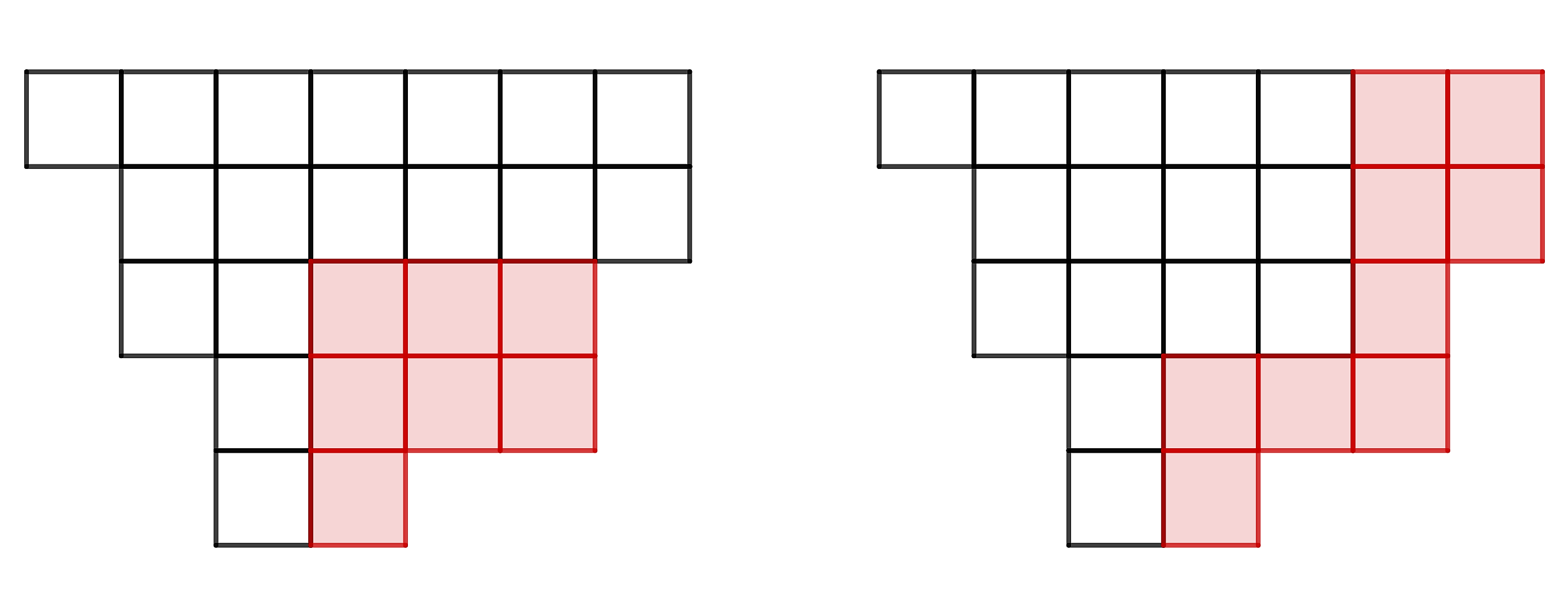}
\caption{The open Sets $O_1$ and $O_2$}
\label{Young}
\end{figure}
\end{example}

\noindent The next step helps us to see which hyperplanes play a role in our calculations. We adopt the following notation, with $i < j$:
\begin{itemize}
\item the tuple $(i,j)$ represents the hyperplane $\{x_i = x_j\}$,
\item the tuple $(i,-j)$ represents the hyperplane $\{x_i = -x_j\}$,
\item the tuple $(i,0)$ represents the hyperplane $\{x_i = 0\}$.
\end{itemize}

\noindent (A) For type $A_{n-1}$, we use the simple system $\Delta_{A_{n-1}} = \big\{\alpha_i = e_i - e_{i+1}\ |\ i \in [n-1]\big\}$. Its suitable Young diagram $Y_{A_{n-1}}$ has shape $(n-1, n-2, \dots, 1)$, and boxes $\mathbf{b}_A(i,j)$ filled with hyperplanes according to the assignment $\mathbf{b}_A(i,j) = (\alpha_i + \dots + \alpha_{n-j})^{\perp}$, $1 \leq i \leq j \leq n-1$. With the adopted notation, $\mathbf{b}_A(i,j) = (i,n-j+1)$, $1 \leq i \leq j \leq n-1$.

\begin{example}
The shifted Young diagram $Y_{A_7}$ is
$$\begin{array}{lllllll}
(1,8) & (1,7) & (1,6) & (1,5) & (1,4) & (1,3) & (1,2) \\
(2,8) & (2,7) & (2,6) & (2,5) & (2,4) & (2,3) &  \\
(3,8) & (3,7) & (3,6) & (3,5) & (3,4) &  &  \\
(4,8) & (4,7) & (4,6) & (4,5) &  &  &  \\
(5,8) & (5,7) & (5,6) &  &  &  &  \\
(6,8) & (6,7) &  &  &  &  &  \\
(7,8) &  &  &  &  &  &  \end{array}$$ 
\end{example}

\noindent (B) For type $B_n$, we use the simple system $\Delta_{B_n} = \big\{\alpha_i = e_i - e_{i+1}\ |\ i \in [n-1]\big\} \sqcup \{\alpha_n = e_n\}$. Its suitable Young diagram $Y_{B_n}$ has shape $(2n-1, 2n-3, \dots, 1)$, and boxes $\mathbf{b}_B(i,j)$ filled with hyperplanes according to the assignment
$$\mathbf{b}_B(i,j) = \left\{ \begin{array}{ll} \big( \alpha_i+\dots +\alpha_j+ 2(\alpha_{j+1}+\dots+\alpha_n) \big)^{\perp}, & 1 \leq j \leq n-1 \\
(\alpha_i+\dots +\alpha_{2n-j})^{\perp}, & n \leq j \leq 2n-1 \end{array} \right..$$
With the adopted notation, we have
$$\mathbf{b}_B(i,j) = \left\{ \begin{array}{ll} \big(i,-(j+1)\big), &  1 \leq i \leq j \leq n-1 \\
(i,0), &  j = n \\ (i, 2n-j+1), & n < j \leq 2n-1 \end{array} \right..$$

\begin{example}
The shifted Young diagram $Y_{B_6}$ is
$$\begin{array}{lllllllllll}
(1,-2) & (1,-3) & (1,-4) & (1,-5) & (1,-6) & (1,0) & (1,6) & (1,5) & (1,4) & (1,3) & (1,2) \\
       & (2,-3) & (2,-4) & (2,-5) & (2,-6) & (2,0) & (2,6) & (2,5) & (2,4) & (2,3) &  \\
             &  & (3,-4) & (3,-5) & (3,-6) & (3,0) & (3,6) & (3,5) & (3,4) &  &  \\
                   &  &  & (4,-5) & (4,-6) & (4,0) & (4,6) & (4,5)  &  &  &  \\
                         &  &  &  & (5,-6) & (5,0) & (5,6) &  &  &  &  \\
                               &  &  &  &  & (6,0) &  &  &  &  & \end{array}$$
\end{example}

\noindent (C) For type $C_n$, we use the simple system $\Delta_{C_n} = \big\{\alpha_i = e_i - e_{i+1}\ |\ i \in [n-1]\big\} \sqcup \{\alpha_n = 2e_n\}$. Its suitable Young diagram $Y_{C_n}$ has shape $(2n-1, 2n-3, \dots, 1)$, and boxes $\mathbf{b}_C(i,j)$ filled with hyperplanes according to the assignment
$$\mathbf{b}_C(i,j) = \left\{ \begin{array}{ll} \alpha_i+\dots +\alpha_{j-1}+ 2(\alpha_j+\dots+\alpha_{n-1}) + \alpha_n, & 1 \leq j \leq n-1 \\
\alpha_i+\dots +\alpha_{2n-j}, & n \leq j \leq 2n-1 \end{array} \right..$$
With the adopted notation, we have
$$\mathbf{b}_C(i,j) = \left\{ \begin{array}{ll} (i,0), & 1 \leq i = j \leq n \\ 
(i,-j), & 1 \leq i < j \leq n \\ (i, 2n-j+1), & n < j \leq 2n-1 \end{array} \right..$$

\begin{example}
The shifted Young diagram $Y_{C_6}$ is
$$\begin{array}{lllllllllll}
(1,0) & (1,-2) & (1,-3) & (1,-4) & (1,-5) & (1,-6) & (1,6) & (1,5) & (1,4) & (1,3) & (1,2) \\
       & (2,0) & (2,-3) & (2,-4) & (2,-5) & (2,-6) & (2,6) & (2,5) & (2,4) & (2,3) &  \\
             &  & (3,0) & (3,-4) & (3,-5) & (3,-6) & (3,6) & (3,5) & (3,4) &  &  \\
                   &  &  & (4,0) & (4,-5) & (4,-6) & (4,6) & (4,5)  &  &  &  \\
                         &  &  &  & (5,0) & (5,-6) & (5,6) &  &  &  &  \\
                               &  &  &  &  & (6,0) &  &  &  &  & \end{array}$$
\end{example}

\noindent (D) For type $D_n$, we use the simple system $\Delta_{D_n} = \big\{\alpha_i = e_i - e_{i+1}\ |\ i \in [n-1]\big\} \sqcup \{\alpha_n = e_{n-1} + e_n\}$. Its suitable Young diagram $Y_{D_n}$ has shape $(2n-2, 2n-4, \dots, 2)$, and boxes $\mathbf{b}_D(i,j)$ filled with hyperplanes according to the assignment
$$\mathbf{b}_D(i,j) = \left\{ \begin{array}{ll} \big( \alpha_i+\dots +\alpha_j+ 2(\alpha_{j+1}+\dots+\alpha_{n-2}) + \alpha_{n-1} + \alpha_n \big)^{\perp}, & 1 \leq j \leq n-2 \\
(\alpha_i+\dots +\alpha_{n-2} + \alpha_n)^{\perp}, & j = n-1 \\
(\alpha_i+\dots +\alpha_{2n-j})^{\perp}, & n \leq j \leq 2n-1 \end{array} \right..$$
With the adopted notation, we have
$$\mathbf{b}_D(i,j) = \left\{ \begin{array}{ll} \big(i,-(j+1)\big), & 1 \leq i \leq j \leq n-1 \\
(i, 2n-j), & n-1 < j \leq 2n-2 \end{array} \right..$$

\begin{example}
The shifted Young diagram $Y_{D_6}$ is
$$\begin{array}{llllllllll}
(1,-2) & (1,-3) & (1,-4) & (1,-5) & (1,-6) & (1,6) & (1,5) & (1,4) & (1,3) & (1,2) \\
       & (2,-3) & (2,-4) & (2,-5) & (2,-6) & (2,6) & (2,5) & (2,4) & (2,3) &  \\
             &  & (3,-4) & (3,-5) & (3,-6) & (3,6) & (3,5) & (3,4) &  &  \\
                   &  &  & (4,-5) & (4,-6) & (4,6) & (4,5)  &  &  &  \\
                         &  &  &  & (5,-6) & (5,6) &  &  &  &  \end{array}$$
\end{example}

\section{Partition in Accordance} \label{SePa}

\noindent The partition in accordance with an ideal complement is a partition from which we compute the coboundary polynomial of this ideal. Each box of a shifted Young diagram we consider represents now a hyperplane $(i,j)$. If $(i,j)$ corresponds to the box $\mathbf{b}(u,v)$, we use $(i,j)$ instead of $\mathbf{b}(u,v)$, and write $\mathbf{B}(i,j)$ for the box set $\mathbf{B}_{u,v}$ to emphasize the hyperplane context. Moreover, since a box set is a hyperplane arrangement, we can consider an ideal arrangement as a subset of $Y_{X_m}$, where $X_m$ is any type in $\{A_{n-1}, B_n, C_n, D_n\}$. Remark that, since the coboundary polynomial of an open set is equal the product of the coboundary polynomials associated to its connected components, we just need to consider the full connected open sets or, equivalently, the full connected ideal arrangements.

\begin{definition}
Take a full connected ideal arrangement $\mathcal{A}_I$ of $\Phi_{X_m}$, with generating boxes $G_I = \big((i_l,j_l)\big)_{l \in [k]}$. The signature $s_I(i)$ of an integer $i$ in accordance with $I$ is 
$$s_I(i) := \big\{l \in [k]\ |\ \text{$i$ appears in at least one box of $\mathbf{B}(i_l,j_l)$}\big\}.$$
\end{definition}

\noindent Define the integer set of $I$ by $\mathbb{I}_I := \{i \in \mathbb{Z}\ |\ \text{$i$ appears in at least one box of $\mathcal{A}_I$}\}$.

\paragraph*{Algorithm P.} We transform $[m]$ into the partition $A^{(1)}| \dots |A^{(r)}|B^{(1)}| \dots |B^{(s)}$ in accordance with an ideal $I$.\\
\textbf{P-1.} We partition $[m]$ in $A|B$ such that $$\forall i \in A:\, -i \notin \mathbb{I}_I \quad \text{and} \quad \forall i \in B:\, -i \in \mathbb{I}_I.$$
\textbf{P-2.} We partition $A$ in $A^{(1)}| \dots |A^{(r)}$ such that
\begin{itemize}
\item $\forall i \in [r],\, \forall u,v \in A^{(i)},\, s_I(u) = s_I(v)$,
\item $\forall i, j \in [r],\, \forall u \in A^{(i)},\, \forall v \in A^{(j)},\, i \neq j \Rightarrow s_I(u) \neq s_I(v)$.
\end{itemize}
\textbf{P-3.} We partition $B$ in $B^{(1)}| \dots |B^{(s)}$ such that
\begin{itemize}
\item $\forall i \in [s],\, \forall u,v \in B^{(i)},\, s_I(-u) = s_I(-v)$,
\item $\forall i, j \in [s],\, \forall u \in B^{(i)},\, \forall v \in B^{(j)},\, i \neq j \Rightarrow s_I(-u) \neq s_I(-v)$.
\end{itemize}

\begin{definition}
Let $\mathcal{A}_I$ be a full connected ideal arrangement of $\Phi_{X_m}$. With the notation of Algorithm P, the partition $P_I$ of $[m]$ in accordance with $I$ is $P_I := A^{(1)}| \dots |A^{(r)}|B^{(1)}| \dots |B^{(s)}$. 
\end{definition}

\noindent (A) Let $\mathbf{B}_{A_{n-1}}$ be the box set of the diagram $Y_{A_{n-1}}$. The box set generated by a hyperplane $(i,j)$ of $\mathbf{B}_{A_{n-1}}$ is
$$\mathbf{B}_{A_{n-1}}(i,j) = \big\{(u,v)\ |\ i \leq u < v \leq j\big\}.$$
Let $\mathcal{A}_I$ be a full connected ideal arrangement of $\Phi_{A_{n-1}}$, $G_I = \big((i_l,j_l)\big)_{l \in [k]}$, and $u$ a nonnegative integer. Then,
$$s_I(u) = \{l \in [k]\ |\ i_l \leq u \leq j_l\}.$$

\begin{example} \label{ExA}
The ideal arrangement $\mathcal{A}_{I_a}$ of $\Phi_{A_7}$, with $G_{I_a} = \big((1,3),(2,5),(4,7),(6,8)\big)$, is    
$$\mathcal{A}_{I_a} = \begin{array}{lllllll}
 &  &  &  &  & (1,3) & (1,2) \\
 &  &  & (2,5) & (2,4) & (2,3) &  \\
 &  &  & (3,5) & (3,4) &  &  \\
 & (4,7) & (4,6) & (4,5) &  &  &  \\
 & (5,7) & (5,6) &  &  &  &  \\
(6,8) & (6,7) &  &  &  &  &  \\
(7,8) &  &  &  &  &  &  \end{array}$$

\noindent The signatures in accordance with $I_a$ are

\begin{center}
    \begin{tabular}{c|cccccccc}
$\mathbf{i}$ & $1$ & $2$ & $3$ & $4$ & $5$ & $6$ & $7$ & $8$ \\ \hline    
$\mathbf{s_{I_a}(i)}$ & $\{1\}$ & $\{1,2\}$ & $\{1,2\}$ & $\{2,3\}$ & $\{2,3\}$ & $\{3,4\}$ & $\{3,4\}$ & $\{4\}$     
    \end{tabular}
\end{center} 

\noindent The partition of $[8]$ in accordance with $I_a$ is $P_{I_a} = \{1\}|\{2,3\}|\{4,5\}|\{6,7\}|\{8\}$.
\end{example}

\begin{lemma} \label{LeSA}
Let $\mathcal{A}_I$ be a full connected ideal arrangement of $\Phi_{A_{n-1}}$ with associated partition $A^{(1)}| \dots |A^{(r)}$. Then,
\begin{align*}
& \mathcal{A}_I = \bigsqcup_{i=1}^r \binom{A^{(i)}}{2} \, \sqcup \, \big( A^{(i)} \times \bigsqcup_{j \in R^{(i)}} A^{(j)} \big) \\
& \text{with} \quad R^{(i)} = \big\{j \in \{i+1, \dots, r\}\ |\ s_I(A^{(i)}) \, \cap \, s_I(A^{(j)}) \neq \emptyset \big\}.
\end{align*}
\end{lemma}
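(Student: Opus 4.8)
The plan is to unwind the definitions and show that each hyperplane of $\mathcal{A}_I$ lands in exactly one of the claimed blocks. Recall from part (A) above that a box set $\mathbf{B}_{A_{n-1}}(i,j)$ equals $\{(u,v)\mid i\le u<v\le j\}$, so the full connected ideal arrangement $\mathcal{A}_I = \bigcup_{l\in[k]} \mathbf{B}_{A_{n-1}}(i_l,j_l)$ consists precisely of those hyperplanes $(u,v)$, $1\le u<v\le n$, for which there exists $l$ with $i_l\le u<v\le j_l$. The first step is to translate this membership condition into signature language: a hyperplane $(u,v)$ with $u<v$ lies in $\mathcal{A}_I$ if and only if $s_I(u)\cap s_I(v)\neq\emptyset$, because $l\in s_I(u)\cap s_I(v)$ says $i_l\le u$ and $v\le j_l$, which (together with $u<v$) is exactly $i_l\le u<v\le j_l$. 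Here I use that $\mathcal{A}_I$ is an ideal arrangement of type $A_{n-1}$, so the only hyperplanes around are the $(u,v)$, and that the partition $A^{(1)}|\dots|A^{(r)}$ from Algorithm P (with $B=\emptyset$ since no negatives occur in type $A$) groups integers by equal signature.

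Next I would fix the block structure. For $u,v$ in the same block $A^{(i)}$ we have $s_I(u)=s_I(v)\neq\emptyset$ (the signature of an integer appearing in a connected nonempty arrangement is nonempty), so $(u,v)\in\mathcal{A}_I$; these are exactly the $\binom{A^{(i)}}{2}$ pairs. For $u\in A^{(i)}$ and $v\in A^{(j)}$ with $i\neq j$, the hyperplane $(u,v)$ (ordered so the smaller index comes first — one should check the blocks are intervals of $[n]$ in the natural order, which follows from the nesting $i_1<\dots$ and $j_1>\dots$ of the generating boxes, so $i<j$ forces every element of $A^{(i)}$ to be smaller than every element of $A^{(j)}$) lies in $\mathcal{A}_I$ iff $s_I(u)\cap s_I(v)=s_I(A^{(i)})\cap s_I(A^{(j)})\neq\emptyset$, i.e.\ iff $j\in R^{(i)}$. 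Collecting the two cases over all $i$ and all $j\in R^{(i)}$ gives the claimed union, and disjointness is immediate since a pair $\{u,v\}$ determines uniquely whether $u,v$ lie in the same block or in two different ones, and in the latter case which two.

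The main obstacle I anticipate is the bookkeeping around ordering: verifying that the blocks $A^{(i)}$ are genuine intervals of $[n]$ compatibly ordered with the index $i$, so that writing $A^{(i)}\times A^{(j)}$ for $j\in R^{(i)}$ (rather than an unordered product) correctly records each hyperplane exactly once. This rests on the shape of $Y_{A_{n-1}}$ and the defining monotonicity $l_i\le l_{i+1}$, $r_i\ge r_{i+1}$ of a shifted Young diagram together with the description $\mathbf{B}_{A_{n-1}}(i_l,j_l)=\{(u,v)\mid i_l\le u<v\le j_l\}$; once one observes that consecutive generating boxes of a \emph{connected} open set have overlapping index ranges, the signature is a monotone-then-monotone function of the integer, so level sets are intervals appearing in order. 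After that, the rest is the routine double-counting sketched above, and the identity follows.
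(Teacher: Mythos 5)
Your proof is correct and follows essentially the same route as the paper's: establish that a pair $(u,v)$ lies in $\mathcal{A}_I$ precisely when $s_I(u)\cap s_I(v)\neq\emptyset$, then classify pairs by which blocks $u,v$ fall in. The paper states this directly as a double inclusion without pausing on fullness or the ordering of the blocks; you make those implicit points explicit (and the one small slip — in hyperplane coordinates the second components $j_1,j_2,\dots$ of the generating boxes are increasing, not decreasing, though the conclusion that the blocks are order-compatible intervals still holds), but the underlying argument is the same.
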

 
\begin{proof}
Let $G_I = \big((i_l,j_l)\big)_{l \in [k]}$:
\begin{itemize}
\item If $l \in s_I(A^{(i)})$, then $\binom{A^{(i)}}{2} \subseteq \mathbf{B}_{A_{n-1}}(i_l,j_l)$.
\item And if $l \in s_I(A^{(i)}) \cap s_I(A^{(j)})$, then $A^{(i)} \times A^{(j)} \subseteq \mathbf{B}_{A_{n-1}}(i_l,j_l)$.
\end{itemize}
Therefore, $$\bigsqcup_{i=1}^r \binom{A^{(i)}}{2} \, \sqcup \, \big( A^{(i)} \times \bigsqcup_{j \in R^{(i)}} A^{(j)} \big) \subseteq \mathcal{A}_I.$$
Now, take $(x,y) \in \mathcal{A}_I$, which means $(x,y) \in \mathbf{B}_{A_{n-1}}(i_l,j_l)$ for $l \in [k]$. There are $i,j \in [r]$ such that $x \in A^{(i)}$, and $y \in A^{(j)}$:
\begin{itemize}
\item If $i=j$, then $(x,y) \in \binom{A^{(i)}}{2}$.
\item If $i \neq j$, since $s_I(A^{(i)}) \cap s_I(A^{(j)}) \neq \emptyset$, then $(x,y) \in A^{(i)} \times A^{(j)}$.
\end{itemize}
\end{proof}

\noindent (B) Define the linear order $\prec_b$ on $\{-n, \dots, -1, 0, 1, \dots, n\}$ by
$$1 \prec_b 2 \prec_b \dots n \prec_b 0 \prec_b -n \prec_b \dots \prec_b -2 \prec_b -1.$$
Consider a box $(i,j)$ of $Y_{B_n}$. The box subset $\mathbf{B}_{B_n}(i,j)$ generated by $(i,j)$ is
$$\mathbf{B}_{B_n}(i,j) = \big\{(u,v)\ |\ i \preceq_b u \preceq_b n,\, u \prec_b v \preceq_b j\big\}.$$
Let $\mathcal{A}_I$ be a full connected ideal arrangement of $\Phi_{B_n}$, $G_I = \big((i_l,j_l)\big)_{l \in [k]}$, and $u$ a nonnegative integer. Then, $$s_I(u) = \{l \in [k]\ |\ i_l \preceq_b u \preceq_b j_l\}.$$

\begin{example} \label{ExB}
The ideal arrangement $\mathcal{A}_{I_b}$ of $\Phi_{B_6}$, such that $G_{I_b} = \big\{(1,4),(2,0),(4,-5)\big\}$, is
$$\mathcal{A}_{I_b} = \begin{array}{llllllll}
                              &  &  &  &  & (1,4) & (1,3) & (1,2) \\
               &  & (2,0) & (2,6) & (2,5) & (2,4) & (2,3) &  \\
               &  & (3,0) & (3,6) & (3,5) & (3,4) &  &  \\
  (4,-5) & (4,-6) & (4,0) & (4,6) & (4,5)  &  &  &  \\
         & (5,-6) & (5,0) & (5,6) &  &  &  &  \\
               &  & (6,0) &  &  &  &  &  \end{array}$$

\noindent The signatures in accordance with $I_b$ are

\begin{center}
    \begin{tabular}{c|ccccccccc}
$\mathbf{i}$ & $1$ & $2$ & $3$ & $4$ & $5$ & $6$ & $0$ & $-6$ & $-5$ \\ \hline   
$\mathbf{s_{I_b}(i)}$ & $\{1\}$ & $\{1,2\}$ & $\{1,2\}$ & $\{1,2,3\}$ & $\{2,3\}$ & $\{2,3\}$ & $\{2,3\}$ & $\{3\}$ & $\{3\}$
    \end{tabular}
    \end{center}

\noindent The partition of $[6]$ according to $I_b$ is $P_{I_b} = \{1\}|\{2,3\}|\{4\}|\{5,6\}$.
\end{example}

\noindent Let $B$ be a subset of $\mathbb{N}^*$. Define
$$\tilde{B} := \{\pm i\ |\ i \in B\},\ \text{and}\ \begin{Bmatrix} \tilde{B} \\ 2 \end{Bmatrix} := \big\{ (i, \pm j)\ |\ i,j \in B,\, i<j \big\}.$$

\begin{lemma} \label{LeB}
Take a full connected ideal arrangement $\mathcal{A}_I$ of $\Phi_{B_n}$ with associated partition $A^{(1)}| \dots |A^{(r)}|B^{(1)}| \dots |B^{(s)}$. Let,
\begin{align*}
\bullet \ \mathcal{A}_{I_A} = & \ \bigsqcup_{i=1}^r \binom{A^{(i)}}{2} \, \sqcup \, \big( A^{(i)} \times \bigsqcup_{l \in R^{(i)}} A^{(l)} \big) \\
& \qquad \text{with} \quad R^{(i)} = \big\{l \in \{i+1, \dots, r\}\ |\ s_I(A^{(i)}) \cap s_I(A^{(l)}) \neq \emptyset \big\},\\
\bullet \ \mathcal{A}_{I_B} = & \ \bigsqcup_{j=1}^s \begin{Bmatrix} \tilde{B}^{(j)} \\ 2 \end{Bmatrix} \, \sqcup \, \big( \bigsqcup_{l \in R_A^{(j)}} A^{(l)} \sqcup \bigsqcup_{h \in S^{(j)}} B^{(h)} \big) \times \tilde{B}^{(j)} \\
& \qquad \text{with} \quad R_A^{(j)} = \big\{l \in [r]\ |\ s_I(B^{(j)}) \cap s_I(A^{(l)}) \neq \emptyset \big\} \\
& \qquad \text{and} \quad S^{(j)} = \big\{h \in [j-1]\ |\ s_I(B^{(j)}) \cap s_I(B^{(h)}) \neq \emptyset \big\}, \\
\bullet \ \mathcal{A}_{I_0} = & \ \big( \bigsqcup_{l \in R_0} A^{(l)} \sqcup \bigsqcup_{h \in S_0} B^{(h)} \big) \times \{0\} \\
& \qquad \text{with} \quad R_0 = \big\{l \in [r]\ |\ s_I(0) \cap s_I(A^{(l)}) \neq \emptyset \big\} \\
& \qquad \text{and} \quad S_0 = \big\{h \in [s]\ |\ s_I(0) \cap s_I(B^{(h)}) \neq \emptyset \big\}.
\end{align*}
Then $\mathcal{A}_I = \mathcal{A}_{I_A} \sqcup \mathcal{A}_{I_B} \sqcup \mathcal{A}_{I_0}$.
\end{lemma}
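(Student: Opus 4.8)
The plan is to generalize the argument of Lemma~\ref{LeSA} to the type $B_n$ diagram, treating the three kinds of boxes $(u,v)$ of $Y_{B_n}$—namely $v>0$, $v=0$, and $v<0$—separately, and showing that each contributes exactly one of the three pieces $\mathcal{A}_{I_A}$, $\mathcal{A}_{I_0}$, $\mathcal{A}_{I_B}$. First I would fix generating boxes $G_I=\big((i_l,j_l)\big)_{l\in[k]}$ and recall the explicit description $\mathbf{B}_{B_n}(i,j)=\big\{(u,v)\ |\ i\preceq_b u\preceq_b n,\ u\prec_b v\preceq_b j\big\}$. The key bookkeeping observation is that, by Step~P-1 of Algorithm~P, an index $i\in[n]$ lies in some $A^{(p)}$ exactly when $-i\notin\mathbb{I}_I$ (so no box $(i,-i')$ or $(i',-i)$ appears in $\mathcal{A}_I$), and lies in some $B^{(p)}$ otherwise; and by Steps~P-2, P-3 the parts $A^{(p)}$, $B^{(p)}$ are precisely the equivalence classes of indices sharing a common signature, so membership of a hyperplane in $\mathcal{A}_I$ depends only on the parts containing its indices.

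The containment $\mathcal{A}_{I_A}\sqcup\mathcal{A}_{I_B}\sqcup\mathcal{A}_{I_0}\subseteq\mathcal{A}_I$ I would verify piece by piece, as in Lemma~\ref{LeSA}. For $\mathcal{A}_{I_A}$ the argument is verbatim the type $A$ one: if $l\in s_I(A^{(p)})$ then $\binom{A^{(p)}}{2}\subseteq\mathbf{B}_{B_n}(i_l,j_l)$, and if $l\in s_I(A^{(p)})\cap s_I(A^{(q)})$ with $p<q$ then $A^{(p)}\times A^{(q)}\subseteq\mathbf{B}_{B_n}(i_l,j_l)$, using that all indices in an $A$-part are positive and $\preceq_b$ restricts to the usual order on positives. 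For $\mathcal{A}_{I_0}$ one notes that the hyperplane $(i,0)$ lies in $\mathbf{B}_{B_n}(i_l,j_l)$ iff $i_l\preceq_b i\preceq_b n$ and $0\preceq_b j_l$, i.e.\ iff $l\in s_I(i)\cap s_I(0)$; hence $R_0$, $S_0$ capture exactly the parts all of whose indices index such a hyperplane. For $\mathcal{A}_{I_B}$ the new ingredient is that a box $(i,-j)$ with $i<j$ requires $j\in B$ (so $-j\in\mathbb{I}_I$) and that it appears in $\mathbf{B}_{B_n}(i_l,j_l)$ iff $i_l\preceq_b i$ and $-j\preceq_b j_l$, i.e.\ iff $l\in s_I(i)\cap s_I(-j)$; combined with the positive box $(i,j)$ needing $l\in s_I(i)\cap s_I(j)$ and the observation $s_I(j)\supseteq s_I(-j)$ (any box containing $-j$ in row $i\preceq_b n$ also contains $j$, since $j\prec_b 0\prec_b -j$), one gets that $\begin{Bmatrix}\tilde B^{(p)}\\2\end{Bmatrix}\subseteq\mathcal{A}_I$ whenever the signature of $B^{(p)}$ (in the $-$ sense) is nonempty, and the cross terms with $A^{(l)}$, $l\in R_A^{(p)}$, and $B^{(h)}$, $h\in S^{(p)}$, are handled identically.

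For the reverse containment I would take an arbitrary hyperplane $(x,y)\in\mathcal{A}_I$, so $(x,y)\in\mathbf{B}_{B_n}(i_l,j_l)$ for some $l$, and split on the sign of $y$. If $y>0$ then both $x,y$ have $-x,-y\notin\mathbb{I}_I$ only when no negative box forces them into $B$—here one must be slightly careful: $x,y$ could a priori lie in $B$-parts even though this particular hyperplane is ``positive''. This is the one place the disjointness of the decomposition and the precise definition of $\tilde B^{(p)}$ must be used: a positive hyperplane $(x,y)$ with $x$ or $y$ in some $B$-part is already an element of the corresponding $\begin{Bmatrix}\tilde B^{(p)}\\2\end{Bmatrix}$ or cross-term of $\mathcal{A}_{I_B}$. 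If $y=0$, then $x$ lies in a part, $A$ or $B$, whose signature meets $s_I(0)$, so $x$ belongs to some $A^{(l)}$ with $l\in R_0$ or $B^{(h)}$ with $h\in S_0$, putting $(x,0)\in\mathcal{A}_{I_0}$. If $y=-y'<0$, then $y'\in\mathbb{I}_I$ forces $y'$ into some $B$-part, and $x$ into a part (of either kind) whose signature meets $s_I(-y')=s_I(y'$ in the minus sense$)$, giving $(x,-y')\in\mathcal{A}_{I_B}$. The main obstacle—and the part deserving the most care—is precisely this interplay in the $y>0$ case: ensuring that the partition of hyperplanes into $A$-type, $B$-type, and $0$-type is genuinely a \emph{disjoint} union, i.e.\ that every hyperplane of $\mathcal{A}_I$ lands in exactly one piece, which comes down to checking that the index sets $A$, $B$ and the special value $0$ are disjoint by construction (Step~P-1) and that $\begin{Bmatrix}\tilde B^{(p)}\\2\end{Bmatrix}$ absorbs \emph{all} hyperplanes—positive and negative—with both indices of absolute value in $B^{(p)}$. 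Once the case analysis is organized this way, each verification is a short unwinding of $\preceq_b$ and the definitions of $s_I$, $R^{(i)}$, $R_A^{(j)}$, $S^{(j)}$, $R_0$, $S_0$.
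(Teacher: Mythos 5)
Your proposal takes essentially the same two-inclusion approach as the paper's proof: for the forward direction you verify piece by piece that $\mathcal{A}_{I_A}$, $\mathcal{A}_{I_B}$, $\mathcal{A}_{I_0}$ each land in $\mathbf{B}_{B_n}(i_l,j_l)$ for an appropriate generator $l$, and for the reverse direction you classify an arbitrary $(x,y)\in\mathcal{A}_I$ by the sign of $y$ and the parts containing $|x|,|y|$, exactly as the paper does. Your added commentary on the $y>0$ subcase where $x$ or $y$ lies in a $B$-part makes explicit a step the paper dispatches with ``the proof is analogous for the other cases''; the only thing to flag is that in that subcase the conclusion $(x,y)\in A^{(i)}\times\tilde B^{(j)}$ hinges on $i\in R_A^{(j)}$, i.e.\ on $s_I(A^{(i)})\cap s_I(B^{(j)})\neq\emptyset$, and since $R_A^{(j)}$ is defined via the signature used in Step~P-3 (the negative one) while the hyperplane $(x,y)$ only directly yields a common index in the positive signatures of $x$ and $y$, this inference deserves a short justification — a gap that is present in the paper's proof just as much as in yours, so I regard your proposal as faithful to the source argument.
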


\begin{proof}
Let $G_I = \big((i_l,j_l)\big)_{l \in [k]}$:
\begin{itemize}
\item If $l \in s_I(A^{(i)})\ \text{resp.}\ s_I(B^{(j)})$, then $\begin{pmatrix} A^{(i)} \\ 2 \end{pmatrix}\ \text{resp.}\  \begin{Bmatrix} \tilde{B}^{(j)} \\ 2 \end{Bmatrix} \subseteq \mathbf{B}_{B_n}(i_l,j_l)$.
\item If $l \in s_I(A^{(i)}) \cap s_I(A^{(l)})$, then $A^{(i)} \times A^{(l)} \subseteq \mathbf{B}_{B_n}(i_l,j_l)$.
\item If $l \in s_I(A^{(i)}) \cap s_I(B^{(j)})$, then $A^{(i)} \times \tilde{B}^{(j)} \subseteq \mathbf{B}_{B_n}(i_l,j_l)$.
\item If $l \in s_I(B^{(h)}) \cap s_I(B^{(j)})$, $i<j$,  then $B^{(h)} \times \tilde{B}^{(j)} \subseteq \mathbf{B}_{B_n}(i_l,j_l)$.
\item If $l \in s_I(A^{(i)}) \cap s_I(0)\ \text{resp.}\ s_I(B^{(j)}) \cap s_I(0)$, then $A^{(i)} \times \{0\}\ \text{resp.}\ B^{(j)} \times \{0\} \subseteq \mathbf{B}_{B_n}(i_l,j_l)$.
\end{itemize}
So, $\mathcal{A}_{I_A} \sqcup \mathcal{A}_{I_B} \sqcup \mathcal{A}_{I_0} \subseteq \mathcal{A}_I$.\\
Now, take $(x,y) \in \mathcal{A}_I$, which means $(x,y) \in \mathbf{B}_{B_n}(i_l,j_l)$ for $l \in [k]$. If $y \neq 0$, then there are $i,j \in [r] \cup [s]$ such that $x \in A^{(i)} \cup B^{(i)}$, and $y \in A^{(j)} \cup B^{(j)}$:
\begin{itemize}
\item If $i=j$, then $(x,y) \in \begin{pmatrix} A^{(i)} \\ 2 \end{pmatrix} \cup \begin{Bmatrix} \tilde{B}^{(i)} \\ 2 \end{Bmatrix}$.
\item If $i \neq j$, suppose for example that $x \in A^{(i)}$, and $y \in B^{(j)}$. Since $s_I(A^{(i)}) \cap s_I(B^{(j)}) \neq \emptyset$, then $(x,y) \in A^{(i)} \times \tilde{B}^{(j)}$. The proof is analogous for the other cases.
\end{itemize}
\end{proof}

\noindent (C) Define the linear order $\prec_c$ on $\{-n, \dots, -1, 1, \dots, n\}$ by
$$1 \prec_c 2 \prec_c \dots n \prec_c -n \prec_c \dots \prec_c -2 \prec_c -1.$$
Consider a box $(i,j)$ of $Y_{C_n}$. The box subset $\mathbf{B}_{C_n}(i,j)$ generated by $(i,j)$ is
$$\mathbf{B}_{C_n}(i,j) := \left\{ \begin{array}{ll} 
\big\{(u,v)\ |\ i \preceq_c u \prec_c v \preceq_c j\big\} & \text{if}\ j>0, \\
\big\{(u,v)\ |\ i \preceq_c u \preceq_c n-1,\, u \prec_c v \preceq_c j\big\} \sqcup \big\{(u,0)\ |\ -j \preceq_c u \preceq_c n\big\} & \text{if}\ j<0, \\
Y_{C_n}\big(i,-(i+1)\big) \sqcup \big\{(u,0)\ |\ i \preceq_c u \preceq_c n\big\} & \text{if}\ j=0.
\end{array} \right.$$
Let $\mathcal{A}_I$ be a full connected ideal arrangement of $\Phi_{C_n}$, $G_I = \big((i_l,j_l)\big)_{l \in [k]}$, and $u$ a nonnegative integer. Then,
$$s_I(x) = \left\{ \begin{array}{ll}
\{l \in [k]\ |\ i_l \preceq_c i \preceq_c j_l\} & \text{if}\ x \neq 0, \\
\{l \in [k]\ |\ j_l \leq 0\} & \text{if}\ x=0.
\end{array} \right.$$

\begin{example} \label{ExC}
The ideal arrangement $\mathcal{A}_{I_c}$ of $\Phi_{C_6}$, such that $G_{I_c} = \big\{(1,4),(2,-6),(4,0)\big\}$, is
$$\mathcal{A}_{I_c} = \begin{array}{lllllllllll}
                            &  &  &  &  & (1,4) & (1,3) & (1,2) \\
            &  & (2,-6) & (2,6) & (2,5) & (2,4) & (2,3) &  \\
            &  & (3,-6) & (3,6) & (3,5) & (3,4) &  &  \\
(4,0) & (4,-5) & (4,-6) & (4,6) & (4,5)  &  &  &  \\
       & (5,0) & (5,-6) & (5,6) &  &  &  &  \\
             &  & (6,0) &  &  &  &  &  \end{array}$$

\noindent The signatures in accordance with $I_c$ are

\begin{center}
    \begin{tabular}{c|ccccccccc}
$\mathbf{i}$ & $1$ & $2$ & $3$ & $4$ & $5$ & $6$ & $-6$ & $-5$ & $0$ \\ \hline   
$\mathbf{s_{I_c}(i)}$ & $\{1\}$ & $\{1,2\}$ & $\{1,2\}$ & $\{1,2,3\}$ & $\{2,3\}$ & $\{2,3\}$ & $\{2,3\}$ & $\{3\}$ & $\{2,3\}$ 
    \end{tabular}
    \end{center}

\noindent The partition of $[6]$ in accordance with $I_c$ is $P_{I_c} = \{1\}|\{2,3\}|\{4\}|\{5\}|\{6\}$.
\end{example}

\begin{lemma} \label{LeC}
Take a full connected ideal arrangement $\mathcal{A}_I$ of $\Phi_{C_n}$ with associated partition $A^{(1)}| \dots |A^{(r)}|B^{(1)}| \dots |B^{(s)}$. Let,
\begin{align*}
\bullet \ \mathcal{A}_{I_A} = & \ \bigsqcup_{i=1}^r \binom{A^{(i)}}{2} \, \sqcup \, \big( A^{(i)} \times \bigsqcup_{l \in R^{(i)}} A^{(l)} \big) \\
& \qquad \text{with} \quad R^{(i)} = \big\{l \in \{i+1, \dots, r\}\ |\ s_I(A^{(i)}) \cap s_I(A^{(l)}) \neq \emptyset \big\},\\
\bullet \ \mathcal{A}_{I_B} = & \ \bigsqcup_{j=1}^s \begin{Bmatrix} \tilde{B}^{(j)} \\ 2 \end{Bmatrix} \, \sqcup \, \big( \bigsqcup_{l \in R_A^{(j)}} A^{(l)} \sqcup \bigsqcup_{h \in S^{(j)}} B^{(h)} \big) \times \tilde{B}^{(j)} \\
& \qquad \text{with} \quad R_A^{(j)} = \big\{l \in [r]\ |\ s_I(B^{(j)}) \cap s_I(A^{(l)}) \neq \emptyset \big\} \\
& \qquad \text{and} \quad S^{(j)} = \big\{h \in [j-1]\ |\ s_I(B^{(j)}) \cap s_I(B^{(h)}) \neq \emptyset \big\}, \\
\bullet \ \mathcal{A}_{I_0} = & \ \big( \bigsqcup_{l \in R_0} A^{(l)} \sqcup \bigsqcup_{h \in S_0} B^{(h)} \big) \times \{0\} \\
& \qquad \text{with} \quad R_0 = \big\{l \in [r]\ |\ s_I(0) \cap s_I(A^{(l)}) \neq \emptyset \big\} \\
& \qquad \text{and} \quad S_0 = \big\{h \in [s]\ |\ s_I(0) \cap s_I(B^{(h)}) \neq \emptyset \big\}.
\end{align*}
Then $\mathcal{A}_I = \mathcal{A}_{I_A} \sqcup \mathcal{A}_{I_B} \sqcup \mathcal{A}_{I_0}$.
\end{lemma}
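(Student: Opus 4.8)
The plan is to follow the proof of Lemma~\ref{LeB} essentially verbatim: the statement to be proved is formally the same, and the only ingredient that changes is the explicit shape of the generated box sets, namely $\mathbf{B}_{C_n}(i,j)$ in place of $\mathbf{B}_{B_n}(i,j)$, together with the order $\prec_c$ in place of $\prec_b$. Writing $G_I=\big((i_l,j_l)\big)_{l\in[k]}$, I would first prove $\mathcal{A}_{I_A}\sqcup\mathcal{A}_{I_B}\sqcup\mathcal{A}_{I_0}\subseteq\mathcal{A}_I$ by the same box-by-box inspection as in Lemma~\ref{LeB}: for an index $l$ lying in a single signature $s_I(A^{(i)})$ resp. $s_I(B^{(j)})$ one checks that $\binom{A^{(i)}}{2}$ resp. $\begin{Bmatrix}\tilde{B}^{(j)}\\2\end{Bmatrix}$ is contained in $\mathbf{B}_{C_n}(i_l,j_l)$; for $l$ lying in the common signature of two distinct blocks one checks that the corresponding product of blocks is contained in $\mathbf{B}_{C_n}(i_l,j_l)$; and for $l\in s_I(\,\cdot\,)\cap s_I(0)$ one checks that the corresponding $(\text{block})\times\{0\}$ is contained in $\mathbf{B}_{C_n}(i_l,j_l)$. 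Each of these is read off from the appropriate branch of the three-case definition of $\mathbf{B}_{C_n}(i,j)$, with the branches $j>0$, $j<0$, $j=0$ now doing the work that a single formula did in the $\Phi_{B_n}$ case.

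The place where the $\Phi_{C_n}$ situation genuinely differs is the $0$-column, because $\prec_c$ --- unlike $\prec_b$ --- does not contain $0$ at all, so both $\mathbf{B}_{C_n}(i,j)$ and $s_I$ treat the symbol $0$ by separate clauses; I would check their compatibility. By definition $s_I(0)=\{l\ |\ j_l\leq 0\}$, and indeed the boxes $(u,0)$ occur in $\mathbf{B}_{C_n}(i_l,j_l)$ only in the branches $j_l<0$ and $j_l=0$, i.e. precisely when $j_l\leq 0$; moreover in those branches the $(u,0)$ run through exactly the rows $u$ with $i_l\preceq_c u\preceq_c n$, so $l\in s_I(x)$ together with $l\in s_I(0)$ does force $(x,0)\in\mathbf{B}_{C_n}(i_l,j_l)$. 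With this in hand the reverse inclusion goes as in Lemma~\ref{LeB}: given $(x,y)\in\mathcal{A}_I$, so $(x,y)\in\mathbf{B}_{C_n}(i_l,j_l)$ for some $l$, one separates $y=0$ --- where $x$ sits in a unique block of the partition $A^{(1)}|\dots|A^{(r)}|B^{(1)}|\dots|B^{(s)}$ of $[n]$, the membership forces $l\in s_I(x)\cap s_I(0)$, and $(x,0)\in\mathcal{A}_{I_0}$ --- from $y\neq 0$ --- where $x$ and $|y|$ lie in blocks whose signatures meet at $l$, and the usual case split on the types of those blocks places $(x,y)$ inside $\binom{A^{(i)}}{2}$, inside $\begin{Bmatrix}\tilde{B}^{(j)}\\2\end{Bmatrix}$, or inside one of the products defining $\mathcal{A}_{I_A}$ or $\mathcal{A}_{I_B}$. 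Disjointness of the three pieces is then immediate, since the block in which $|y|$ lies is of type $A$, of type $B$, or is the symbol $0$, exactly in the three cases.

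I expect the only real obstacle to be this $0$-column bookkeeping: one has to be sure that the ad hoc clauses defining $\mathbf{B}_{C_n}(i,j)$ and $s_I$ on $0$ are mutually consistent --- in particular that $0$ ``sees'' a generating box $(i_l,j_l)$ precisely when $j_l\leq 0$, and that the row range of the $(u,0)$ boxes inside $\mathbf{B}_{C_n}(i_l,j_l)$ is what the signature predicts --- so that the argument given for $\Phi_{B_n}$ transports without gaps; the remainder is a routine transcription of the proof of Lemma~\ref{LeB}.
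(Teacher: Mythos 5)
Your overall plan is the same as the paper's (the paper's entire proof reads ``It is analogous to the proof of Lemma~\ref{LeB}''), and you are right that the genuine content of the analogy is the $0$-column bookkeeping: the order $\prec_c$ does not contain the symbol $0$, so both $\mathbf{B}_{C_n}(i,j)$ and $s_I$ treat it by separate clauses, and one has to check these are compatible. However, the check you supply is wrong at exactly this point. You assert that in the branches $j_l<0$ and $j_l=0$ ``the $(u,0)$ run through exactly the rows $u$ with $i_l\preceq_c u\preceq_c n$.'' That is true only in the $j_l=0$ branch. In the $j_l<0$ branch the definition reads
$$\mathbf{B}_{C_n}(i_l,j_l)\supseteq\big\{(u,0)\ |\ -j_l \preceq_c u \preceq_c n\big\},$$
so the $(u,0)$ boxes start at row $-j_l$, not at row $i_l$, and since by the shape of the shifted Young diagram $-j_l>i_l$, this is a strictly smaller range. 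As a consequence your key deduction --- that $l\in s_I(x)$ together with $l\in s_I(0)$ forces $(x,0)\in\mathbf{B}_{C_n}(i_l,j_l)$ --- fails. Concretely, in Example~\ref{ExC} take the generating box $(2,-6)$, i.e. $l=2$: from the signature table $2\in s_{I_c}(5)$ and $2\in s_{I_c}(0)$, yet the only $(u,0)$ box in $\mathbf{B}_{C_6}(2,-6)$ is $(6,0)$, so $(5,0)\notin\mathbf{B}_{C_6}(2,-6)$. Your forward-inclusion argument therefore does not close for the $\{0\}$-column part when $j_l<0$; at minimum one must instead argue that membership is witnessed by a possibly different generating box $l'$ with $j_{l'}=0$, rather than by the $l$ you started from, and the proposal as written does not do this.
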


\begin{proof}
It is analogous to the proof of Lemma \ref{LeB}.
\end{proof}

\noindent (D) Consider a box $(i,j)$ of $Y_{D_n}$. The box subset $\mathbf{B}_{D_n}(i,j)$ generated by $(i,j)$ is
$$\mathbf{B}_{D_n}(i,j) = \big\{(u,v)\ |\ i \preceq_c u \preceq_c n-1,\, u \prec_c v \preceq_c j\big\}.$$
Let $\mathcal{A}_I$ be a full connected ideal arrangement of $\Phi_{D_n}$, $G_I = \big((i_l,j_l)\big)_{l \in [k]}$, and $u$ a nonnegative integer. Then,
$$s_I(u) = \{l \in [k]\ |\ i_l \preceq_c u \preceq_c j_l\}.$$

\begin{example} \label{ExD}
The ideal arrangement $\mathcal{A}_{I_d}$ of $\Phi_{D_6}$, such that $G_{I_d} = \big\{(1,3),(2,6),(4,-5)\big\}$, is
$$\mathcal{A}_{I_d} = \begin{array}{llllllllll}
                            &  &  &  &  & (1,3) & (1,2) \\
             &  & (2,6) & (2,5) & (2,4) & (2,3) &  \\
             &  & (3,6) & (3,5) & (3,4) &  &  \\
(4,-5) & (4,-6) & (4,6) & (4,5)  &  &  &  \\
       & (5,-6) & (5,6) &  &  &  &  \end{array}$$

\noindent The signatures in accordance with $\mathcal{A}_{I_d}$ are

\begin{center}
    \begin{tabular}{c|ccccccccc}
$\mathbf{i}$ & $1$ & $2$ & $3$ & $4$ & $5$ & $6$ & $-6$ & $-5$ \\ \hline   
$\mathbf{s_{I_d}(i)}$ & $\{1\}$ & $\{1,2\}$ & $\{1,2\}$ & $\{2,3\}$ & $\{2,3\}$ & $\{2,3\}$ & $\{3\}$ & $\{3\}$
    \end{tabular}
\end{center}

\noindent The partitions of $[6]$ in accordance with $I_d$ is $P_{I_d} = \{1\}|\{2,3\}|\{4\}|\{5,6\}$.
\end{example}

\begin{lemma} \label{LeD}
Take a full connected ideal arrangement $\mathcal{A}_I$ of $\Phi_{D_n}$, with associated partition $A^{(1)}| \dots |A^{(r)}|B^{(1)}| \dots |B^{(s)}$. Let,
\begin{align*}
\bullet \ \mathcal{A}_{I_A} = & \ \bigsqcup_{i=1}^r \binom{A^{(i)}}{2} \, \sqcup \, \big( A^{(i)} \times \bigsqcup_{l \in R^{(i)}} A^{(l)} \big) \\
& \qquad \text{with} \quad R^{(i)} = \big\{l \in \{i+1, \dots, r\}\ |\ s_I(A^{(i)}) \cap s_I(A^{(l)}) \neq \emptyset \big\},\\
\bullet \ \mathcal{A}_{I_B} = & \ \bigsqcup_{j=1}^s \begin{Bmatrix} \tilde{B}^{(j)} \\ 2 \end{Bmatrix} \, \sqcup \, \big( \bigsqcup_{l \in R_A^{(j)}} A^{(l)} \sqcup \bigsqcup_{h \in S^{(j)}} B^{(h)} \big) \times \tilde{B}^{(j)} \\
& \qquad \text{with} \quad R_A^{(j)} = \big\{l \in [r]\ |\ s_I(B^{(j)}) \cap s_I(A^{(l)}) \neq \emptyset \big\} \\
& \qquad \text{and} \quad S^{(j)} = \big\{h \in [j-1]\ |\ s_I(B^{(j)}) \cap s_I(B^{(h)}) \neq \emptyset \big\}.
\end{align*}
Then $\mathcal{A}_I = \mathcal{A}_{I_A} \sqcup \mathcal{A}_{I_B}$.
\end{lemma}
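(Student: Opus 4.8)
The plan is to reuse the proof of Lemma \ref{LeB} almost verbatim, exploiting the one structural simplification peculiar to type $D_n$: the root system $\Phi_{D_n}$ contains no short root $e_i$, so no box of $Y_{D_n}$ has the form $(i,0)$ and, by the explicit description $\mathbf{B}_{D_n}(i,j)=\{(u,v)\ |\ i\preceq_c u\preceq_c n-1,\ u\prec_c v\preceq_c j\}$, the tuple $(u,0)$ never belongs to any box set $\mathbf{B}_{D_n}(i_l,j_l)$. Hence the third component $\mathcal{A}_{I_0}$ occurring in Lemma \ref{LeB} is empty here, and only $\mathcal{A}_{I_A}$ and $\mathcal{A}_{I_B}$ survive. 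Throughout I would fix $G_I=\big((i_l,j_l)\big)_{l\in[k]}$ and use that $s_I(u)=\{l\in[k]\ |\ i_l\preceq_c u\preceq_c j_l\}$.

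First I would establish $\mathcal{A}_{I_A}\sqcup\mathcal{A}_{I_B}\subseteq\mathcal{A}_I$ by the same case check as in Lemma \ref{LeB}: if $l\in s_I(A^{(i)})$ then $\binom{A^{(i)}}{2}\subseteq\mathbf{B}_{D_n}(i_l,j_l)$, if $l\in s_I(B^{(j)})$ then $\begin{Bmatrix}\tilde{B}^{(j)}\\2\end{Bmatrix}\subseteq\mathbf{B}_{D_n}(i_l,j_l)$, and if $l$ lies in the signatures of two blocks then the product of those two blocks lies in $\mathbf{B}_{D_n}(i_l,j_l)$ — this accounts for the terms indexed by $R^{(i)}$, $R_A^{(j)}$ and $S^{(j)}$. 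Each inclusion is read off directly from the formula for $\mathbf{B}_{D_n}(i,j)$ together with the inequalities $i_l\preceq_c u\preceq_c j_l$ supplied by the signatures. Then I would prove the reverse inclusion: given $(x,y)\in\mathcal{A}_I$, pick $l$ with $(x,y)\in\mathbf{B}_{D_n}(i_l,j_l)$ (here $y\neq0$ automatically), locate the blocks $C,C'$ of the partition $A^{(1)}|\dots|A^{(r)}|B^{(1)}|\dots|B^{(s)}$ containing $x$ and $|y|$, note that $l\in s_I(x)\cap s_I(|y|)$ so that $s_I(C)\cap s_I(C')\neq\emptyset$, and split into the four cases (both $A$-blocks; both $B$-blocks; one of each, in either order), using that $\tilde{B}^{(j)}$ already bundles $\pm b$, to place $(x,y)$ in the atomic piece prescribed by $R^{(i)}$, $R_A^{(j)}$, or $S^{(j)}$.

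Finally the disjointness signalled by the symbols $\sqcup$ follows from the facts that the $A^{(i)}$ partition $A$ and the $B^{(j)}$ partition $B$ with $A\cap B=\emptyset$, that every hyperplane of $\mathcal{A}_{I_A}$ has both coordinates in $A\subseteq\mathbb{N}^*$ while every hyperplane of $\mathcal{A}_{I_B}$ has a coordinate in $\pm B$, and that within $\mathcal{A}_{I_A}$ and within $\mathcal{A}_{I_B}$ disjointness is immediate from the block partition and the range $S^{(j)}\subseteq[j-1]$. The step I expect to be the main obstacle is the bookkeeping in the reverse inclusion: one must assign each pair $(x,y)$ to the block pair with the correct orientation relative to the order $\prec_c$, and verify that the asymmetry between the two coordinates of a box — only the first is constrained to be $\preceq_c n-1$ — creates no mismatch, for instance when the larger index equals $n$. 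This is handled exactly as in the proof of Lemma \ref{LeB}, so in a concise write-up it suffices to observe that the argument is analogous to that of Lemma \ref{LeB} once the component $\mathcal{A}_{I_0}$ is deleted.
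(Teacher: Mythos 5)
Your proposal is correct and takes essentially the same approach as the paper, which simply remarks that the proof is analogous to that of Lemma~\ref{LeB}; you supply the one observation the paper leaves implicit, namely that $\Phi_{D_n}$ has no roots of the form $\pm e_i$, so no box $(u,0)$ occurs in $Y_{D_n}$ and the component $\mathcal{A}_{I_0}$ of the type~$B_n$/$C_n$ decomposition vanishes. (One small terminological quibble: in $D_n$ all roots have the same length, so ``short root'' is not quite the right phrase, but your intended point --- the absence of roots $\pm e_i$ --- is exactly right.)
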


\begin{proof}
It is analogous to the proof of Lemma \ref{LeB}.
\end{proof}

\section{Hyperplane Counting} \label{SeCob}

\noindent We compute the coboundary polynomial $\bar{\chi}_{\mathcal{A}_I}\big(\mathrm{p}(i),t\big)$ of an ideal arrangement associated to a classical root system, for prime numbers $\mathrm{p}(i)$ strictly bigger than $2$. That computing is based on the finite field method, the minors associated to the classical root systems, and the partition in accordance with an ideal. By means of the polynomial interpolation of Lagrange, one can deduce the deduce the coboundary polynomial
$$\bar{\chi}_{\mathcal{A}_I}(q,t) = \sum_{k=0}^{\#\mathcal{A}_I} \sum_{i \in \big[\mathrm{r}(\mathcal{A}_I) +1\big]} \Big(\prod_{\substack{j \in \big[\mathrm{r}(\mathcal{A}_I)+1\big] \\ j \neq i}} \frac{q - \mathrm{p}(j)}{\mathrm{p}(i) - \mathrm{p}(j)}\Big) \times [t^k]\bar{\chi}_{\mathcal{A}_I}\big(\mathrm{p}(i),t\big) \times t^k.$$
We keep the notation of Section~\ref{SeYo} stating that a tuple $(i,j)$ represents a hyperplane. The examples in this section are computed with \texttt{SageMath}.

\begin{lemma} \label{LePr}
Take two subsets $A,B$ of $[n]$ such that, for every $i\in A$, and $j \in B$, we have $i<j$. For $\bar{x} = (\bar{x}_1, \dots, \bar{x}_n) \in \mathbb{F}_q^n$, define the sets
$$a_i(\bar{x}) := \{u \in A\ |\ \bar{x}_u = \bar{i}\} \quad \text{and} \quad b_i(\bar{x}) := \{u \in B\ |\ \bar{x}_u = \bar{i}\}.$$
\begin{itemize}
\item[(1)] Consider the hyperplane arrangement $\mathcal{A} = \begin{pmatrix} A \\ 2 \end{pmatrix}$. Then
$$\#\bar{\mathcal{A}}(\bar{x})\ =\ \sum_{i=0}^{q-1} \begin{pmatrix} \#a_i(\bar{x}) \\ 2 \end{pmatrix}.$$
\item[(2)] Consider the hyperplane arrangement $\mathcal{A} = A \times B$. Then
$$\#\bar{\mathcal{A}}(\bar{x})\ =\ \sum_{i=0}^{q-1} \#a_i(\bar{x}) \times \#b_i(\bar{x}).$$
\item[(3)] Consider the hyperplane arrangement $\mathcal{A} = \begin{Bmatrix} \tilde{A} \\ 2 \end{Bmatrix}$. Then $$\#\bar{\mathcal{A}}(\bar{x})\ =\ \#a_0(\bar{x})^2 - \#a_0(\bar{x})\, +\, \sum_{i=1}^{q-1} \begin{pmatrix} \#a_i(\bar{x}) \\ 2 \end{pmatrix}\, +\, \sum_{j=1}^{\frac{q-1}{2}} \#a_j(\bar{x}) \times \#a_{-j}(\bar{x}).$$
\item[(4)] Consider the hyperplane arrangement $\mathcal{A} = A \times \tilde{B}$. Then
$$\#\bar{\mathcal{A}}(\bar{x})\ =\ 2 \times \#a_0(\bar{x}) \times \#b_0(\bar{x})\, +\, \sum_{i=1}^{q-1} \#a_i(\bar{x}) \times \#b_i(\bar{x})\, +\, \sum_{j=1}^{q-1} \#a_j(\bar{x}) \times \#b_{-j}(\bar{x}).$$
\item[(5)] Consider the hyperplane arrangement $\mathcal{A} = A \times \{0\}$. Then
$$\#\bar{\mathcal{A}}(\bar{x})\ =\ \#a_0(\bar{x}).$$
\end{itemize}
\end{lemma}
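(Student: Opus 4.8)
Fix a point $\bar x = (\bar x_1, \dots, \bar x_n) \in \mathbb{F}_q^n$. The plan is, in each of the five cases, to describe explicitly which hyperplanes of the (reduced) arrangement $\bar{\mathcal A}$ contain $\bar x$, expressing the membership condition solely in terms of the coordinate values $\bar x_u$ with $u$ ranging over $A$ (and, where relevant, $B$). Since the level sets $a_i(\bar x)$, $i \in \mathbb{F}_q$, partition $A$, and the sets $b_i(\bar x)$ partition $B$, each such condition singles out a pair of indices lying in prescribed blocks of these partitions, so counting $\#\bar{\mathcal A}(\bar x)$ reduces to counting pairs within a block (giving a $\binom{\cdot}{2}$) or across two blocks (giving a product).

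For (1): a hyperplane of $\binom{A}{2}$ is $\{x_u = x_v\}$ with $u<v$ in $A$, and it contains $\bar x$ iff $\bar x_u = \bar x_v$, i.e.\ iff $u,v$ lie in the same block $a_i(\bar x)$; summing $\binom{\#a_i(\bar x)}{2}$ over $i$ gives the claim. For (2): a hyperplane of $A \times B$ is $\{x_u = x_v\}$ with $u \in A$, $v \in B$, containing $\bar x$ iff $\bar x_u = \bar x_v$; grouping by the common value $i$ yields $\sum_i \#a_i(\bar x)\,\#b_i(\bar x)$. For (5): $\{x_u = 0\}$ contains $\bar x$ iff $u \in a_0(\bar x)$, so $\#\bar{\mathcal A}(\bar x) = \#a_0(\bar x)$. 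These three are immediate once the definitions are unwound.

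Cases (3) and (4) additionally involve the hyperplanes $\{x_u = -x_v\}$, and here one uses that $q$ is odd. In (3) the hyperplanes $\{x_u = x_v\}$ with $u<v$ in $A$ contribute $\sum_{i} \binom{\#a_i(\bar x)}{2}$ as before; the hyperplanes $\{x_u = -x_v\}$ contain $\bar x$ iff $\bar x_u = -\bar x_v$, which for $\bar x_u = \bar x_v = 0$ contributes another $\binom{\#a_0(\bar x)}{2}$, and for $\bar x_u = i \neq 0$, $\bar x_v = -i$ contributes $\#a_i(\bar x)\,\#a_{-i}(\bar x)$ summed over the $\frac{q-1}{2}$ unordered sign classes $\{i,-i\}$. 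The two copies of $\binom{\#a_0(\bar x)}{2}$ combine into $\#a_0(\bar x)^2 - \#a_0(\bar x)$, and the remaining pieces are exactly the stated sums. Case (4) is the same bookkeeping with the ordered blocks $A$ and $B$ in place of a single set: $\{x_u = x_v\}$ gives $\sum_i \#a_i(\bar x)\,\#b_i(\bar x)$, the condition $\bar x_u = -\bar x_v$ gives $\#a_0(\bar x)\,\#b_0(\bar x)$ from the zero value and $\sum_{i=1}^{q-1}\#a_i(\bar x)\,\#b_{-i}(\bar x)$ otherwise (no halving, since $A$ and $B$ are disjoint and ordered), and the two equal $\#a_0(\bar x)\,\#b_0(\bar x)$ terms produce the coefficient $2$.

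The only delicate point, and the step I expect to be the main obstacle to state cleanly, is the bookkeeping in (3): one must observe that a pair with both coordinates equal to $0$ is counted once by the equation $x_u = x_v$ and once by $x_u = -x_v$ (because $-0 = 0$ in $\mathbb{F}_q$), which is precisely the origin of the term $\#a_0(\bar x)^2 - \#a_0(\bar x) = 2\binom{\#a_0(\bar x)}{2}$, and that for nonzero values the sum runs over $\frac{q-1}{2}$ sign classes rather than over all $q-1$ nonzero residues. Everything else is a routine translation of the defining equations into conditions on the level sets $a_i(\bar x)$ and $b_i(\bar x)$.
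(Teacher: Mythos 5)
Your proposal is correct and takes essentially the same approach as the paper: the paper's proof simply records the membership conditions $\bar{x}\in(u,v)\Leftrightarrow\bar{x}_u=\bar{x}_v$, $\bar{x}\in(u,-v)\Leftrightarrow\bar{x}_u=-\bar{x}_v$, $\bar{x}\in(u,0)\Leftrightarrow\bar{x}_u=\bar{0}$ and leaves the counting implicit, while you carry out the same translation and then spell out the bookkeeping (in particular the double-counting of zero-value pairs in~(3), which is exactly where $2\binom{\#a_0}{2}=\#a_0^2-\#a_0$ comes from, and the absence of halving in~(4)). Your write-up is therefore a more explicit version of the paper's argument, not a different route.
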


\begin{proof}
(1) Let $i,j \in A$ with $i<j$. Then, $\bar{x} \in (i,j)$ if and only if $\bar{x}_i = \bar{x}_j$.\\
(2) Let $i \in A$, $j \in B$. Then, $\bar{x} \in (i,j)$ if and only if $\bar{x}_i = \bar{x}_j$.\\
(3) Let $i,j \in A$ with $i<j$. Then, $\bar{x} \in (i,-j)$ if and only if $\bar{x}_i = -\bar{x}_j$.\\
(4) Let $i \in A$, $j \in B$. Then, $\bar{x} \in (i,-j)$ if and only if $\bar{x}_i = -\bar{x}_j$.\\
(5) Let $i \in A$. Then, $\bar{x} \in (i,0)$ if and only if $\bar{x}_i = \bar{0}$.
\end{proof}

\begin{theorem} \label{PrA}
Let $\mathcal{A}_I$ be a full connected ideal arrangement of $\Phi_{A_{n-1}}$, with associated partition $A^{(1)}| \dots |A^{(r)}$, and let $R^{(u)} = \big\{v \in \{u+1, \dots, r\}\ |\ s_I(A^{(u)}) \cap s_I(A^{(v)}) \neq \emptyset \big\}$. Then, for a positive integer $i$, we have
$$\bar{\chi}_{\mathcal{A}_I}\big(\mathrm{p}(i),t\big) = \sum_{\substack{a_1^{(1)} + \dots + a_{\mathrm{p}(i)}^{(1)}\ =\ \# A^{(1)} \\ \vdots \\ a_1^{(r)} + \dots + a_{\mathrm{p}(i)}^{(r)}\ =\ \# A^{(r)}}} \prod_{u=1}^r \binom{\# A^{(u)}}{a_1^{(u)}, \dots, a_{\mathrm{p}(i)}^{(u)}} \frac{t^{\sum_{s=1}^{\mathrm{p}(i)} \binom{a_s^{(u)}}{2} + a_s^{(u)} \sum_{v \in R^{(u)}} a_s^{(v)}}}{\mathrm{p}(i)}.$$
\end{theorem}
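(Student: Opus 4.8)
The plan is to apply the finite field method (Theorem~\ref{ThFi}) together with the structural decomposition of $\mathcal{A}_I$ given in Lemma~\ref{LeSA}. Since $\mathrm{Min}(\Phi_{A_{n-1}}^+) = \{0, \pm 1\}$ by Theorem~\ref{ThMi}, every prime $\mathrm{p}(i)$ lies outside the minor set, so $\mathcal{A}_I$ reduces correctly over $\mathbb{F}_{\mathrm{p}(i)}$ by Lemma~\ref{LeRe}, and Theorem~\ref{ThFi} gives
\begin{equation*}
\mathrm{p}(i)^{\,n - \mathrm{r}(\mathcal{A}_I)}\, \bar{\chi}_{\mathcal{A}_I}\big(\mathrm{p}(i),t\big) = \sum_{\bar{x} \in \mathbb{F}_{\mathrm{p}(i)}^n} t^{\#\bar{\mathcal{A}}_I(\bar{x})}.
\end{equation*}
Because $\mathcal{A}_I$ is a connected ideal arrangement of type $A_{n-1}$ on $n$ coordinates, $\mathrm{r}(\mathcal{A}_I) = n-1$, so the prefactor is exactly $\mathrm{p}(i)$, which accounts for the $\tfrac{1}{\mathrm{p}(i)}$ in the claimed formula. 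The first step is therefore to reduce the sum over $\bar{x}$ to the counting statistic $\#\bar{\mathcal{A}}_I(\bar{x})$.

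Next I would use Lemma~\ref{LeSA} to write $\mathcal{A}_I = \bigsqcup_{u=1}^{r} \binom{A^{(u)}}{2} \sqcup \big(A^{(u)} \times \bigsqcup_{v \in R^{(u)}} A^{(v)}\big)$, so that $\#\bar{\mathcal{A}}_I(\bar{x})$ splits as a sum over the pieces. For each $\bar{x}$, introduce the refinement counts $a_s^{(u)} := \#\{\,l \in A^{(u)} : \bar{x}_l = \bar{s}\,\}$ for $s \in [\mathrm{p}(i)]$ (using representatives $1, \dots, \mathrm{p}(i)$ for $\mathbb{F}_{\mathrm{p}(i)}$). Parts (1) and (2) of Lemma~\ref{LePr} then evaluate the contributions: the $\binom{A^{(u)}}{2}$ block contributes $\sum_{s} \binom{a_s^{(u)}}{2}$, and the $A^{(u)} \times A^{(v)}$ block contributes $\sum_{s} a_s^{(u)} a_s^{(v)}$. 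Summing these gives
\begin{equation*}
\#\bar{\mathcal{A}}_I(\bar{x}) = \sum_{u=1}^{r} \sum_{s=1}^{\mathrm{p}(i)} \left( \binom{a_s^{(u)}}{2} + a_s^{(u)} \sum_{v \in R^{(u)}} a_s^{(v)} \right),
\end{equation*}
which is precisely the exponent of $t$ appearing in the statement.

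Finally I would collect the sum over $\bar{x}$ according to the tuple of refinement counts. The number of $\bar{x} \in \mathbb{F}_{\mathrm{p}(i)}^n$ realizing a fixed collection $\big(a_s^{(u)}\big)$ (subject to $\sum_s a_s^{(u)} = \#A^{(u)}$ for each $u$, and with the coordinates outside $A^{(1)} \sqcup \dots \sqcup A^{(r)}$ unconstrained — but in the full connected case these exhaust $[n]$) is the product of multinomial coefficients $\prod_{u=1}^{r} \binom{\#A^{(u)}}{a_1^{(u)}, \dots, a_{\mathrm{p}(i)}^{(u)}}$. Substituting and dividing by $\mathrm{p}(i)$ yields the asserted formula. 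The main obstacle, and the point requiring the most care, is the bookkeeping in the last step: one must check that the indices $[n]$ are exactly partitioned by the blocks $A^{(u)}$ in the full connected case (so there are no free coordinates contributing an extra power of $\mathrm{p}(i)$), and that the condition $i < j$ hypothesis of Lemma~\ref{LePr} is met by the ordering of the blocks $A^{(1)} | \dots | A^{(r)}$ coming from Algorithm~P — this is what makes parts (1) and (2) of Lemma~\ref{LePr} applicable block by block. Verifying that the $R^{(u)}$ relation correctly captures which pairs of blocks are joined by a hyperplane of $\mathcal{A}_I$ is already done in Lemma~\ref{LeSA}, so that portion can be invoked directly.
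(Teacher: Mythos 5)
Your proposal is correct and follows essentially the same route as the paper's own proof: invoke Theorem~\ref{ThFi} (noting $\mathrm{r}(\mathcal{A}_I)=n-1$ so the prefactor is $\mathrm{p}(i)$), decompose $\mathcal{A}_I$ via Lemma~\ref{LeSA}, evaluate $\#\bar{\mathcal{A}}_I(\bar{x})$ blockwise using parts (1) and (2) of Lemma~\ref{LePr}, and then collect the sum over $\bar{x}\in\mathbb{F}_{\mathrm{p}(i)}^n$ by the refinement counts $a_s^{(u)}$ to produce the multinomial coefficients. The only thing you add is explicit attention to the bookkeeping — that the blocks $A^{(u)}$ partition all of $[n]$ in the full connected case and that the ordering produced by Algorithm~P satisfies the $i<j$ hypothesis of Lemma~\ref{LePr} — which the paper leaves implicit but which is indeed the point worth spelling out.
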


\begin{proof}
For a vector $\bar{x} = (\bar{x}_1, \dots, \bar{x}_n)$ in $\mathbb{F}_p^n$, define the set $a_k^{(i)}(\bar{x}) := \{u \in A^{(i)}\ |\ \bar{x}_u = \bar{k}\}$. We have $\dim \cap \mathcal{A}_I = 1$ for every full connected ideal arrangement $\mathcal{A}_I$ of $\Phi_{A_{n-1}}$. Then,
\begin{align*}
\mathrm{p}(i) \bar{\chi}_{\mathcal{A}_I}(q,t) & = \sum_{\bar{x} \in \mathbb{F}_{\mathrm{p}(i)}^n} t^{\#\bar{\mathcal{A}}_I(\bar{x})} \\
& = \sum_{\bar{x} \in \mathbb{F}_{\mathrm{p}(i)}^n} t^{\#\overline{\bigsqcup_{i=1}^r \binom{A^{(i)}}{2} \, \sqcup \, \big( A^{(i)} \times \bigsqcup_{j \in R^{(i)}} A^{(j)} \big)}(\bar{x})}\ \text{(Lemma~\ref{LeSA})} \\
& = \sum_{\bar{x} \in \mathbb{F}_{\mathrm{p}(i)}^n} t^{\sum_{s=0}^{\mathrm{p}(i)-1} \sum_{i=1}^r
\binom{\# a_s^{(i)}(\bar{x})}{2}\ +\ \# a_s^{(i)}(\bar{x}) \sum_{j \in R^{(i)}} \# a_s^{(j)}(\bar{x})}\ \text{(Lemma \ref{LePr} (1, 2))} \\
& = \sum_{\substack{a_1^{(1)} + \dots + a_{\mathrm{p}(i)}^{(1)} = \# A^{(1)} \\ \vdots \\ a_1^{(r)} + \dots + a_{\mathrm{p}(i)}^{(r)} = \#A^{(r)}}} \prod_{i=1}^r \binom{\# A^{(i)}}{a_1^{(i)}, \dots, a_{\mathrm{p}(i)}^{(i)}} t^{\sum_{s=1}^{\mathrm{p}(i)} \sum_{i=1}^r \binom{a_s^{(i)}}{2} + a_s^{(i)} \sum_{j \in R^{(i)}} a_s^{(j)}}.
\end{align*}
\end{proof}

\begin{example} The coboundary polynomial of the ideal arrangement $\mathcal{A}_{I_a}$ in Example \ref{ExA} is $\bar{\chi}_{\mathcal{A}_{I_a}}(q,t) = t^{15} + 2qt^{13} + q^2t^{11} - 2t^{13} + 4q^2t^{10} + 8qt^{11} + 6q^3t^8 + 16q^2t^9 - 8qt^{10} - 9t^{11} + 2q^4t^6 + 6q^3t^7 - 14q^2t^8 - 36qt^9 + 4t^{10} + 8q^4t^5 + 45q^3t^6 + 43q^2t^7 + 24qt^8 + 20t^9 + 10q^5t^3 + 64q^4t^4 + 34q^3t^5 - 207q^2t^6 - 144qt^7 - 16t^8 + q^7 + 15q^6t + 75q^5t^2 + 69q^4t^3 - 244q^3t^4 - 146q^2t^5 + 312qt^6 + 95t^7 - 15q^6 - 180q^5t - 701q^4t^2 - 810q^3t^3 + 124q^2t^4 + 90qt^5 - 152t^6 + 95q^5 + 887q^4t + 2585q^3t^2 + 2394q^2t^3 + 376qt^4 + 14t^5 - 329q^4 - 2294q^3t - 4683q^2t^2 - 2856qt^3 - 320t^4 + 672q^3 + 3276q^2t + 4144qt^2 + 1193t^3 - 808q^2 - 2440qt - 1420t^2 + 528q + 736t - 144$, and its Tutte polynomial is
\begin{align*}
T_{\mathcal{A}_{I_a}}(x,y) =\ & x^2y^6+ 2xy^7+ y^8+ x^7 + 2x^4y^3 + 6x^3y^4 + 9x^2y^5 + 10xy^6 + 5y^7 + 8x^6 + 10x^5y \\
& + 14x^4y^2 + 22x^3y^3 + 33x^2y^4 + 32xy^5 + 15y^6 + 26x^5 + 50x^4y + 73x^3y^2 + 83x^2y^3 \\
& + 68xy^4 + 29y^5 + 44x^4 + 94x^3y + 120x^2y^2 + 96xy^3 + 38y^4 + 41x^3 + 82x^2y \\
& + 77xy^2 + 32y^3 + 20x^2 + 32xy + 16y^2 + 4x + 4y.
\end{align*}
\end{example}

\begin{theorem}
Let $\mathcal{A}_I$ be a full connected ideal arrangement of $\Phi_{B_n}$, with associated partition $A^{(1)}| \dots |A^{(r)}|B^{(1)}| \dots |B^{(s)}$, and $R^{(u)} = \big\{l \in \{u+1, \dots, r\}\ |\ s_I(A^{(u)}) \cap s_I(A^{(l)}) \neq \emptyset \big\}$,\\ $R_A^{(v)} = \big\{l \in [r]\ |\ s_I(B^{(v)}) \cap s_I(A^{(l)}) \neq \emptyset \big\}$, $S^{(v)} = \big\{h \in [v-1]\ |\ s_I(B^{(v)}) \cap s_I(B^{(h)}) \neq \emptyset \big\}$, $R_0 = \big\{l \in [r]\ |\ s_I(0) \cap s_I(A^{(l)}) \neq \emptyset \big\}$, and $S_0 = \big\{h \in [s]\ |\ s_I(0) \cap s_I(B^{(h)}) \neq \emptyset \big\}$.\\
Then, for a positive integer $i$, we have
$$\bar{\chi}_{\mathcal{A}_I}\big(\mathrm{p}(i),t\big) = \sum_{\substack{a_0^{(1)} + \dots + a_{\mathrm{p}(i)-1}^{(1)} = \# A^{(1)} \\ \vdots \\ a_0^{(r)} + \dots + a_{\mathrm{p}(i)-1}^{(r)} = \# A^{(r)} \\ b_0^{(1)} + \dots + b_{\mathrm{p}(i)-1}^{(1)} = \# B^{(1)} \\ \vdots \\ b_0^{(s)} + \dots + b_{\mathrm{p}(i)-1}^{(s)} = \# B^{(s)}}} \prod_{u=1}^r \binom{\# A^{(u)}}{a_0^{(u)}, \dots, a_{\mathrm{p}(i)-1}^{(u)}} \prod_{v=1}^s \binom{\# B^{(v)}}{b_0^{(v)}, \dots, b_{\mathrm{p}(i)-1}^{(v)}}t^{\mathrm{f}_B(u,v)},$$
\begin{align*}
\text{with}\ \mathrm{f}_B(u,v) = & \sum_{q=0}^{\mathrm{p}(i)-1} \Big( \binom{a_q^{(u)}}{2} + a_q^{(u)} \sum_{l \in R^{(u)}} a_q^{(l)} \Big) + 2 b_0^{(v)} \Big( \frac{b_0^{(v)}-1}{2} + \sum_{l \in R_A^{(v)}} a_0^{(l)} + \sum_{h \in S^{(v)}} b_0^{(h)} \Big) \\
& + \sum_{q=1}^{\mathrm{p}(i)-1} b_q^{(v)} \Big( \frac{b_q^{(v)}-1}{2} + \sum_{l \in R_A^{(v)}} \big( a_q^{(l)} + a_{\mathrm{p}(i)-q}^{(l)} \big) + \sum_{h \in S^{(v)}} \big( b_q^{(h)} + b_{\mathrm{p}(i)-q}^{(h)} \big) \Big) \\
& + \sum_{q=1}^{\frac{\mathrm{p}(i)-1}{2}} b_q^{(v)} \times b_{\mathrm{p}(i)-q}^{(v)} + \sum_{l \in R_0} a_0^{(l)} + \sum_{h \in S_0} b_0^{(h)}.
\end{align*}
\end{theorem}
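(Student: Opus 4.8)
The plan is to run the finite field method of Theorem~\ref{ThFi} on $\mathcal{A}_I$, in the spirit of the proof of Theorem~\ref{PrA} for type $A_{n-1}$, but now feeding in the finer decomposition of Lemma~\ref{LeB} together with all five counting identities of Lemma~\ref{LePr}. First I would determine the rank of $\mathcal{A}_I$ and the primes for which the method applies. Since $\mathcal{A}_I$ is full it contains the rightmost box of every row of $Y_{B_n}$, that is the hyperplanes $\{x_1=x_2\},\dots,\{x_{n-1}=x_n\},\{x_n=0\}$, whose common intersection is already $\{0\}$; hence $\dim\cap\mathcal{A}_I=0$ and $\mathrm{r}(\mathcal{A}_I)=n$. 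By Theorem~\ref{ThMi}, $\mathrm{Min}(\Phi_{B_n}^+)=\{0,\pm2^0,\dots,\pm2^{\lfloor n/2\rfloor}\}$ consists of $0$ and numbers of the form $\pm 2^k$, so every odd prime lies outside it; for $i\geq1$ the number $\mathrm{p}(i)$ is an odd prime, and Lemma~\ref{LeRe} then guarantees that $\mathcal{A}_I$ reduces correctly over $\mathbb{F}_{\mathrm{p}(i)}$. Writing $p:=\mathrm{p}(i)$, Theorem~\ref{ThFi} gives
$$\bar\chi_{\mathcal{A}_I}(p,t)=p^{\,n-\mathrm{r}(\mathcal{A}_I)}\,\bar\chi_{\mathcal{A}_I}(p,t)=\sum_{\bar x\in\mathbb{F}_p^n}t^{\#\bar{\mathcal{A}}_I(\bar x)},$$
with no $1/p$ factor (in contrast with Theorem~\ref{PrA}) precisely because the rank is full.

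For $\bar x\in\mathbb{F}_p^n$ I introduce the block multiplicities $a_q^{(u)}(\bar x):=\#\{j\in A^{(u)}\mid\bar x_j=\bar q\}$ and $b_q^{(v)}(\bar x):=\#\{j\in B^{(v)}\mid\bar x_j=\bar q\}$ for $q\in\{0,\dots,p-1\}$. A correct reduction is injective on hyperplanes (it preserves ranks), so the disjoint union $\mathcal{A}_I=\mathcal{A}_{I_A}\sqcup\mathcal{A}_{I_B}\sqcup\mathcal{A}_{I_0}$ of Lemma~\ref{LeB} passes to the reductions and $\#\bar{\mathcal{A}}_I(\bar x)$ splits as the sum of the three corresponding counts. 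Applying Lemma~\ref{LePr}(1),(2) to $\mathcal{A}_{I_A}$, parts (3),(4) to $\mathcal{A}_{I_B}$, and part (5) to $\mathcal{A}_{I_0}$ — the order hypotheses of Lemma~\ref{LePr} being met by the block structure of $P_I$ — expresses $\#\bar{\mathcal{A}}_I(\bar x)$ as an explicit polynomial in the numbers $a_q^{(u)}(\bar x)$ and $b_q^{(v)}(\bar x)$.

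The core of the proof is then to regroup that polynomial into the shape of $\mathrm{f}_B$. Each occurrence of $b_{-q}$ is rewritten as $b_{p-q}$ (valid because $p$ is odd); the $q$ and $p-q$ copies of a cross term are merged into the single sums $\sum_{q=1}^{p-1}b_q^{(v)}(a_q^{(l)}+a_{p-q}^{(l)})$ and $\sum_{q=1}^{p-1}b_q^{(v)}(b_q^{(h)}+b_{p-q}^{(h)})$; the identity $(\#a_0)^2-\#a_0=2\binom{\#a_0}{2}$ together with the $q=0$ cross terms builds the factor $2b_0^{(v)}\big(\frac{b_0^{(v)}-1}{2}+\sum_{l\in R_A^{(v)}}a_0^{(l)}+\sum_{h\in S^{(v)}}b_0^{(h)}\big)$; and the opposite-sign internal pairs $\sum_{q=1}^{(p-1)/2}b_q^{(v)}b_{p-q}^{(v)}$ as well as the $\mathcal{A}_{I_0}$ contribution $\sum_{l\in R_0}a_0^{(l)}+\sum_{h\in S_0}b_0^{(h)}$ are carried over unchanged. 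I expect this bookkeeping to be the main obstacle: one must check that within each $\tilde{B}^{(v)}$-block of $\mathcal{A}_{I_B}$ the hyperplanes $\{x_i=x_j\}$ and $\{x_i=-x_j\}$ are separated correctly — the former giving, after combining $q$ with $p-q$, the $\binom{b_q}{2}$ and $b_0^2-b_0$ contributions, the latter the $b_qb_{p-q}$ sum — and that each cross-block term $A^{(l)}\times\tilde{B}^{(v)}$ or $B^{(h)}\times\tilde{B}^{(v)}$ is attached to exactly the block index recorded by $R_A^{(v)}$, $S^{(v)}$, $R_0$, $S_0$, so that nothing is double counted and the exponent collapses to $\mathrm{f}_B$ (its first line summed over $u\in[r]$, its middle lines over $v\in[s]$, and the last two terms once).

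Finally I would pass from a sum over $\bar x$ to a sum over profiles. Because $[n]=\bigsqcup_uA^{(u)}\sqcup\bigsqcup_vB^{(v)}$, the exponent $\#\bar{\mathcal{A}}_I(\bar x)$ depends on $\bar x$ only through the tuple $\big((a_q^{(u)})_{q,u},(b_q^{(v)})_{q,v}\big)$ of block multiplicities, which ranges over all tuples of nonnegative integers with $\sum_qa_q^{(u)}=\#A^{(u)}$ and $\sum_qb_q^{(v)}=\#B^{(v)}$, and the number of $\bar x$ realizing a given profile is $\prod_{u=1}^r\binom{\#A^{(u)}}{a_0^{(u)},\dots,a_{p-1}^{(u)}}\prod_{v=1}^s\binom{\#B^{(v)}}{b_0^{(v)},\dots,b_{p-1}^{(v)}}$, the choices being independent across the blocks. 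Substituting the regrouped exponent $\mathrm{f}_B$ then yields the claimed formula for $\bar\chi_{\mathcal{A}_I}(\mathrm{p}(i),t)$.
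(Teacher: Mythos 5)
Your proposal is correct and follows essentially the same route as the paper: the paper's proof simply records that $\dim\cap\mathcal{A}_I=0$ and then defers to the proof of Theorem~\ref{PrA}, using Lemma~\ref{LeB} and all five parts of Lemma~\ref{LePr}. You have merely spelled out the steps that the paper leaves implicit — justifying correct reduction via Theorem~\ref{ThMi} and Lemma~\ref{LeRe} for odd primes, applying Theorem~\ref{ThFi}, decomposing via Lemma~\ref{LeB}, counting via Lemma~\ref{LePr}, regrouping $b_{-q}$ as $b_{p-q}$ to assemble $\mathrm{f}_B$, and converting the sum over $\bar{x}\in\mathbb{F}_p^n$ into a sum over block-multiplicity profiles weighted by multinomials.
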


\begin{proof}
We have $\dim \cap \mathcal{A}_I = 0$ for every full connected ideal arrangement $\mathcal{A}_I$ of $\Phi_{B_n}$. The proof is similar to that of Theorem \ref{PrA}, using Lemma~\ref{LeB} and Lemma~\ref{LePr} (1, 2, 3 ,4, 5).
\end{proof}

\begin{example} The coboundary polynomial of the ideal arrangement $\mathcal{A}_{I_b}$ in Example \ref{ExB} is $\bar{\chi}_{\mathcal{A}_{I_b}}(q,t) = t^{21} + qt^{18} - t^18 + 2qt^{15} + 2q^2t^{13} + 4qt^{14} - 2t^{15} - 4qt^{13} - 4t^{14} + 3q^2t^{11} + 2qt^{12} + 2t^{13} + q^3t^9 + 5q^2t^{10} - 4qt^{11} - 2t^{12} + q^2t^9 - 2qt^{10} + t^{11} + 6q^3t^7 + 24q^2t^8 + 4qt^9 - 3t^{10} + 9q^3t^6 + 2q^2t^7 - 40qt^8 - 6t^9 + 3q^4t^4 + 33q^3t^5 + 50q^2t^6 - 14qt^7 + 16t^8 + 23q^4t^3 + 123q^3t^4 - 18q^2t^5 - 225qt^6 + 6t^7 + q^6 + 21q^5t + 123q^4t^2 - q^3t^3 - 783q^2t^4 - 290qt^5 + 166t^6 - 21q^5 - 327q^4t - 1362q^3t^2 - 853q^2t^3 + 1427qt^4 + 275t^5 + 178q^4 + 1965q^3t + 5391q^2t^2 + 2508qt^3 - 770t^4 - 774q^3 - 5625q^2t - 8778qt^2 - 1677t^3 + 1801q^2 + 7494qt + 4626t^2 - 2085q - 3528t + 900$, and its Tutte polynomial is 
\begin{align*}
T_{\mathcal{A}_{I_b}}(x,y) =\ & y^{15} + xy^{13} + 6y^{14} + 5xy^{12} + 20y^{13} + 15xy^{11} + 50y^{12} + 2x^2y^9 + 37xy^{10} + 105y^{11} \\
& + 8x^2y^8 + 80xy^9 + 194y^{10} + x^3y^6 + 23x^2y^7 + 156xy^8 + 322y^9 + 3x^3y^5 + 54x^2y^6 \\
& + 276xy^7 + 486y^8 + 12x^3y^4 + 112x^2y^5 + 445xy^6 + 672y^7 + x^6 + 3x^4y^2 + 37x^3y^3 \\
& + 214x^2y^4 + 662xy^5 + 854y^6 + 15x^5 + 29x^4y + 99x^3y^2 + 370x^2y^3 + 899xy^4 \\
& + 989y^5 + 88x^4 + 241x^3y + 586x^2y^2 + 1096xy^3 + 1021y^4 + 252x^3 + 682x^2y \\
& + 1102xy^2 + 888y^3 + 352x^2 + 728xy + 568y^2 + 192x + 192y.
\end{align*}
\end{example}

\begin{theorem}
Let $\mathcal{A}_I$ be a full connected ideal arrangement of $\Phi_{C_n}$, with associated partition $A^{(1)}| \dots |A^{(r)}|B^{(1)}| \dots |B^{(s)}$, and $R^{(u)} = \big\{l \in \{u+1, \dots, r\}\ |\ s_I(A^{(u)}) \cap s_I(A^{(l)}) \neq \emptyset \big\}$,\\ $R_A^{(v)} = \big\{l \in [r]\ |\ s_I(B^{(v)}) \cap s_I(A^{(l)}) \neq \emptyset \big\}$, $S^{(v)} = \big\{h \in [v-1]\ |\ s_I(B^{(v)}) \cap s_I(B^{(h)}) \neq \emptyset \big\}$, $R_0 = \big\{l \in [r]\ |\ s_I(0) \cap s_I(A^{(l)}) \neq \emptyset \big\}$, and $S_0 = \big\{h \in [s]\ |\ s_I(0) \cap s_I(B^{(h)}) \neq \emptyset \big\}$.\\
Then, for a positive integer $i$, we have
$$\bar{\chi}_{\mathcal{A}_I}\big(\mathrm{p}(i),t\big) = \sum_{\substack{a_0^{(1)} + \dots + a_{\mathrm{p}(i)-1}^{(1)} = \# A^{(1)} \\ \vdots \\ a_0^{(r)} + \dots + a_{\mathrm{p}(i)-1}^{(r)} = \# A^{(r)} \\ b_0^{(1)} + \dots + b_{\mathrm{p}(i)-1}^{(1)} = \# B^{(1)} \\ \vdots \\ b_0^{(s)} + \dots + b_{\mathrm{p}(i)-1}^{(s)} = \# B^{(s)}}} \prod_{u=1}^r \binom{\# A^{(u)}}{a_0^{(u)}, \dots, a_{\mathrm{p}(i)-1}^{(u)}} \prod_{v=1}^s \binom{\# B^{(v)}}{b_0^{(v)}, \dots, b_{\mathrm{p}(i)-1}^{(v)}}t^{\mathrm{f}_B(u,v)},$$
\begin{align*}
\text{with}\ \mathrm{f}_B(u,v) = & \sum_{q=0}^{\mathrm{p}(i)-1} \Big( \binom{a_q^{(u)}}{2} + a_q^{(u)} \sum_{l \in R^{(u)}} a_q^{(l)} \Big) + 2 b_0^{(v)} \Big( \frac{b_0^{(v)}-1}{2} + \sum_{l \in R_A^{(v)}} a_0^{(l)} + \sum_{h \in S^{(v)}} b_0^{(h)} \Big) \\
& + \sum_{q=1}^{\mathrm{p}(i)-1} b_q^{(v)} \Big( \frac{b_q^{(v)}-1}{2} + \sum_{l \in R_A^{(v)}} \big( a_q^{(l)} + a_{\mathrm{p}(i)-q}^{(l)} \big) + \sum_{h \in S^{(v)}} \big( b_q^{(h)} + b_{\mathrm{p}(i)-q}^{(h)} \big) \Big) \\
& + \sum_{q=1}^{\frac{\mathrm{p}(i)-1}{2}} b_q^{(v)} \times b_{\mathrm{p}(i)-q}^{(v)} + \sum_{l \in R_0} a_0^{(l)} + \sum_{h \in S_0} b_0^{(h)}.
\end{align*}
\end{theorem}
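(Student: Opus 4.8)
The plan is to apply the finite field method of Theorem~\ref{ThFi} verbatim as in the proof of Theorem~\ref{PrA}, substituting the type $C$ decomposition of Lemma~\ref{LeC} for the type $A$ decomposition of Lemma~\ref{LeSA}. I first record the structural point that for a full connected ideal arrangement $\mathcal{A}_I$ of $\Phi_{C_n}$ one has $\dim \cap \mathcal{A}_I = 0$: fullness forces $\mathcal{A}_I$ to contain the rightmost box of every row of $Y_{C_n}$, and in $Y_{C_n}$ these are the hyperplanes $\{x_1 = x_2\}, \{x_2 = x_3\}, \dots, \{x_{n-1} = x_n\}, \{x_n = 0\}$, whose common intersection is the origin. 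Hence $\mathrm{r}(\mathcal{A}_I) = n$, so the prefactor $p^{n-\mathrm{r}(\mathcal{A}_I)}$ in Theorem~\ref{ThFi} equals $1$; this is why, contrary to Theorem~\ref{PrA}, no factor $1/\mathrm{p}(i)$ occurs, and the resulting formula coincides with the one obtained in type $B_n$ (indeed $\mathcal{A}_{C_n} = \mathcal{A}_{B_n}$ as arrangements, only the passage from $I$ to the partition data differing).

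Next I would justify using the method for every odd prime $\mathrm{p}(i)$, $i \geq 1$. Since $2e_i$ is parallel to $e_i$, every hyperplane of $\mathcal{A}_I$ is already a hyperplane of $\mathcal{A}_{B_n}$, so $\mathcal{A}_I$ is a subarrangement of $\mathcal{A}_{B_n}$; alternatively each minor of $\mathsf{M}_{\Phi_{C_n}^+}$ differs from a minor of $\mathsf{M}_{\Phi_{B_n}^+}$ by a power of $2$, so by Theorem~\ref{ThMi} and Proposition~\ref{PrDn} the set $\mathrm{Min}(I^c)$ consists only of $0$ and signed powers of $2$. Either way $\mathrm{p}(i) \notin \mathrm{Min}(I^c)$ for $i \geq 1$, and Lemma~\ref{LeRe} gives that $\mathcal{A}_I$ reduces correctly over $\mathbb{F}_{\mathrm{p}(i)}$, so Theorem~\ref{ThFi} applies with $\bar{\chi}_{\mathcal{A}_I}(\mathrm{p}(i),t) = \sum_{\bar x \in \mathbb{F}_{\mathrm{p}(i)}^n} t^{\#\bar{\mathcal{A}}_I(\bar x)}$.

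For $\bar x = (\bar x_1, \dots, \bar x_n) \in \mathbb{F}_{\mathrm{p}(i)}^n$ put $a_q^{(u)} = \#\{u' \in A^{(u)} : \bar x_{u'} = \bar q\}$ and $b_q^{(v)} = \#\{v' \in B^{(v)} : \bar x_{v'} = \bar q\}$. Using $\bar{\mathcal{A}}_I(\bar x) = \bar{\mathcal{A}}_{I_A}(\bar x) \sqcup \bar{\mathcal{A}}_{I_B}(\bar x) \sqcup \bar{\mathcal{A}}_{I_0}(\bar x)$ from Lemma~\ref{LeC}, I would evaluate $\#\bar{\mathcal{A}}_I(\bar x)$ by Lemma~\ref{LePr}: parts (1) and (2) on the $\binom{A^{(u)}}{2}$ and $A^{(u)}\times A^{(l)}$ pieces of $\mathcal{A}_{I_A}$, part (3) on the $\begin{Bmatrix}\tilde{B}^{(v)}\\2\end{Bmatrix}$ pieces, part (4) on the $A^{(l)}\times\tilde{B}^{(v)}$ and $B^{(h)}\times\tilde{B}^{(v)}$ pieces of $\mathcal{A}_{I_B}$, and part (5) on the $A^{(l)}\times\{0\}$ and $B^{(h)}\times\{0\}$ pieces of $\mathcal{A}_{I_0}$. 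Summing $t^{\#\bar{\mathcal{A}}_I(\bar x)}$ over all $\bar x$ and classifying the vectors $\bar x$ by their profile $\big(a_q^{(u)}, b_q^{(v)}\big)$ — the number of $\bar x$ realizing a prescribed profile being $\prod_{u=1}^r \binom{\#A^{(u)}}{a_0^{(u)}, \dots, a_{\mathrm{p}(i)-1}^{(u)}} \prod_{v=1}^s \binom{\#B^{(v)}}{b_0^{(v)}, \dots, b_{\mathrm{p}(i)-1}^{(v)}}$ — produces the stated sum, provided the exponent collected from Lemma~\ref{LePr} coincides with $\mathrm{f}_B(u,v)$.

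The main obstacle is exactly this final identification of exponents. The subtle contributions are those governed by Lemma~\ref{LePr}(3) and (4): part (3) yields the $0$-class term $(\#a_0^{(v)})^2 - \#a_0^{(v)}$ together with cross terms $\#a_q^{(v)}\#a_{\mathrm{p}(i)-q}^{(v)}$ for $1 \leq q \leq \tfrac{\mathrm{p}(i)-1}{2}$, which have to be reorganised into the $2b_0^{(v)}(\tfrac{b_0^{(v)}-1}{2}+\cdots)$, $b_q^{(v)}(\tfrac{b_q^{(v)}-1}{2}+\cdots)$ and $b_q^{(v)} b_{\mathrm{p}(i)-q}^{(v)}$ blocks of $\mathrm{f}_B$; and part (4) breaks each $A^{(l)}\times\tilde{B}^{(v)}$ or $B^{(h)}\times\tilde{B}^{(v)}$ contribution into a $2\#a_0\#b_0$ term, an $\#a_q\#b_q$ term and an $\#a_q\#b_{-q} = \#a_q\#b_{\mathrm{p}(i)-q}$ term, so the index reflection $q \mapsto \mathrm{p}(i)-q$ must be handled with care and one must check that the $\{0\}$-column contributions from $\mathcal{A}_{I_0}$ supply precisely the closing summand $\sum_{l\in R_0} a_0^{(l)} + \sum_{h\in S_0} b_0^{(h)}$. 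Everything else is a mechanical transcription of the argument of Theorem~\ref{PrA}.
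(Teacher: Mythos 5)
Your proposal follows the same route as the paper: invoke the finite-field method of Theorem~\ref{ThFi} (noting $\dim\cap\mathcal{A}_I=0$ so the prefactor is trivial), decompose via Lemma~\ref{LeC}, count with Lemma~\ref{LePr}(1--5), and group the vectors $\bar{x}$ by the multiset profile $(a_q^{(u)},b_q^{(v)})$ exactly as in Theorem~\ref{PrA}. The paper's proof is a one-line ``analogous to Theorem~\ref{PrA} using Lemma~\ref{LeC} and Lemma~\ref{LePr}(1,2,3,4,5),'' and you have correctly unpacked that sentence, including the two side remarks the paper leaves implicit (validity of $\mathrm{p}(i)$ via $\mathrm{Min}(\Phi_{B_n}^+)$, and the reason the $1/\mathrm{p}(i)$ factor of type $A$ disappears).
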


\begin{proof}
We have $\dim \cap \mathcal{A}_I = 0$ for every full connected ideal arrangement $\mathcal{A}_I$ of $\Phi_{C_n}$. The proof is similar to that of Theorem \ref{PrA}, using Lemma~\ref{LeC} and Lemma~\ref{LePr} (1, 2, 3 ,4, 5).
\end{proof}

\begin{example} The coboundary polynomial of the ideal arrangement $\mathcal{A}_{I_c}$ in Example \ref{ExC} is $\bar{\chi}_{\mathcal{A}_{I_c}}(q,t) = t^{21} + qt^{18} - t^{18} + 2qt^{15} + 2q^2t^{13} + 3qt^{14} - 2t^{15} - 3qt^{13} - 3t^{14} + 2q^2t^{11} + 3qt^{12} + t^{13} + q^3t^9 + 6q^2t^{10} - 2qt^{11} - 3t^{12} - 6qt^{10} + 4q^3t^7 + 25q^2t^8 + 16qt^9 + 13q^3t^6 + 25q^2t^7 - 52qt^8 - 17t^9 + 3q^4t^4 + 37q^3t^5 + 17q^2t^6 - 80qt^7 + 27t^8 + 23q^4t^3 + 107q^3t^4 - 73q^2t^5 - 152qt^6 + 51t^7 + q^6 + 21q^5t + 123q^4t^2 + 13q^3t^3 - 636q^2t^4 - 111qt^5 + 122t^6 - 21q^5 - 327q^4t - 1366q^3t^2 - 963q^2t^3 + 1046qt^4 + 147t^5 + 178q^4 + 1965q^3t + 5419q^2t^2 + 2764qt^3 - 520t^4 - 774q^3 - 5625q^2t - 8838qt^2 - 1837t^3 + 1801q^2 + 7494qt + 4662t^2 - 2085q - 3528t + 900$, and its Tutte polynomial is
\begin{align*}
T_{\mathcal{A}_{I_c}}(x,y) =\ & y^{15} + xy^{13} + 6y^{14} + 5xy^{12} + 20y^{13} + 15xy^{11} + 50y^{12} + 2x^2y^9 + 37xy^{10} + 105y^{11} \\
& + 8x^2y^8 + 79xy^9 + 194y^{10} + x^3y^6 + 22x^2y^7 + 152xy^8 + 323y^9 + 3x^3y^5 + 51x^2y^6 \\
& + 269xy^7 + 491y^8 + 10x^3y^4 + 105x^2y^5 + 438xy^6 + 685y^7 + x^6 + 3x^4y^2 + 35x^3y^3 \\
& + 207x^2y^4 + 662xy^5 + 878y^6 + 15x^5 + 29x^4y + 103x^3y^2 + 378x^2y^3 + 920xy^4 \\
& + 1024y^5 + 88x^4 + 241x^3y + 602x^2y^2 + 1130xy^3 + 1055y^4 + 252x^3 + 682x^2y \\
& + 1118xy^2 + 904y^3 + 352x^2 + 728xy + 568y^2 + 192x + 192y.
\end{align*}
\end{example}

\begin{theorem}
Let $\mathcal{A}_I$ be a full connected ideal arrangement of $\Phi_{D_n}$, with associated partition $A^{(1)}| \dots |A^{(r)}|B^{(1)}| \dots |B^{(s)}$, and let $R^{(u)} = \big\{l \in \{u+1, \dots, r\}\ |\ s_I(A^{(u)}) \cap s_I(A^{(l)}) \neq \emptyset \big\}$, $R_A^{(v)} = \big\{l \in [r]\ |\ s_I(B^{(v)}) \cap s_I(A^{(l)}) \neq \emptyset \big\}$, and $S^{(v)} = \big\{h \in [v-1]\ |\ s_I(B^{(v)}) \cap s_I(B^{(h)}) \neq \emptyset \big\}$.\\
Then, for a positive integer $i$, we have
$$\bar{\chi}_{\mathcal{A}_I}\big(\mathrm{p}(i),t\big) = \sum_{\substack{a_0^{(1)} + \dots + a_{\mathrm{p}(i)-1}^{(1)} = \# A^{(1)} \\ \vdots \\ a_0^{(r)} + \dots + a_{\mathrm{p}(i)-1}^{(r)} = \# A^{(r)} \\ b_0^{(1)} + \dots + b_{\mathrm{p}(i)-1}^{(1)} = \# B^{(1)} \\ \vdots \\ b_0^{(s)} + \dots + b_{\mathrm{p}(i)-1}^{(s)} = \# B^{(s)}}} \prod_{u=1}^r \binom{\# A^{(u)}}{a_0^{(u)}, \dots, a_{\mathrm{p}(i)-1}^{(u)}} \prod_{v=1}^s \binom{\# B^{(v)}}{b_0^{(v)}, \dots, b_{\mathrm{p}(i)-1}^{(v)}}t^{\mathrm{f}_D(u,v)},$$
\begin{align*}
\text{with}\ \mathrm{f}_D(u,v) = & \sum_{q=0}^{\mathrm{p}(i)-1} \Big( \binom{a_q^{(u)}}{2} + a_q^{(u)} \sum_{l \in R^{(u)}} a_q^{(l)} \Big) + 2 b_0^{(v)} \Big( \frac{b_0^{(v)}-1}{2} + \sum_{l \in R_A^{(v)}} a_0^{(l)} + \sum_{h \in S^{(v)}} b_0^{(h)} \Big) \\
& + \sum_{q=1}^{\mathrm{p}(i)-1} b_q^{(v)} \Big( \frac{b_q^{(v)}-1}{2} + \sum_{l \in R_A^{(v)}} \big( a_q^{(l)} + a_{\mathrm{p}(i)-q}^{(l)} \big) + \sum_{h \in S^{(v)}} \big( b_q^{(h)} + b_{\mathrm{p}(i)-q}^{(h)} \big) \Big) \\
& + \sum_{q=1}^{\frac{\mathrm{p}(i)-1}{2}} b_q^{(v)} \times b_{\mathrm{p}(i)-q}^{(v)}.
\end{align*}
\end{theorem}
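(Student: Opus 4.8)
The plan is to run the same finite-field argument that proves Theorem~\ref{PrA} and its $B_n$, $C_n$ analogues, with Lemma~\ref{LeSA} replaced by Lemma~\ref{LeD} and with the appropriate parts of Lemma~\ref{LePr}. Fix a positive integer $i$, so $\mathrm{p}(i)$ is an odd prime, and first check that $\mathcal{A}_I$ reduces correctly over $\mathbb{F}_{\mathrm{p}(i)}$. Since $\Phi_{D_n}^+ = \{e_j \pm e_k \mid j<k\}$, every row of $\mathsf{M}_{\Phi_{D_n}^+}$ is one of the vectors $d(j,\pm k)$, hence every square submatrix of $\mathsf{M}_{\Phi_{D_n}^+}$ is a square submatrix of a matrix in $\mathscr{D}_n$; by Lemma~\ref{LeDn} and Proposition~\ref{PrDn}, $\mathrm{Min}(\Phi_{D_n}^+) \subseteq \{0,\pm 2^0,\dots,\pm 2^{\lfloor n/2\rfloor}\}$, which contains no odd prime, so Lemma~\ref{LeRe} gives correct reduction.

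\textbf{Specializing the finite-field identity.} Next I would record that $\dim \cap \mathcal{A}_I = 0$ for every full connected ideal arrangement of $\Phi_{D_n}$, exactly as in the $B_n$ and $C_n$ cases; this is the only structural point that differs from the $A_{n-1}$ situation, and it is precisely why the final formula carries no factor $1/\mathrm{p}(i)$. With $\mathrm{r}(\mathcal{A}_I)=n$, Theorem~\ref{ThFi} becomes $\bar{\chi}_{\mathcal{A}_I}(\mathrm{p}(i),t) = \sum_{\bar{x}\in\mathbb{F}_{\mathrm{p}(i)}^n} t^{\#\bar{\mathcal{A}}_I(\bar{x})}$.

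\textbf{Counting $\#\bar{\mathcal{A}}_I(\bar{x})$.} For $\bar{x}=(\bar{x}_1,\dots,\bar{x}_n)$ put $a_q^{(u)}(\bar{x}) := \#\{w\in A^{(u)} \mid \bar{x}_w=\bar q\}$ and $b_q^{(v)}(\bar{x}) := \#\{w\in B^{(v)} \mid \bar{x}_w=\bar q\}$. Lemma~\ref{LeD} splits $\bar{\mathcal{A}}_I = \bar{\mathcal{A}}_{I_A}\sqcup\bar{\mathcal{A}}_{I_B}$; Lemma~\ref{LePr} (1, 2) evaluates the $\binom{A^{(u)}}{2}$ and $A^{(u)}\times A^{(l)}$ blocks of $\mathcal{A}_{I_A}$, and Lemma~\ref{LePr} (3, 4) evaluates the $\big\{(w,\pm w')\mid w,w'\in B^{(v)},\,w<w'\big\}$, $A^{(l)}\times\tilde{B}^{(v)}$ and $B^{(h)}\times\tilde{B}^{(v)}$ blocks of $\mathcal{A}_{I_B}$; no part (5) is needed since $\Phi_{D_n}^+$ produces no hyperplane $x_w=0$ (which also explains why $\mathrm{f}_D$ lacks the $\sum_{l\in R_0}a_0^{(l)}+\sum_{h\in S_0}b_0^{(h)}$ terms present in $\mathrm{f}_B$). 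Summing these contributions over all blocks gives $\#\bar{\mathcal{A}}_I(\bar{x}) = \mathrm{f}_D$, after one rewrites $\bar{x}_w=-\bar q$ as $\bar{x}_w=\overline{\mathrm{p}(i)-q}$ (so $a_{-q}^{(l)}=a_{\mathrm{p}(i)-q}^{(l)}$, etc., which merges the two sums of Lemma~\ref{LePr}~(4) into the $a_q^{(l)}+a_{\mathrm{p}(i)-q}^{(l)}$ and $b_q^{(h)}+b_{\mathrm{p}(i)-q}^{(h)}$ terms) and uses $\#a_0^2-\#a_0 = 2\binom{\#a_0}{2}$ to identify the $2b_0^{(v)}\cdot\frac{b_0^{(v)}-1}{2}$ term; the self-paired sums $\sum_{q=1}^{(\mathrm{p}(i)-1)/2} b_q^{(v)}b_{\mathrm{p}(i)-q}^{(v)}$ use that $\mathrm{p}(i)$ is odd.

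\textbf{Grouping and the main obstacle.} Finally I would sort the sum over $\bar{x}$ by the profile $\big((a_q^{(u)})_{q,u},(b_q^{(v)})_{q,v}\big)$: for each profile satisfying $\sum_q a_q^{(u)}=\#A^{(u)}$ and $\sum_q b_q^{(v)}=\#B^{(v)}$ there are exactly $\prod_u\binom{\#A^{(u)}}{a_0^{(u)},\dots,a_{\mathrm{p}(i)-1}^{(u)}}\prod_v\binom{\#B^{(v)}}{b_0^{(v)},\dots,b_{\mathrm{p}(i)-1}^{(v)}}$ vectors $\bar{x}$, and $\#\bar{\mathcal{A}}_I(\bar{x})$ depends only on the profile, which yields the asserted formula. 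I expect the only real work to be the bookkeeping in the previous paragraph: matching the output of Lemma~\ref{LePr} (3, 4) term-by-term with $\mathrm{f}_D$ after the $q\leftrightarrow\mathrm{p}(i)-q$ reindexing, and checking that each cross-contribution between two distinct blocks is counted once — which is exactly what the asymmetric index sets $R^{(u)},R_A^{(v)},S^{(v)}$ are designed to enforce. The reduction and multinomial-grouping steps are routine given the lemmas already established.
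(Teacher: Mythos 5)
Your proposal is correct and follows essentially the same route as the paper: specialize Theorem~\ref{ThFi} using $\dim\cap\mathcal{A}_I=0$, decompose $\mathcal{A}_I=\mathcal{A}_{I_A}\sqcup\mathcal{A}_{I_B}$ via Lemma~\ref{LeD}, count $\#\bar{\mathcal{A}}_I(\bar{x})$ block-by-block with Lemma~\ref{LePr}~(1,\,2,\,3,\,4), and group the sum over $\bar{x}\in\mathbb{F}_{\mathrm{p}(i)}^n$ by the profile $(a_q^{(u)},b_q^{(v)})$ to extract the multinomial coefficients. You also make explicit two points the paper leaves implicit but which are genuinely needed — that $\mathrm{Min}(I^c)\subseteq\{0,\pm2^0,\dots,\pm2^{\lfloor n/2\rfloor}\}$ (so odd primes reduce correctly) and the $-q\leftrightarrow\mathrm{p}(i)-q$ reindexing that produces the $a_q^{(l)}+a_{\mathrm{p}(i)-q}^{(l)}$ terms — and you correctly explain why part~(5) of Lemma~\ref{LePr} and the $R_0$, $S_0$ terms drop out.
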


\begin{proof}
We have $\dim \cap \mathcal{A}_I = 0$, for every full connected ideal arrangement $\mathcal{A}_I$ of $\Phi_{D_n}$. The proof is similar to that of Proposition \ref{PrA}, using Lemma~\ref{LeD} and Lemma~\ref{LePr} (1, 2, 3, 4).
\end{proof}

\begin{example} The coboundary polynomial of the ideal arrangement $\mathcal{A}_{I_d}$ in Example \ref{ExD} is $\bar{\chi}_{\mathcal{A}_{I_d}}(q,t) = t^{16} + qt^{14} - t^{14} + qt^{12} + 3q^2t^{10} + 4qt^{11} - t^{12} - 2qt^{10} - 4t^{11} + q^3t^7 + 11q^2t^8 + 2qt^9 - t^{10} + 5q^3t^6 + q^2t^7 - 23qt^8 - 2t^9 + 13q^3t^5 + 35q^2t^6 + 21qt^7 + 12t^8 + 16q^4t^3 + 72q^3t^4 - 19q^2t^5 - 134qt^6 - 23t^7 + q^6 + 16q^5t + 72q^4t^2 - 9q^3t^3 - 292q^2t^4 - 19qt^5 + 94t^6 - 16q^5 - 192q^4t - 631q^3t^2 - 353q^2t^3 + 332qt^4 + 25t^5 + 104q^4 + 899q^3t + 2012q^2t^2 + 923qt^3 - 112t^4 - 350q^3 - 2037q^2t - 2717qt^2 - 577t^3 + 639q^2 + 2205qt + 1264t^2 - 594q - 891t + 216$, and its Tutte polynomial is
\begin{align*}
T_{\mathcal{A}_{I_d}}(x,y) =\ & xy^9 + y^{10} + 5xy^8 + 5y^9 + 3x^2y^6 + 16xy^7 + 15y^8 + x^3y^4 + 12x^2y^5 + 38xy^6 + 34y^7 \\
& + x^6 + 8x^3y^3 + 38x^2y^4 + 79xy^5 + 63y^6 + 10x^5 + 16x^4y + 34x^3y^2 + 81x^2y^3 \\
& + 134xy^4 + 95y^5 + 39x^4 + 87x^3y + 152x^2y^2 + 191xy^3 + 117y^4 + 74x^3 \\
& + 162x^2y + 196xy^2 + 112y^3 + 68x^2 + 116xy + 72y^2 + 24x + 24y.
\end{align*}
\end{example}

\section{Exceptional Root Systems}  \label{SeEx}

\noindent We introduce a linear order on the exceptional root systems $\Phi_{G_2}^+$, $\Phi_{F_4}^+$, and $\Phi_{E_6}^+$, and expose the formula of Crapo by means of this order. As the formula of Crapo computes the Tutte polynomial of a vector set, we draw the Hasse diagram of these root systems in order to visualize the vectors that make up their ideals. Then, we compute some examples of Tutte polynomials of ideal arrangements. These computings are done with \texttt{SageMath}.

\smallskip

\noindent Take an exceptional root system $\Phi_{X_n}$, $X_n \in \{G_2, F_4, E_6\}$, associated to a simple system $\Delta_{X_n} = \{\alpha_1, \dots, \alpha_n\}$. Define the function $\mathrm{l}: \Phi_{X_n}^+ \rightarrow \mathbb{N}^*$ by
$$\text{if}\ u = \sum_{i=1}^n u_i \alpha_i,\ \text{then}\ \mathrm{l}(u) := \overbrace{1 \dots 1}^{u_1\, \text{times}}\, \dots \, \overbrace{n \dots n}^{u_n\, \text{times}}.$$
It is clear that $\mathrm{l}$ is a bijection between $\Phi_{X_n}^+$ and $\mathrm{l}(\Phi_{X_n}^+)$. Define the linear order $\lhd$ on $\Phi_{X_n}^+$ by
$$\forall a, b \in \Phi_{X_n}^+:\ a \lhd b \, \Leftrightarrow \, \mathrm{l}(a) < \mathrm{l}(b).$$

\noindent Let $\mathrm{r}$ be the rank function of vector sets in $\mathbb{R}^n$, and $X$ a subset of $\Phi_{X_n}^+$. A basis of $X$ is a subset $B$ of $X$ such that $\mathrm{r}(B) = |B| = \mathrm{r}(X)$. Denote by $\mathscr{B}(X)$ the basis set of $X$.

\noindent For a subset $A$ of $X$, and an element $x$ in $X$, define the set
$$A_{\lhd x} := \{a \in A\ |\ a \lhd x\}.$$

\noindent Let $X$ be a subset of $\Phi_{X_n}^+$, and take a basis $B$ in $\mathscr{B}(X)$:
\begin{itemize}
\item Let $b \in B$. One says that $b$ is an internal active element of $B$ if $$\forall x \in X_{\lhd b} \setminus B:\ \mathrm{r}\big(\{x\} \sqcup (B \setminus \{b\})\big) < n.$$
\item Let $x \in X \setminus B$. One says that $x$ is an external active element of $B$ if
$$\mathrm{r}\big(\{x\} \sqcup B_{\rhd x}\big) = \mathrm{r}(B_{\rhd x}).$$
\end{itemize}

\noindent Denote by $i(B)$ resp. $e(B)$ the number of internal resp. external active elements of a basis $B$. We compute the Tutte polynomial of the hyperplane arrangement $\mathcal{A} = \{x^{\perp}\}_{x \in X}$ by using the formula of Crapo \cite[Theorem~2.32]{DePr} $$T_{\mathcal{A}}(x,y) = \sum_{B \in \mathscr{B}(X)} x^{i(B)} y^{e(B)}.$$

\noindent In our case, $X$ is a complement $\Phi_{X_n}^+ \setminus I$ of an ideal $I$ of $\Phi_{X_n}^+$. We represent the Hasse diagram of $(\Phi_{G_2}^+, \preceq)$ resp. $(\Phi_{F_4}^+, \preceq)$ resp. $(\Phi_{E_6}^+, \preceq)$ in Figure \ref{G2} resp. \ref{F4} resp. \ref{E6}. In the Hasse diagrams, a vector $u$ of $\Phi_{X_n}^+$ is represented by $X\mathrm{l}(u)$.

\begin{example}
$G1112$ is the vector $(3,1)$, $F1234$ is the vector $(1,1,1,1)$, and $E123445$ is the vector $(1,1,1,2,1,0)$. 
\end{example}

\begin{figure}[h]
\centering
\includegraphics[scale=0.8]{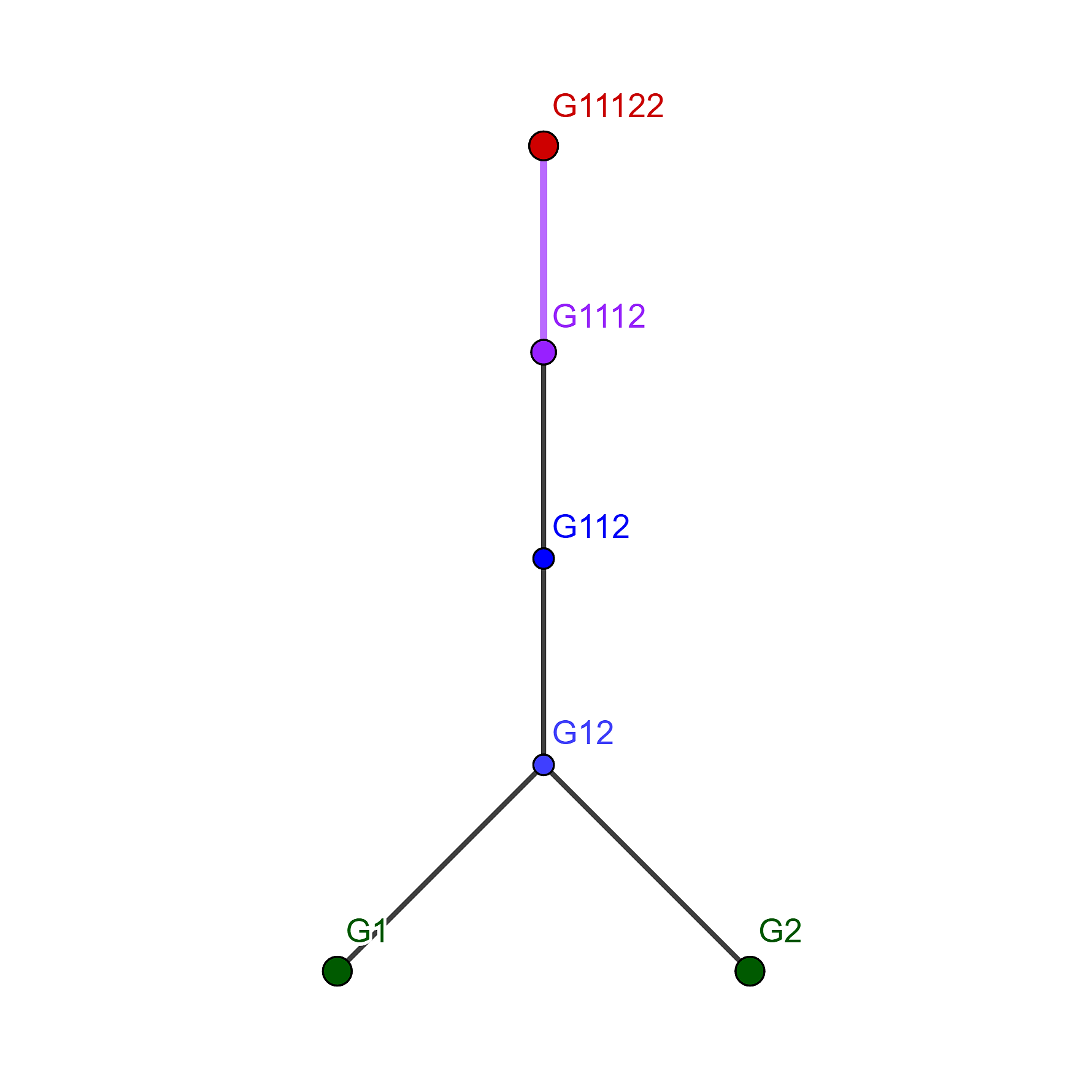}
\caption{Hasse Diagram of $(\Phi_{G_2}^+, \preceq)$}
\label{G2}
\end{figure}

\begin{figure}[h]
\centering
\includegraphics[scale=0.8]{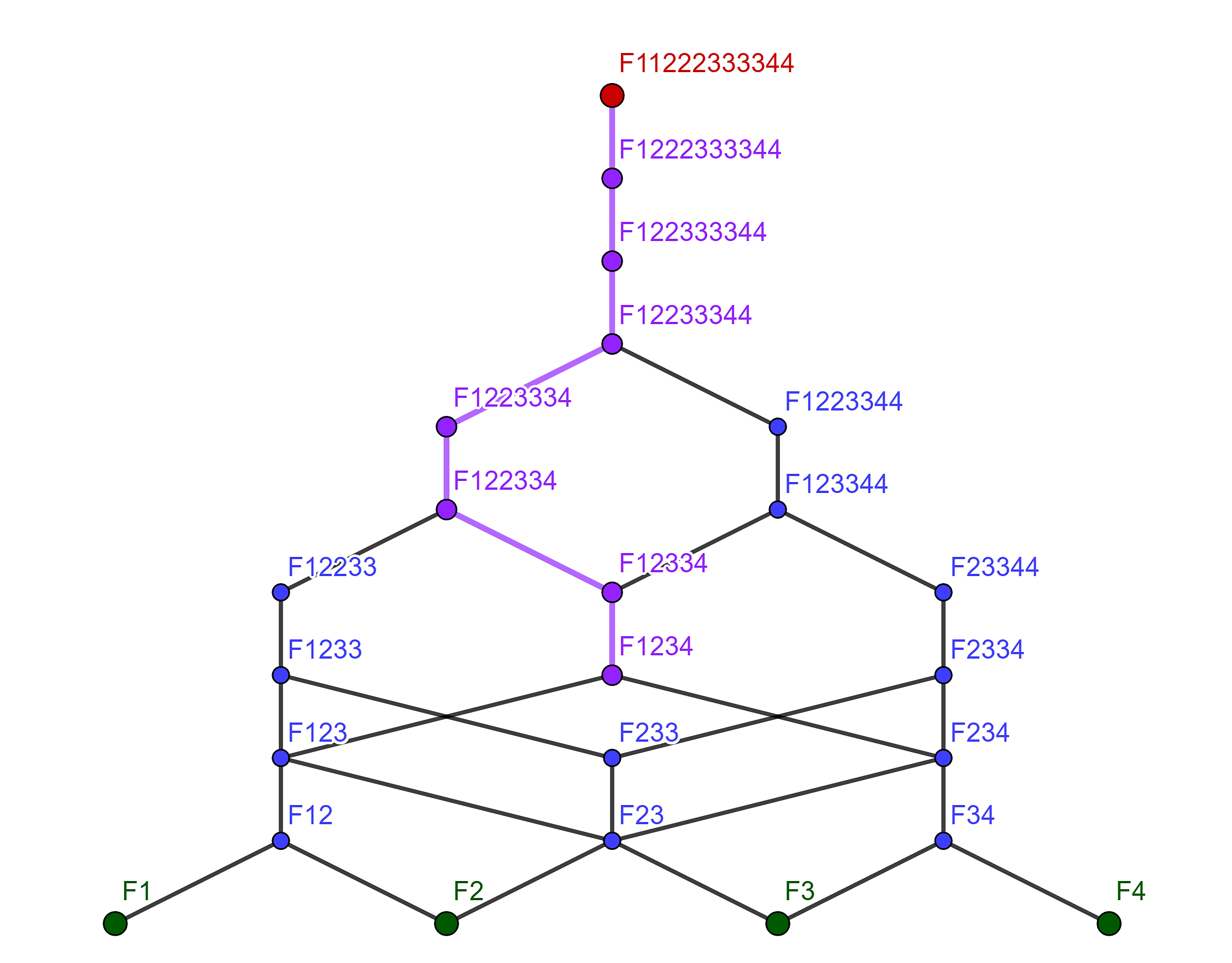}
\caption{Hasse Diagram of $(\Phi_{F_4}^+, \preceq)$}
\label{F4}
\end{figure}

\begin{figure}[h]
\centering
\includegraphics[scale=0.8]{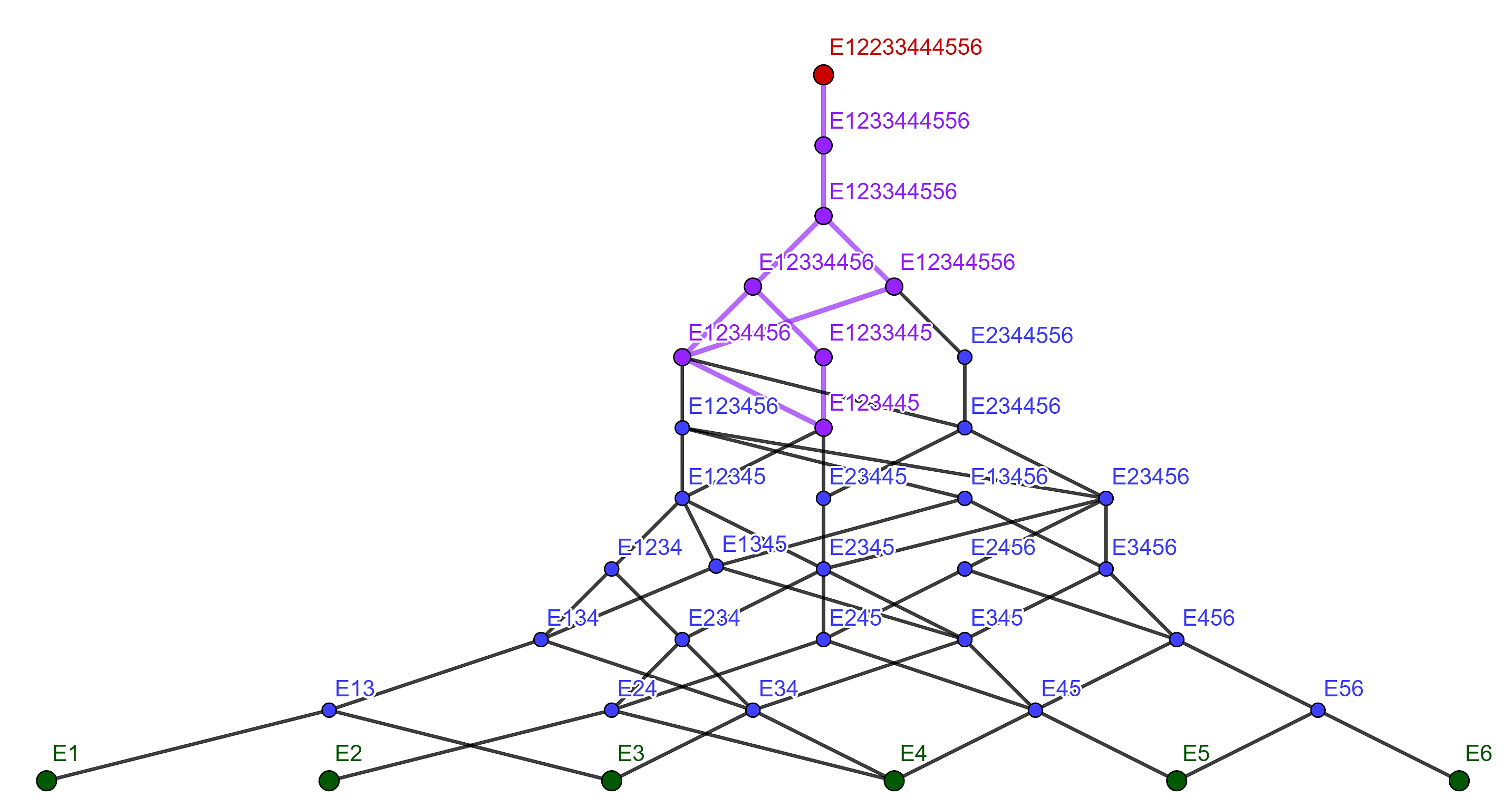}
\caption{Hasse Diagram of $(\Phi_{E_6}^+, \preceq)$}
\label{E6}
\end{figure}

\noindent \emph{An ideal $I$ of $\Phi_{X_n}^+$ is a connected graph in the Hasse diagram of $(\Phi_{X_n}^+, \preceq)$ containing its maximal element.} We compute the following Tutte polynomials with the formula of Crapo.

\begin{example}
The vector tuple $I_g = \big((3,1), (3,2)\big)$ is an ideal of $\Phi_{G_2}^+$, and the Tutte polynomial of its associated hyperplane arrangement is $T_{\mathcal{A}_{I_g}}(x,y) = x^2 + y^2 + 2x + 2y$.
\end{example}

\begin{example}
The vector tuple\\ $I_f = \big((1,1,1,1), (1,1,2,1), (1,2,2,1), (1,2,3,1), (1,2,3,2), (1,2,4,2), (1,3,4,2), (2,3,4,2)\big)$\\ is an ideal of $\Phi_{F_4}^+$, and the Tutte polynomial of its associated hyperplane arrangement is
\begin{align*}
T_{\mathcal{A}_{I_f}}(x,y) =\ & y^{12} + 4y^{11} + 10y^{10} + 20y^9 + 35y^8 + 2xy^6 + 56y^7 + 7xy^5 + 82y^6 + 19xy^4 + 111y^5 \\ & + x^4 + 5x^2y^2 + 45xy^3 + 137y^4 + 12x^3 + 25x^2y + 83xy^2 + 147y^3 \\ &
+ 48x^2 + 109xy + 125y^2 + 64x + 64y. 
\end{align*}
\end{example}

\begin{example}
The vector tuple
\begin{align*}
I_e =\ & \big((1,1,1,2,1,0), (1,1,1,2,1,1), (1,1,2,2,1,0), (1,1,2,2,1,1), (1,1,1,2,2,1), (1,1,2,2,2,1), \\
& (1,1,2,3,2,1), (1,2,2,3,2,1)\big)  
\end{align*}
is an ideal of $\Phi_{E_6}^+$, and the Tutte polynomial of its associated hyperplane arrangement is
\begin{align*}
T_{\mathcal{A}_{I_e}}(x,y) =\ & y^{22} + 6y^{21} + 21y^{20} + 56y^{19} + 126y^{18} + 252y^{17} + xy^{15} + 462y^{16} + 5xy^{14} + 791y^{15} \\
& + 18xy^{13} + 1281y^{14} + 52xy^{12} + 1978y^{13} + 129xy^{11} + 2927y^{12} + 295xy^{10} + 4163y^{11} \\
& + 5x^2y^8 + 623xy^9 + 5688y^{10} + 26x^2y^7 + 1212xy^8 + 7445y^9 \\
& + 110x^2y^6 + 2176xy^7 + 9288y^8 + 346x^2y^5 + 3596xy^6 + 10957y^7 \\
& + x^6 + 79x^3y^3 + 892x^2y^4 + 5404xy^5 + 12065y^6 \\
& + 22x^5 + 62x^4y + 303x^3y^2 + 1829x^2y^3 + 7235xy^4 + 12159y^5 \\
& + 191x^4 + 762x^3y + 2863x^2y^2 + 8292xy^3 + 10860y^4 \\
& + 818x^3 + 3184x^2y + 7646xy^2 + 8136y^3 + 1728x^2 + 4872xy + 4584y^2 + 1440x + 1440y. 
\end{align*}
\end{example}

\newpage

\bibliographystyle{abbrvnat}

\end{document}